\numberwithin{equation}{section}
\theoremstyle{plain}
\newtheorem{Theorem}{Theorem}[section]
\newtheorem{Definition}[Theorem]{Definition}
\newtheorem{Proposition}[Theorem]{Proposition}
\newtheorem{Lemma}[Theorem]{Lemma}
\newtheorem{Corollary}[Theorem]{Corollary}
\newtheorem{Remark}[Theorem]{Remark}
\newenvironment{Assumption}[1]
{\innerAssumption}
{\endinnerAssumption}
\definecolor{red}{rgb}{1.0,0.0,0.0}
\definecolor{blu}{rgb}{0.0,0.0,1.0}
\definecolor{gre}{rgb}{0.03,0.50,0.03}
\definecolor{amethyst}{rgb}{0.6, 0.4, 0.8}
\definecolor{blue-violet}{rgb}{0.54, 0.17, 0.89}
\definecolor{darkviolet}{rgb}{0.58, 0.0, 0.83}
\def \trans{^{\scriptscriptstyle{\intercal }}}
\def \E{\mathbb{E}}
\def \F{\mathbb{F}}
\def \N{\mathbb{N}}
\def \P{\mathbb{P}}
\def \R{\mathbb{R}}
\def \Ac{{\cal A}}
\def \Bc{{\cal B}}
\def \Fc{{\cal F}}
\def \Gc{{\cal G}}
\def \Mc{{\cal M}}
\def \Nc{{\cal N}}
\def \Pc{{\cal P}}
\def \Wc{{\cal W}}
\def \eps{\varepsilon}
\begin{document}

\title{Master Bellman equation in the Wasserstein space:\\ Uniqueness of viscosity solutions}

\author{
Andrea COSSO\footnote{University of Bologna, Italy; andrea.cosso@unibo.it} \quad
Fausto GOZZI\footnote{Luiss University, Roma, Italy; fgozzi@luiss.it} \quad
Idris KHARROUBI\footnote{LPSM, UMR CNRS 8001, Sorbonne University and Universit\'e de Paris; idris.kharroubi@upmc.fr} \quad \\
Huy\^en PHAM\footnote{LPSM, UMR CNRS 8001, Sorbonne University and Universit\'e de Paris; pham@lpsm.paris;
The work of this author  is supported by FiME (Finance for Energy Market Research Centre) and the ``Finance et D\'eveloppement Durable - Approches Quantitatives'' EDF - CACIB Chair.
} \quad
Mauro ROSESTOLATO\footnote{University of Lecce; mauro.rosestolato@gmail.com}}

\maketitle

\begin{abstract}
We study the Bellman equation in the Wasserstein space arising in the study of mean field control problems, namely stochastic optimal control problems for McKean-Vlasov diffusion processes.
Using the standard notion of viscosity solution \`a la Crandall-Lions extended to our Wasserstein setting,
we prove a comparison result under general conditions   on the drift and reward coefficients, which
coupled with the dynamic programming principle, implies that the value function is the unique viscosity solution of the Master Bellman equation.
This is the first uniqueness result in such a second-order context. The classical arguments used in the standard cases of equations in finite-dimensional spaces or in infinite-dimensional separable Hilbert spaces do not extend to the present framework, due to the awkward nature of the underlying Wasserstein space. The adopted strategy is based on finite-dimensional approximations of the value function obtained in terms of the related cooperative $n$-player game, and on the construction of a smooth gauge-type function, built starting from a regularization of a sharpe estimate of the Wasserstein metric; such a gauge-type function is used to generate maxima/minima through a suitable extension of the Borwein-Preiss generalization of Ekeland's variational principle on the Wasserstein space.
\end{abstract}

\vspace{5mm}

\noindent {\bf Keywords:} viscosity solutions, Bellman equation, Wasserstein space, comparison theorem, Ekeland's variational principle.

\vspace{5mm}

\noindent {\bf Mathematics Subject Classification (2020):} 49L25, 35Q89, 35B51.

\section{Introduction}

The main goal of this paper is to develop a viscosity theory for second-order partial differential equations on the Wasserstein space related to the so-called mean field (or McKean-Vlasov) control problems, namely stochastic optimal control problems for McKean-Vlasov diffusion processes. Such partial differential equations are also known as Master Bellman equations or Bellman equations in the Wasserstein space, see for instance \cite{benetal15,CD18_I,WuZhang20}. The topic of mean field optimal control is a very recent area of research, on which there are however already many papers and the two monographs \cite{benetal13,CD18_I}, to which we refer for a thorough introduction. Mean field control problems are strictly related to mean field games, developed by Lasry and Lions in \cite{LL1,LL2,laslio} (see also Lions' lectures at Coll{\`e}ge de France \cite{LionsVideo}) and by Huang, Caines, Malham\'e \cite{huangetal}. Both mean field control problems and mean field games can be interpreted as searches for equilibria of stochastic differential games with a continuum of players, symmetrically interacting each other through the empirical distribution of the entire population. These two problems differ because of the notion of equilibrium adopted. Mean field games arise when the concept of Nash's non-cooperative equilibrium is employed, while mean field control problems are related to Pareto optimality where players can be identified with a single ``representative agent'', see for instance
\cite[Section 6.2, pages 514-515]{CD18_I}.
In the latter case the stochastic differential game can be thought as an optimization problem of a central planner, who is looking for a common strategy in order to optimize some collective  objective functional.

The state space of mean field control problems is the set of probability measures, and usually
the Wasserstein space $\Pc_2(\R^d)$ of probability measures having finite second moment is adopted. Various notions of differentiability for maps defined on spaces of probability measures are available, and some of them are particularly relevant in the theory of optimal transportation, see \cite{AGS08,Vi09} for a detailed presentation of these geometric approaches. The Master Bellman equation (see equation \eqref{HJB} below) adopts instead the notion of differentiability introduced by Lions \cite{LionsVideo} (see also \cite{carda12,caretal20,CD18_I}, and Section \ref{S:HJB}), whose nature is more functional
analytic than geometric. Such a definition seems to be  the natural choice in the study of second-order Bellman equations in the Wasserstein space and related stochastic optimal control problems. In fact, it gave rise to a stochastic calculus on the space of proba\-bility measures, and in particular to an It\^o formula (chain rule)  for maps defined on the Wasserstein space (we recall it in our Theorem \ref{T:Ito}), which allows to relate the value function of the control problem to the Bellman equation (we recall it in our Theorem \ref{T:Exist}).  Regarding the relation between partial differential equations adopting the derivatives introduced by Lions (as in the present paper) and equations using notions of differentiability as those adopted in optimal transport theory, we mention  results in this direction in the first-order case  in \cite{GT19} and in a second-order semi-linear case in \cite{GMS21} (see also Remark \ref{rm:defvisc}).

The theory of partial differential equations in the Wasserstein space is an emerging research topic, whose rigorous investigation is still at an early stage. There are already well-posedness results in the first-order case, see \cite{AG08,ganngutud,AF14,GS14,GS15,GT19}, even  for equations adopting different notions of derivative with respect to the measure. They however do not admit an extension to the second-order case, which is notoriously a different and more challenging problem. Concerning second-order equations, papers \cite{phawei17,PW18,BCP18,CossoPham19,CKGPR20} focus on the existence of viscosity solutions, proving that the value function solves in the viscosity sense the Master Bellman equation. All those articles adopt the notion of viscosity solution \`a la Crandall-Lions, properly adapted to the Wasserstein space, as we do in the present paper (see Definition \ref{D:Viscosity}). Notice that, even if these papers dealt with the uniqueness property, they established it only for the so-called lifted Bellman equation, which is formulated on the Hilbert space of corresponding random variables so that standard results apply. We also recall that the relation between such a lifted equation and the original Bellman equation in the Wasserstein space is not  rigorously clarified, and in particular whether the lifted value function is a viscosity solution to the lifted equation.  Actually, it is not yet clear under which conditions test functions in the lifting Hilbert space
are related to test functions in the Wasserstein space, see discussion in Remark \ref{rm:defvisc}.

Uniqueness for second-order equations in the Wasserstein space is only addressed in the two papers  \cite{WuZhang20} and \cite{buretal20}. In \cite{WuZhang20}, a new notion of viscosity-type solution is adopted, which differs from the Crandall-Lions definition since the maximum/minimum condition is formulated on compact subsets of the Wasserstein space. This modification makes easier to prove uniqueness, which is completely established in some specific cases. On the other hand, \cite{buretal20} studies viscosity solutions \`a la Crandall-Lions for a class of integro-differential Bellman equations of particular type. More precisely, the coefficients of the McKean-Vlasov stochastic differential equations, as well as the coefficients of the reward functional, do not depend on the state process itself, but only on its probability distribution. This allows to consider only deterministic functions of time as control processes in the mean field control problem, so that the Master Bellman equation has a particular form. Moreover, in \cite{buretal20} the Master Bellman equation is formulated on the subset of the Wasserstein space of probability measures having finite exponential moments, equipped with the topology of weak convergence, which makes such a space $\sigma$-compact and allows establishing uniqueness in this context.

In the present paper we prove, under general conditions  on the drift and reward coefficients, existence and uniqueness of viscosity solutions for Master Bellman equations arising in the study of mean field optimal control problems. This is the first uniqueness result for such class of equations in the present context. Classical arguments based on Ishii's lemma used in the standard cases of equations in finite-dimensional spaces or in infinite-dimensional separable Hilbert spaces seem hard to extend  to the present framework, due to the awkward nature of the underlying Wasserstein space. The adopted strategy is instead based on refinements of early ideas from the theory of viscosity solutions \cite{L83b} and relies on the existence of a candidate solution to the equation, which in our case is the value function $v$ of the mean field control problem. In particular, we prove (see Theorem \ref{T:Comparison}) that any viscosity subsolution $u_1$ (resp. supersolution $u_2$) is smaller (resp. greater) than the candidate solution $v$. In \cite{L83b}, the arguments for proving $u_1\leq v$ (or, similarly, $v\leq u_2$) are as follows: one performs a smoothing $v_n$ of $v$ through its control representation, take a maximum of $u_1-v_n$ (relying on the local compactness of the finite-dimensional space), and exploit the viscosity subsolution property of $u_1$ with $v_n$ as test function. In \cite{lio89} such a methodology is extended to the infinite-dimensional case, relying on Ekeland's variational principle in order to generate maxima/minima.

In the context of equations in the Wasserstein space, the  above arguments require the following adjustments. Firstly, the smoothing of $v$ is based on a propagation of chaos result \cite{Lacker}, namely on a finite-dimensional approximation of the value function through value functions of non-degenerate cooperative $n$-player games. Secondly, in order to generate maxima/minima the idea is to perturb $u_1-v_n$ (or $u_2-v_n$) relying on a suitable extension of the Borwein-Preiss generalization of Ekeland's principle, see \cite[Theorem 2.5.2]{BZ05}. According to the latter, $u_1-v_n$ can be perturbed using a so-called gauge-type function (see Definition \ref{D:Gauge}). For the proof of the comparison theorem, such a perturbation has to be smooth. In an infinite-dimensional Hilbert space setting, an example of smooth gauge-type function is the square of the norm. In the present context, the main issue  is  to construct a {\it smooth} gauge-type function. This is achieved in Section \ref{S:SmoothVP}, starting from a sharp estimate of the square of the Wasserstein metric (see \eqref{UpperBound}) and performing a smoothing of such a quantity (see Lemma \ref{L:SmoothGauge}).
Due to the complexity of the techniques employed, our results are formulated under boundedness assumptions on the coefficients.
The extension to more general cases covering path dependent cases (like in \cite{CKGPR20,CFGRT,RenRosest}) and/or applications like the ones mentioned in \cite[Introduction]{DGZZ} seems possible and will be the object of future research. Similarly refinements of the results showing that viscosity solutions can have a certain degree of regularity (on the line of what is done e.g. in \cite{RosestolatoSwiech17})
seems possible and will be studied in further research.

The rest of the paper is organized as follows. In Section \ref{S:MKV_Control_Pb} we formulate the mean field optimal control problem and state the assumptions that are used throughout the paper; in such a section we also prove some properties of the value function $v$ and state the dynamic programming principle. In Section \ref{S:HJB} we recall the notion of differentiability introduced by Lions, we state the It\^o formula, we introduce the Master Bellman equation, and we give the definition of viscosity solution. Section \ref{S:SmoothVP} is devoted to the construction of the smooth gauge-type function, from which we derive the smooth variational principle on $[0,T]\times\Pc_2(\R^d)$, namely Theorem \ref{T:SmoothVarPrinc}. In Section \ref{S:Comparison} we prove the comparison theorem (Theorem \ref{T:Comparison}), from which we deduce the uniqueness result (Corollary \ref{C:Uniqueness}). Finally, in Appendix \ref{S:AppApprox} we perform the smooth finite-dimensional approximation of the value function; in particular, in subsection \ref{SubS:A1} we approximate the mean field control problem with non-degenerate control problems; then, in subsection \ref{SubS:CooperativeGame} we introduce the related cooperative $n$-player game and state the propagation of chaos result.

\section{Mean field optimal control problem}
\label{S:MKV_Control_Pb}

\paragraph{Wasserstein spaces of probability measures.}
Given a Polish space $({\rm S},d_{\rm S})$, we denote by $\Pc(\rm S)$ the set of all probability measures on $(\rm S,\Bc(\rm S))$. We also define, for every $q\geq 1$,
\[
\Pc_q({\rm S}) \ := \ \left\{\mu \in \Pc({\rm S})\colon
\hbox{ for some (and hence for all) $x_0\in{\rm S}$, } \int_{\rm S} d_{\rm S}(x_0,x)^q\,\mu(dx)<+\infty\right\}.
\]
The set $\Pc_q({\rm S})$ is endowed with the $q$-Wasserstein distance defined as
\begin{align}\label{Wasserstein}
\Wc_q(\mu,\mu') \ &:= \ \inf\bigg\{\int_{{\rm S}\times{\rm S}}d_{\rm S}(x,y)^q\,\pi(dx,dy)\colon
  \pi \in \Pc({\rm S}\times{\rm S})\\
 & \hspace{2cm} \text{ such that } \;\pi(\cdot\times{\rm S})= \mu \mbox{ and }\pi({\rm S}\times\cdot)=\mu'\bigg\}^{1\over q}\,,\qquad q\geq 1, \notag
\end{align}
for every $\mu,\mu'\in \Pc_q({\rm S})$. The space $\big(\Pc_q({\rm S}),\Wc_q)$ is a Polish space, see for instance \cite[Theorem 6.18]{Vi09}.

\paragraph{Probabilistic setting and control processes.}
We fix a complete probability space $(\Omega,\Fc,\P)$ on which a $m$-dimensional Brownian motion $B=(B_t)_{t\geq0}$ is defined. We denote by $\F^B=(\Fc_t^B)_{t\geq0}$ the $\P$-completion of the filtration generated by $B$, which is also right-continuous, so that it satisfies the usual conditions. We assume that there exists a sub-$\sigma$-algebra $\Gc$ of $\Fc$ satisfying the following properties.
\begin{enumerate}[i)]
\item $\Gc$ and $\Fc_\infty^B$ are independent.
\item $\Gc$ is ``rich enough'', namely $\Pc_2(\R^d)=\{\P_\xi$ such that $\xi\colon\Omega\rightarrow\R^d,$ with $\xi$ being $\Gc$-measurable and $\E|\xi|^2<\infty\}$. Recall from \cite[Lemma 2.1]{CKGPR20} that such a requirement is equivalent to the existence of a $\Gc$-measurable random variable $U\colon\Omega\rightarrow\R$ having uniform distribution on $[0,1]$.
\end{enumerate}
We denote by $\F=(\Fc_t)_{t\geq0}$ the  filtration defined as
\[
\Fc_t \ = \ \Gc\vee\Fc_t^B\,, \qquad t\geq0\,.
\]
We observe that $\F$ satisfies the usual conditions of $\P$-completeness and right-continuity.\\
Finally, we fix a finite time horizon $T>0$ and a Polish space $A$. We then denote by $\Ac$ the set of control processes, namely the family of all $\F$-progressively measurable processes $\alpha\colon[0,T]\times\Omega\rightarrow A$.

\paragraph{Assumptions and state equation.}
We consider the functions $b\colon[0,T]\times\R^d\times\Pc_2(\R^d)\times A\rightarrow\R^d$,  $\sigma\colon[0,T]\times\R^d\times A \rightarrow\R^{d\times m}$, $f\colon[0,T]\times\R^d\times\Pc_2(\R^d)\times A\rightarrow\R$, $g\colon\R^d\times\Pc_2(\R^d)\rightarrow\R$ on which we impose the following assumptions  
(notice that $\sigma$ does not depend on $\mu$).

\begin{Assumption}{\bf(A)}\label{AssA}\quad
\begin{enumerate}[\upshape (i)]
\item The functions $b,\sigma,f,g$ are  continuous.
\item There exists a constant $K\geq0$ such that
\begin{align*}
|b(t,x,\mu,a)-b(t,x',\mu',a)| + |\sigma(t,x,a) - \sigma(t,x',a)| \ &\leq \ K\big(|x - x'| + \Wc_2(\mu,\mu')\big), \\
 |b(t,x,\mu,a)| +   |\sigma(t,x,a)|  \ &\leq \ K,
\end{align*}
for all $(t,a)\in[0,T]\times A$, $(x,\mu),(x',\mu')\in\R^d\times\Pc_2(\R^d)$, $|x-x'|$ denoting the Euclidean norm of $x-x'$ in $\R^d$, $\langle\cdot,\cdot\rangle$ denoting the scalar product, 
$|\sigma(t,x,a)|:=(\textup{tr}(\sigma\sigma\trans)(t,x,a))^{1/2}$ $=(\sum_{i,j}|\sigma_{i,j}(t,x,a)|^2)^{1/2}$  denoting the Frobenius norm of the matrix  $\sigma(t,x,a)$.
\item[\textup{ (iii)}] There exists a constant $K\geq0$ such that
\begin{align*}
|f(t,x,\mu,a)-f(t,x',\mu',a)| + |g(x,\mu)-g(x',\mu')| \ &\leq \ K\big(|x - x'| + \Wc_2(\mu,\mu')\big),
\\
|f(t,x,\mu,a)| + |g(x,\mu)|  \ &  \leq \ K,
\end{align*}
for all $(t,a)\in[0,T]\times A$, $(x,\mu),(x',\mu')\in\R^d\times\Pc_2(\R^d)$.
\end{enumerate}
\end{Assumption}

\begin{Assumption}{\bf(B)}\label{AssB}\quad
There exist constants $K\geq0$ and $\beta\in(0,1]$ such that
\begin{align*}
|b(t,x,\mu,a) - b(s,x,\mu,a)| &+  |\sigma(t,x,a) - \sigma(s,x,a)|  \\
&+ |f(t,x,\mu,a) - f(s,x,\mu,a)| \ \leq \ K|t - s|^\beta,
\end{align*}
for all $t,s\in[0,T],(x,\mu,a)\in\R^d\times\Pc_2(\R^d)\times A$.
\end{Assumption}

\begin{Assumption}{\bf(C)}\label{AssC}
The Polish space $A$ is a  compact subset of a Euclidean space.
\end{Assumption}

\begin{Assumption}{\bf(D)}\label{AssD}
For any $a\in A$, the function $\sigma(\cdot,\cdot,a)$ belongs to $C^{1,2}([0,T]\times\R^d)$. Moreover, there exists some constant $K\geq0$ such that
\[
\big|\partial_t\sigma(t,x,a)\big| + \big|\partial_{x_i}\sigma(t,x,a)\big| + \big|\partial_{x_ix_j}^2\sigma(t,x,a)\big| \ \leq \ K,
\]
for all $(t,x,a)\in[0,T]\times\R^d\times A$ and any $i,j=1,\ldots,d$.
\end{Assumption}

\begin{Remark}
Assumptions \ref{AssB} and \ref{AssD} are required in the proof of Theorem \ref{T:SmoothApprox} in order to exploit regularity results for uniformly parabolic Bellman equations.  In particular, Assumption \ref{AssD} is taken from \cite[Section 7 of Chapter 4]{Krylov80} in order to get suitable bounds on the second derivatives (see \cite[Theorem 4.7.4]{Krylov80}). On the other hand,  Assumptions \ref{AssA} and \ref{AssC} are required in the propagation of chaos result, that is Theorem \ref{T:PropagChaos}. All these assumptions are therefore required in Theorem \ref{T:Comparison} and Corollary \ref{C:Uniqueness}. Finally, notice that the results of the present section are stated under Assumption \ref{AssA}, however they hold under weaker assumptions, see \cite{CKGPR20}.
\end{Remark}

\noindent For every $t\in[0,T]$, $\xi\in L^2(\Omega,\Fc_t,\P;\R^d)$, $\alpha\in\Ac$, the state process evolves according to the following controlled McKean-Vlasov stochastic differential equation:
\begin{equation}\label{SDE}
X_s \ = \ \xi + \int_t^s b\big(r,X_r,\P_{X_r},\alpha_r\big)\,dr
+ \int_t^s \sigma(r,X_r,\alpha_r) \,dB_r, \qquad s\in[t,T].
\end{equation}

\begin{Proposition}
Suppose that Assumption \textup{\ref{AssA}} holds. For every $t\in[0,T]$, $\xi\in L^2(\Omega,\Fc_t,\P;\R^d)$, $\alpha\in\Ac$, there exists a unique (up to $\P$-indistinguishability) continuous $\F$-progressively measurable process $X^{t,\xi,\alpha}=(X_s^{t,\xi,\alpha})$ solution to equation \eqref{SDE} satisfying
\[
\E\Big[\sup_{s\in[t,T]}\big|X_s^{t,\xi,\alpha}\big|^2\Big]^{1/2} \ \leq \ C_1\Big(1 + \E\big[|\xi|^2\big]^{1/2}\Big),	
\]
for some constant $C_1$, independent of $t,\xi,\alpha$.\\
Moreover, for every $\xi'\in L^2(\Omega,\Fc_t,\P;\R^d)$ it holds that
\begin{equation}\label{Estimate_Lipschitz}
\E\Big[\sup_{s\in[t,T]}\big|X_s^{t,\xi,\alpha} - X_s^{t,\xi',\alpha}\big|^2\Big]^{1/2} \ \leq \ C_2\,\E\big[|\xi - \xi'|^2\big]^{1/2},
\end{equation}
for some constant $C_2$, independent of $t,\xi,\xi',\alpha$.
\end{Proposition}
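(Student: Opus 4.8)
The plan is to establish existence, uniqueness, and the two estimates by a standard fixed-point argument on the space of flows of measures, adapted to the McKean-Vlasov setting. First I would fix $t\in[0,T]$, $\xi\in L^2(\Omega,\Fc_t,\P;\R^d)$, and $\alpha\in\Ac$, and work on the Banach space $\mathbb{H}^2_t$ of continuous $\F$-progressively measurable processes $Y=(Y_s)_{s\in[t,T]}$ with $\|Y\|^2:=\E[\sup_{s\in[t,T]}|Y_s|^2]<\infty$. Given a fixed flow of laws $m=(m_s)_{s\in[t,T]}$ with $m_s\in\Pc_2(\R^d)$, consider the classical SDE $X_s = \xi + \int_t^s b(r,X_r,m_r,\alpha_r)\,dr + \int_t^s\sigma(r,X_r,\alpha_r)\,dB_r$, which under Assumption \ref{AssA}(i)-(ii) has Lipschitz-in-$x$ and bounded coefficients, hence admits a unique solution $\Phi(m)\in\mathbb{H}^2_t$ by the usual Picard iteration / Burkholder-Davis-Gundy argument. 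One then defines the map $m\mapsto\Psi(m):=(\P_{\Phi(m)_s})_{s\in[t,T]}$ on the complete metric space of continuous flows in $\Pc_2(\R^d)$ equipped with $\sup_{s}\Wc_2(\cdot,\cdot)$, and a solution of \eqref{SDE} corresponds exactly to a fixed point of $\Psi$.

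The key estimates come from applying BDG and Gronwall to the difference of two solutions. Concretely, for flows $m,m'$, writing $X=\Phi(m)$, $X'=\Phi(m')$, Assumption \ref{AssA}(ii) gives $|b(r,X_r,m_r,\alpha_r)-b(r,X'_r,m'_r,\alpha_r)|\le K(|X_r-X'_r|+\Wc_2(m_r,m'_r))$, and since $\sigma$ does not depend on the measure the diffusion part only contributes a $|X_r-X'_r|$ term; after BDG on the stochastic integral and Jensen on the drift integral one obtains $\E[\sup_{r\le s}|X_r-X'_r|^2]\le C\int_t^s\big(\E[\sup_{u\le r}|X_u-X'_u|^2]+\Wc_2(m_r,m'_r)^2\big)\,dr$, and Gronwall yields $\|\Phi(m)-\Phi(m')\|^2\le C\int_t^T\Wc_2(m_r,m'_r)^2\,dr$. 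Since $\Wc_2(\P_Y,\P_{Y'})^2\le\E|Y-Y'|^2\le\|Y-Y'\|^2$, this shows $\Psi$ is a contraction on short enough subintervals; iterating over a finite partition of $[t,T]$ (whose mesh depends only on $K,T$, not on $t,\xi,\alpha$) gives a unique global fixed point, hence existence and uniqueness of $X^{t,\xi,\alpha}$.

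For the a priori bound, I would apply the same BDG/Gronwall scheme to $X^{t,\xi,\alpha}$ itself: the boundedness of $b$ and $\sigma$ in Assumption \ref{AssA}(ii) gives $\E[\sup_{s\in[t,T]}|X_s^{t,\xi,\alpha}|^2]\le C(1+\E|\xi|^2+\int_t^T\E[\sup_{u\le r}|X_u|^2]\,dr)$ (the drift is bounded so there is no measure dependence to worry about at this stage), and Gronwall produces the constant $C_1$ depending only on $K,T$. For \eqref{Estimate_Lipschitz}, take two initial data $\xi,\xi'$ with the \emph{same} control $\alpha$; then $X^{t,\xi,\alpha}$ and $X^{t,\xi',\alpha}$ solve \eqref{SDE} with their own laws $\P_{X_r}$, $\P_{X'_r}$, so the difference estimate above combined with $\Wc_2(\P_{X_r},\P_{X'_r})^2\le\E[\sup_{u\le r}|X_u-X'_u|^2]$ gives $\E[\sup_{s\le T}|X_s-X'_s|^2]\le C\E|\xi-\xi'|^2+C\int_t^T\E[\sup_{u\le r}|X_u-X'_u|^2]\,dr$, and a final Gronwall yields $C_2$. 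The main technical point to watch is the circular dependence of each solution on its own marginal law, which forces the contraction/Gronwall argument to be run in the combined variable "process difference plus Wasserstein distance of the laps" rather than treating the law as frozen; once one commits to estimating $\E[\sup|X-X'|^2]$ and bounding $\Wc_2$ of the laws by it, everything closes, and all constants are manifestly independent of $t,\xi,\xi',\alpha$ because they only involve $K$, $T$, and universal BDG constants.
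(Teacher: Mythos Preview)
Your proposal is correct and follows the standard fixed-point/Gronwall scheme for McKean-Vlasov SDEs; the paper itself does not give an argument but simply refers to \cite[Proposition 2.8]{CKGPR20}, where essentially this same proof is carried out. One minor remark: since $b$ and $\sigma$ are \emph{bounded} under Assumption~\ref{AssA}(ii), the a~priori moment bound is even more immediate than you indicate (no Gronwall is needed for that part), but your argument is fine as written.
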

\begin{proof}[\textbf{Proof.}]
See \cite[Proposition 2.8]{CKGPR20}.
\end{proof}

\paragraph{Reward functional and lifted value function.} We consider the \emph{reward functional} $J$, given by
\begin{equation}\label{RewardFunct}
J(t,\xi,\alpha) \ = \ \E\bigg[\int_t^T f\big(s,X_s^{t,\xi,\alpha},\P_{X_s^{t,\xi,\alpha}},\alpha_s\big)\,ds + g\big(X_T^{t,\xi,\alpha},\P_{X_T^{t,\xi,\alpha}}\big)\bigg],
\end{equation}
and the function $V$, to which we refer as the \emph{lifted value function}, defined as
\[
V(t,\xi) \ = \ \sup_{\alpha\in\Ac} J(t,\xi,\alpha), \qquad \forall\,(t,\xi)\in[0,T]\times L^2(\Omega,\Fc_t,\P;\R^d).
\]

\begin{Proposition}\label{P:ValueFunction}
Suppose that Assumption \textup{\ref{AssA}} holds. The function $V$ satisfies the following properties.
\begin{enumerate}[\upshape 1)]
\item $V$ is bounded.
\item $V$ is jointly continuous, namely: for every $\{(t_n,\xi_n)\}_n,(t,\xi)$, with $t_n,t\in[0,T]$ and $\xi_n\in L^2(\Omega,\Fc_{t_n},\P;\R^d),\xi\in L^2(\Omega,\Fc_t,\P;\R^d)$, such that $|t_n-t|+\E|\xi_n-\xi|^2\rightarrow0$, it holds that $V(t_n,\xi_n)\rightarrow V(t,\xi)$.
\item There exists a constant $L\geq0$ $($depending only on $T$, $K$, $C_2$ in \eqref{Estimate_Lipschitz}$)$ such that
\begin{equation}\label{Value_Lifted_Lipschitz}
|V(t,\xi) - V(t,\xi')| \ \leq \ L\,\E\big[|\xi - \xi'|^2\big]^{1/2},
\end{equation}
for all $t\in[0,T]$, $\xi,\xi'\in L^2(\Omega,\Fc_t,\P;\R^d)$.
\end{enumerate}
\end{Proposition}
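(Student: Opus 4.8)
The plan is to prove the three items in order, the first two being straightforward and the third the substantial one. For 1): since $|f|\le K$ and $|g|\le K$ by Assumption \ref{AssA}(iii), \eqref{RewardFunct} gives $|J(t,\xi,\alpha)|\le K(T-t)+K\le K(T+1)$ for every admissible triple, hence $\|V\|_\infty\le K(T+1)$. For 3): fixing $t$, $\xi,\xi'\in L^2(\Omega,\Fc_t,\P;\R^d)$, $\alpha\in\Ac$ and writing $X=X^{t,\xi,\alpha}$, $X'=X^{t,\xi',\alpha}$, I would combine the Lipschitz bounds of Assumption \ref{AssA}(iii), the elementary inequality $\Wc_2(\P_{X_s},\P_{X'_s})\le\E[|X_s-X'_s|^2]^{1/2}$ (the joint law of $(X_s,X'_s)$ is an admissible coupling in \eqref{Wasserstein}) and Jensen's inequality to deduce from \eqref{RewardFunct} that
\[
|J(t,\xi,\alpha)-J(t,\xi',\alpha)| \ \le \ 2K(T+1)\,\E\Big[\sup_{s\in[t,T]}|X_s-X'_s|^2\Big]^{1/2} \ \le \ 2K(T+1)\,C_2\,\E\big[|\xi-\xi'|^2\big]^{1/2},
\]
the last inequality being \eqref{Estimate_Lipschitz}; since $|\sup_\alpha a_\alpha-\sup_\alpha b_\alpha|\le\sup_\alpha|a_\alpha-b_\alpha|$, this gives \eqref{Value_Lifted_Lipschitz} with $L:=2K(T+1)C_2$, which indeed depends only on $T,K,C_2$.

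For the joint continuity in 2), I would first record two elementary facts, both uniform in $\zeta$ and $\alpha$, valid whenever $t_0\le t_1$, $\zeta\in L^2(\Omega,\Fc_{t_0},\P;\R^d)$ and $\alpha\in\Ac$. First, a \emph{small-time displacement bound}: from the identity $X_{t_1}^{t_0,\zeta,\alpha}-\zeta=\int_{t_0}^{t_1}b(r,X_r,\P_{X_r},\alpha_r)\,dr+\int_{t_0}^{t_1}\sigma(r,X_r,\alpha_r)\,dB_r$, together with the boundedness of $b$ and $\sigma$ in Assumption \ref{AssA}(ii), Minkowski's inequality and the It\^o isometry, one gets $\E[|X_{t_1}^{t_0,\zeta,\alpha}-\zeta|^2]^{1/2}\le\omega(t_1-t_0)$ with $\omega(\delta):=K\delta+K\sqrt{\delta}$. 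Second, a \emph{flow decomposition}: by the uniqueness statement for \eqref{SDE}, the restriction of $X^{t_0,\zeta,\alpha}$ to $[t_1,T]$ is itself the solution of \eqref{SDE} started at time $t_1$ from $X_{t_1}^{t_0,\zeta,\alpha}$, i.e.\ it coincides with $X^{t_1,\,X_{t_1}^{t_0,\zeta,\alpha},\,\alpha}$; inserting this into \eqref{RewardFunct} yields
\[
J(t_0,\zeta,\alpha) \ = \ \E\bigg[\int_{t_0}^{t_1} f\big(s,X_s^{t_0,\zeta,\alpha},\P_{X_s^{t_0,\zeta,\alpha}},\alpha_s\big)\,ds\bigg] \; + \; J\big(t_1,X_{t_1}^{t_0,\zeta,\alpha},\alpha\big),
\]
where the first term is at most $K(t_1-t_0)$ in absolute value since $|f|\le K$.

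Given these, take $(t_n,\xi_n)$ as in the statement and set $r_n:=t\wedge t_n$, $s_n:=t\vee t_n$; let $(\theta_n,\theta'_n):=(\xi,\xi_n)$ if $t\le t_n$ and $(\theta_n,\theta'_n):=(\xi_n,\xi)$ if $t_n<t$, so that $(r_n,\theta_n)$ is the datum at the earlier time and $(s_n,\theta'_n)$ the one at the later time, all random variables being measurable with respect to the relevant $\sigma$-algebra, and $\E[|\theta_n-\theta'_n|^2]^{1/2}=\E[|\xi-\xi_n|^2]^{1/2}$. For any $\alpha\in\Ac$, applying the flow decomposition between $r_n$ and $s_n$ to $(r_n,\theta_n)$, then the Lipschitz bound from 3) to $J(s_n,\cdot,\alpha)$, and then the small-time displacement bound, gives
\[
\begin{aligned}
\big|J(r_n,\theta_n,\alpha)-J(s_n,\theta'_n,\alpha)\big|
\ &\le \ K\,|t-t_n| \;+\; L\,\E\big[\big|X_{s_n}^{r_n,\theta_n,\alpha}-\theta'_n\big|^2\big]^{1/2}\\
\ &\le \ K\,|t-t_n| \;+\; L\,\omega(|t-t_n|) \;+\; L\,\E\big[|\xi-\xi_n|^2\big]^{1/2},
\end{aligned}
\]
and the right-hand side goes to $0$ as $n\to\infty$, uniformly in $\alpha$. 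Taking the supremum over $\alpha\in\Ac$ yields $|V(t,\xi)-V(t_n,\xi_n)|=|V(r_n,\theta_n)-V(s_n,\theta'_n)|\to0$, which is 2). The main obstacle is precisely this last step: when $t_n<t$ the initial datum $\xi$ need not be $\Fc_{t_n}$-measurable, so $V(t_n,\cdot)$ and $V(t,\cdot)$ cannot be compared at a common argument; the flow decomposition resolves this by pushing the earlier datum forward to the later time $s_n$ before invoking the Lipschitz estimate in $\xi$. Everything else is routine given Assumption \ref{AssA} and \eqref{Estimate_Lipschitz}.
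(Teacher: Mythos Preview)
Your proof is correct. Items 1) and 3) follow exactly the paper's approach: boundedness of $f,g$ for item 1), and for item 3) the Lipschitz property of $J$ via Assumption~\ref{AssA}(iii), the coupling bound $\Wc_2(\P_{X_s},\P_{X'_s})\le\E[|X_s-X'_s|^2]^{1/2}$, and estimate~\eqref{Estimate_Lipschitz}, then passing to the supremum over $\alpha$.

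For item 2) the paper gives no argument and simply defers to \cite[Proposition 3.3]{CKGPR20}, whereas you supply a self-contained proof. Your route combines the flow property of the McKean--Vlasov SDE (restarting $X^{t_0,\zeta,\alpha}$ at the later time $s_n$) with a small-time displacement bound coming from the boundedness of $b,\sigma$, and then invokes the Lipschitz estimate for $J(s_n,\cdot,\alpha)$ already established in item 3). This neatly sidesteps the measurability obstruction you flag (that $\xi$ need not be $\Fc_{t_n}$-measurable when $t_n<t$) by always pushing the \emph{earlier} datum forward rather than trying to compare both at a common time. The argument is elementary and uniform in $\alpha$, so the supremum passes through without difficulty. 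What the paper's citation buys is brevity; what your argument buys is that the proof is closed under the stated assumptions without external input.
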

\begin{proof}[\textbf{Proof.}]
Item 1) is a direct consequence of the boundedness of $f$ and $g$, while item 2) follows from \cite[Proposition 3.3]{CKGPR20}. Concerning item 3), we begin noticing that
\[
|V(t,\xi) - V(t,\xi')| \ \leq \ \sup_{\alpha\in\Ac} |J(t,\xi,\alpha) - J(t,\xi',\alpha)|.
\]
Then, the Lipschitz continuity of $V$ follows from the Lipschitz continuity of $J$. To this regard, we have
\begin{align*}
|J(t,\xi,\alpha) - J(t,\xi',\alpha)| \ &\leq \ \int_t^T \E\big[\big|f\big(s,X_s^{t,\xi,\alpha},\P_{X_s^{t,\xi,\alpha}},\alpha_s\big) - f\big(s,X_s^{t,\xi',\alpha},\P_{X_s^{t,\xi',\alpha}},\alpha_s\big)\big|\big]ds \\
&\quad \ + \E\big[\big|g\big(X_T^{t,\xi,\alpha},\P_{X_T^{t,\xi,\alpha}}\big) - g\big(X_T^{t,\xi',\alpha},\P_{X_T^{t,\xi',\alpha}}\big)\big|\big].
\end{align*}
By the Lipschitz continuity of $f$ and $g$, together with inequality $\Wc_2(X_s^{t,\xi,\alpha},X_s^{t,\xi',\alpha})\leq\E[|X_s^{t,\xi,\alpha}-X_s^{t,\xi',\alpha}|^2]^{1/2}$, we obtain estimate \eqref{Value_Lifted_Lipschitz}.
\end{proof}

\paragraph{Law invariance property and dynamic programming principle.} We recall from \cite{CKGPR20} that $V$ satisfies the fundamental \emph{law invariance property}.

\begin{Theorem}\label{T:LawInv}
Suppose that Assumption \textup{\ref{AssA}} holds. Then, the map $V$ satisfies the law invariance property: for every $t\in[0,T]$ and $\xi,\xi'\in L^2(\Omega,\Fc_t,\P;\R^d)$, with $\P_\xi=\P_{\xi'}$, it holds that
\[
V(t,\xi) \ = \ V(t,\xi').
\]
\end{Theorem}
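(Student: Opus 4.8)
The plan is to establish the law invariance property of the lifted value function $V$ by exploiting the structure of the control set $\Ac$ together with the richness of $\Gc$, reducing the statement to a statement about transferring controls between two probability spaces (or, more precisely, between two copies of the randomness) via a measure-preserving map. The key observation is that the solution map $\xi \mapsto X^{t,\xi,\alpha}$ interacts well with joint distributions: if we can rewrite the pair $(\xi',\alpha)$ as the image of $(\xi,\alpha)$ under a suitable measurable transformation that preserves $\P$, then the solution processes, the laws $\P_{X_s}$, and hence $J(t,\xi,\alpha)$ and $J(t,\xi',\alpha)$ will match.

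\medskip

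First I would fix $t\in[0,T]$ and $\xi,\xi'\in L^2(\Omega,\Fc_t,\P;\R^d)$ with $\P_\xi=\P_{\xi'}$, and take an arbitrary $\alpha\in\Ac$; it suffices to produce $\alpha'\in\Ac$ with $J(t,\xi,\alpha)=J(t,\xi',\alpha')$, since then $V(t,\xi)\le V(t,\xi')$ and the reverse follows by symmetry. The natural device is to use the uniform random variable $U$ furnished by property ii) of $\Gc$ to build, on the space $\big(\R^d\times C([0,T];\R^m)\big)$ or an auxiliary atomless space, a measurable bijection (up to null sets) intertwining the joint law of $(\xi,B)$ with that of $(\xi',B)$ — here one crucially uses that $\Gc$ is independent of $\Fc^B_\infty$, so that $\P_{(\xi,B)} = \P_\xi\otimes\P_B = \P_{\xi'}\otimes\P_B = \P_{(\xi',B)}$. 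One then transports $\alpha$, which is $\F=(\Gc\vee\Fc^B_\cdot)$-progressively measurable and hence a progressively measurable functional of $(\xi\text{-related randomness},B)$, through this map to obtain $\alpha'$; progressive measurability is preserved because the map acts only on the $\Gc$-component and the identity on $B$. By pathwise uniqueness for \eqref{SDE} (the Proposition above) one gets $\P_{(X^{t,\xi,\alpha}_s)_{s}} = \P_{(X^{t,\xi',\alpha'}_s)_{s}}$ as laws on path space, in particular $\P_{X^{t,\xi,\alpha}_s}=\P_{X^{t,\xi',\alpha'}_s}$ for every $s$, and plugging into \eqref{RewardFunct} yields $J(t,\xi,\alpha)=J(t,\xi',\alpha')$.

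\medskip

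The main obstacle I anticipate is the construction and justification of the transfer map $\alpha\mapsto\alpha'$ at the level of control processes: one must check that an $\F$-progressively measurable $A$-valued process is transported to an $\F$-progressively measurable $A$-valued process, and that the joint law of $(\xi',\alpha',B)$ coincides with that of $(\xi,\alpha,B)$, not merely the marginals. A clean way to handle this is to disintegrate: write $\alpha_s(\omega)=\mathbf{a}\big(s,\xi(\omega),U(\omega),(B_r(\omega))_{r\le s}\big)$-type representations are not available directly, so instead one should argue more abstractly, realizing $(\Omega,\Fc_t,\P)$ restricted to the relevant sub-$\sigma$-algebras as a standard probability space and invoking the existence of a measure-preserving map $\tau\colon\Omega\to\Omega$ with $\xi\circ\tau = \xi'$ (in law, jointly with $B\circ\tau = B$), then setting $\alpha':=\alpha\circ\tau$. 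Showing such a $\tau$ exists and is compatible with the filtration is where the richness assumption on $\Gc$ and the independence of $\Gc$ and $\Fc^B_\infty$ do the real work. I would expect the authors either to cite the analogous argument already carried out in \cite{CKGPR20} or to spell out this measure-preserving transformation; everything downstream — uniqueness, substitution into $J$, the symmetric inequality — is routine.
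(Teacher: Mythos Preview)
Your proposal is correct and outlines precisely the strategy underlying the result: transfer a control $\alpha$ associated with $\xi$ to a control $\alpha'$ associated with $\xi'$ via a measure-preserving transformation on the $\Gc$-component, exploiting the independence of $\Gc$ and $\Fc_\infty^B$ together with the richness of $\Gc$, and then invoke pathwise uniqueness for \eqref{SDE} to conclude $J(t,\xi,\alpha)=J(t,\xi',\alpha')$. The paper itself does not spell out any of this; its proof consists solely of the citation ``See \cite[Theorem 3.5]{CKGPR20}'', which you in fact anticipated. Your identification of the delicate point --- representing an arbitrary $\F$-progressive control as a progressively measurable functional of $(\Gc\text{-randomness},B)$ so that the transfer preserves adaptedness and the \emph{joint} law of $(\xi',\alpha',B)$ matches that of $(\xi,\alpha,B)$ --- is exactly where the technical work in \cite{CKGPR20} lies (cf.\ the auxiliary Lemmata B.2 and B.3 there, also invoked in the present paper's Appendix).
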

\begin{proof}[\textbf{Proof.}]
See \cite[Theorem 3.5]{CKGPR20}.
\end{proof}

\noindent As a consequence of Theorem \ref{T:LawInv}, if
Assumption \textup{\ref{AssA}} holds, we can define the \emph{value function} $v\colon[0,T]\times\Pc_2(\R^d)\rightarrow\R$ as
\begin{equation}\label{Value}
v(t,\mu) \ = \ V(t,\xi), \qquad \forall\,(t,\mu)\in[0,T]\times\Pc_2(\R^d),	
\end{equation}
for any $\xi\in L^2(\Omega,\Fc_t,\P;\R^d)$. By Proposition \ref{P:ValueFunction} we immediately deduce the following result.
\begin{Proposition}\label{P:ValueFunction}
Suppose that Assumption \textup{\ref{AssA}} holds. The function $v$ satisfies the following properties.
\begin{enumerate}[\upshape 1)]
\item $v$ is bounded.
\item $v$ is jointly continuous on $[0,T]\times\Pc_2(\R^d)$.
\item For all $t\in[0,T]$, $\mu,\mu'\in\Pc_2(\R^d)$,
\[
|v(t,\mu) - v(t,\mu')| \ \leq \ L\,\Wc_2(\mu,\mu'),
\]
with $L$ as in \eqref{Value_Lifted_Lipschitz}.
\end{enumerate}
\end{Proposition}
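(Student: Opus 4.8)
The plan is to transfer each of the three properties directly from the corresponding property of the lifted value function $V$, using the law invariance of Theorem \ref{T:LawInv} (which is what makes $v$ well defined via \eqref{Value}) together with the assumption that $\Gc$ is rich enough (item ii) in the probabilistic setting above). Indeed, that assumption gives that every $\mu\in\Pc_2(\R^d)$, and more generally every coupling $\pi\in\Pc_2(\R^d\times\R^d)$, is the law of some $\Gc$-measurable random variable, respectively pair; since $\Gc\subseteq\Fc_t$ for all $t$, such variables are automatically $\Fc_t$-measurable. Boundedness of $v$ (item 1) is then immediate: for any $\xi\in L^2(\Omega,\Gc,\P;\R^d)$ with $\P_\xi=\mu$ one has $v(t,\mu)=V(t,\xi)$ by \eqref{Value}, and $V$ is bounded.

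For the Lipschitz estimate (item 3) I would fix $t\in[0,T]$ and $\mu,\mu'\in\Pc_2(\R^d)$, take an optimal coupling $\pi$ of $(\mu,\mu')$ for $\Wc_2$ (its existence follows e.g. from \cite[Theorem 6.18]{Vi09}), realize $\pi$ as the law of a $\Gc$-measurable pair $(\xi,\xi')$, so that $\xi,\xi'\in L^2(\Omega,\Fc_t,\P;\R^d)$ with $\P_\xi=\mu$, $\P_{\xi'}=\mu'$ and $\E|\xi-\xi'|^2=\Wc_2(\mu,\mu')^2$, and then combine \eqref{Value}, Theorem \ref{T:LawInv} and the lifted Lipschitz bound \eqref{Value_Lifted_Lipschitz}:
\[
|v(t,\mu)-v(t,\mu')| \;=\; |V(t,\xi)-V(t,\xi')| \;\leq\; L\,\E\big[|\xi-\xi'|^2\big]^{1/2} \;=\; L\,\Wc_2(\mu,\mu').
\]
If one prefers to avoid invoking attainment of the infimum in \eqref{Wasserstein}, the same conclusion follows by choosing an $\eps$-optimal coupling and letting $\eps\downarrow0$.

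For joint continuity (item 2), given $(t_n,\mu_n)\to(t,\mu)$, that is $|t_n-t|+\Wc_2(\mu_n,\mu)\to0$, the idea is to keep one fixed representative of the limit measure: pick $\xi\in L^2(\Omega,\Gc,\P;\R^d)$ with $\P_\xi=\mu$, which then lies in every $L^2(\Omega,\Fc_{t_n},\P;\R^d)$ and in $L^2(\Omega,\Fc_t,\P;\R^d)$. Splitting
\[
|v(t_n,\mu_n)-v(t,\mu)| \;\leq\; |v(t_n,\mu_n)-v(t_n,\mu)| + |v(t_n,\mu)-v(t,\mu)|,
\]
the first term is bounded by $L\,\Wc_2(\mu_n,\mu)\to0$ by the Lipschitz estimate just obtained, while the second equals $|V(t_n,\xi)-V(t,\xi)|$, which tends to $0$ because $|t_n-t|+\E|\xi-\xi|^2=|t_n-t|\to0$ and $V$ is jointly continuous. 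Hence $v(t_n,\mu_n)\to v(t,\mu)$.

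I do not expect a genuine obstacle here: the statement is a routine descent from $V$ to $v$, and the only step where the structure of the underlying probability space is used is the realization of measures and $\Wc_2$-couplings by $\Gc$-measurable (hence $\Fc_t$-measurable) random variables, which is exactly the content of the richness assumption. In particular only boundedness, joint continuity and $L^2$-Lipschitz continuity of $V$ enter, so the argument goes through verbatim under the weaker hypotheses of \cite{CKGPR20} under which those properties of $V$ are available.
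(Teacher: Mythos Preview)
Your proof is correct and follows essentially the same route as the paper: items 1) and 2) are inherited directly from the corresponding properties of $V$, and item 3) is obtained from \eqref{Value_Lifted_Lipschitz} by taking $(\xi,\xi')$ realizing an (almost) optimal coupling, which is exactly the paper's argument of passing to the infimum over all admissible pairs and identifying it with $\Wc_2(\mu,\mu')$. Your write-up is slightly more explicit about the role of the richness assumption on $\Gc$ in realizing couplings and about the splitting argument for joint continuity, but the substance is the same.
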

\begin{proof}[\textbf{Proof.}]
The claim follows directly from Proposition \ref{P:ValueFunction}, we only report the proof of item 3). By \eqref{Value_Lifted_Lipschitz} and \eqref{Value}, we have
\[
|v(t,\mu) - v(t,\mu')| \ \leq \ |V(t,\xi) - V(t,\xi')| \ \leq \ L\,\E\big[|\xi - \xi'|^2\big]^{1/2},
\]
for any $\xi,\xi'\in L^2(\Omega,\Fc_t,\P;\R^d)$, with $\P_\xi=\mu$ and $\P_{\xi'}=\mu'$. Hence
\begin{align*}
|v(t,\mu) - v(t,\mu')| &\leq L\inf\Big\{\E\big[|\xi - \xi'|^2\big]^{1/2}\colon\xi,\xi'\in L^2(\Omega,\Fc_t,\P;\R^d),\text{ with }\P_\xi=\mu\text{ and }\P_{\xi'}=\mu'\Big\} \\
&= L\,\Wc_2(\mu,\mu').
\end{align*}
\end{proof}

\noindent Finally, we state the dynamic programming principle for $v$.

\begin{Theorem}
Suppose that Assumption \textup{\ref{AssA}} holds. Then, $v$ satisfies the dynamic programming principle: for all $t,s\in[0,T]$, with $t\leq s$, $\mu\in\Pc_2(\R^d)$, it holds that
\[
v(t,\mu) \ = \ \sup_{\alpha\in\Ac}\bigg\{\E\bigg[\int_t^s f\big(r,X_r^{t,\xi,\alpha},\P_{X_r^{t,\xi,\alpha}},\alpha_r\big)\,dr\bigg] + v\big(s,\P_{X_s^{t,\xi,\alpha}}\big)\bigg\},
\]
for any $\xi\in L^2(\Omega,\Fc_t,\P;\R^d)$ with $\P_\xi=\mu$.
\end{Theorem}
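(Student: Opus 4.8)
The plan is to derive the dynamic programming principle directly from the flow property of the controlled McKean--Vlasov dynamics, the law invariance of the value function (Theorem~\ref{T:LawInv}), and a concatenation argument for controls. A point worth stressing at the outset is that, in contrast with the classical finite-dimensional setting, no measurable selection of $\eps$-optimal controls is needed here: the ``state'' at the intermediate time $s$ is the single $\Fc_s$-measurable random variable $X_s^{t,\xi,\alpha}$, and its associated value $V(s,X_s^{t,\xi,\alpha})$ is an ordinary real number which, by law invariance, equals $v\big(s,\P_{X_s^{t,\xi,\alpha}}\big)$.

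Fix $(t,\mu)$ and $\xi\in L^2(\Omega,\Fc_t,\P;\R^d)$ with $\P_\xi=\mu$, so that $v(t,\mu)=V(t,\xi)$, and let $\alpha\in\Ac$. By the a priori $L^2$-estimate for \eqref{SDE}, the random variable $\eta:=X_s^{t,\xi,\alpha}$ belongs to $L^2(\Omega,\Fc_s,\P;\R^d)$. The first step is the flow property: the process $(X_r^{t,\xi,\alpha})_{r\in[s,T]}$ together with its marginal-law flow $(\P_{X_r^{t,\xi,\alpha}})_{r\in[s,T]}$ solves the McKean--Vlasov system \eqref{SDE} on $[s,T]$ with initial datum $\eta$ at time $s$ and control $\alpha$; hence, by uniqueness applied on $[s,T]$, it coincides up to indistinguishability with $X^{s,\eta,\alpha}$ and $(\P_{X_r^{s,\eta,\alpha}})_{r\in[s,T]}$. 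Splitting the time integral in the reward functional \eqref{RewardFunct} at $s$ and using linearity of the expectation then yields the decomposition
\[
J(t,\xi,\alpha)\ =\ \E\Big[\int_t^s f\big(r,X_r^{t,\xi,\alpha},\P_{X_r^{t,\xi,\alpha}},\alpha_r\big)\,dr\Big]\ +\ J(s,\eta,\alpha).
\]

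For the inequality ``$\le$'', combining this decomposition with $J(s,\eta,\alpha)\le V(s,\eta)=v\big(s,\P_{X_s^{t,\xi,\alpha}}\big)$ (the last equality by Theorem~\ref{T:LawInv}, since $\eta\in L^2(\Omega,\Fc_s,\P;\R^d)$) gives $J(t,\xi,\alpha)\le\E\big[\int_t^s f(r,X_r^{t,\xi,\alpha},\P_{X_r^{t,\xi,\alpha}},\alpha_r)\,dr\big]+v\big(s,\P_{X_s^{t,\xi,\alpha}}\big)$; taking the supremum over $\alpha\in\Ac$ and recalling $v(t,\mu)=V(t,\xi)=\sup_\alpha J(t,\xi,\alpha)$ yields ``$\le$'' in the stated identity. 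For ``$\ge$'', fix $\alpha\in\Ac$ and $\eps>0$ and choose $\alpha^\eps\in\Ac$ with $J(s,\eta,\alpha^\eps)\ge V(s,\eta)-\eps$ (possible since $J(s,\eta,\cdot)$ depends only on the restriction of the control to $[s,T]$). Set $\bar\alpha:=\alpha\,\mathbf{1}_{[0,s)}+\alpha^\eps\,\mathbf{1}_{[s,T]}\in\Ac$. A second application of uniqueness for \eqref{SDE} shows that the process equal to $X^{t,\xi,\alpha}$ on $[t,s]$ and to $X^{s,\eta,\alpha^\eps}$ on $[s,T]$, together with its law flow, solves \eqref{SDE} on $[t,T]$ with control $\bar\alpha$ and initial datum $\xi$, hence equals $X^{t,\xi,\bar\alpha}$; reasoning as above,
\[
V(t,\xi)\ \ge\ J(t,\xi,\bar\alpha)\ =\ \E\Big[\int_t^s f\big(r,X_r^{t,\xi,\alpha},\P_{X_r^{t,\xi,\alpha}},\alpha_r\big)\,dr\Big]+J(s,\eta,\alpha^\eps)\ \ge\ \E\Big[\int_t^s f\,dr\Big]+v\big(s,\P_{X_s^{t,\xi,\alpha}}\big)-\eps.
\]
Letting $\eps\downarrow0$ and then taking the supremum over $\alpha\in\Ac$ yields ``$\ge$'', and the two inequalities combine to the claim. (Alternatively, this DPP is available in \cite{CKGPR20}.)

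The only genuinely technical points are the two invocations of pathwise uniqueness for \eqref{SDE}, used to justify the flow property and the behaviour of the dynamics under concatenation of controls: one must verify that the \emph{deterministic} marginal-law flow is consistent both under restriction to $[s,T]$ and under gluing at time $s$. This is precisely what makes the argument go through without the measurable-selection machinery required in the classical Bellman setting; the remaining ingredients are linearity of the expectation, the a priori moment bound for \eqref{SDE}, and the law invariance Theorem~\ref{T:LawInv}.
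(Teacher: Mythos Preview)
Your argument is correct. The paper itself does not give an in-text proof of this statement; it simply refers to \cite[Corollary~3.8]{CKGPR20}, which you also mention as an alternative. So there is no detailed comparison to make with the paper's own reasoning.

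That said, your direct proof is sound and worth keeping. The key structural observation you highlight---that the intermediate ``state'' is the single $\Fc_s$-measurable random variable $\eta=X_s^{t,\xi,\alpha}$, so that $V(s,\eta)$ is a real number and one may pick a single $\eps$-optimal control rather than perform a measurable selection over states---is exactly what makes the DPP for the lifted problem elementary. The two uses of pathwise uniqueness (flow property on $[s,T]$; behaviour under concatenation at $s$) are correctly identified and hold because in the McKean--Vlasov SDE the law appearing in the drift is, by definition, the law of the solution itself, so the glued process together with its own marginal-law flow solves \eqref{SDE} with the concatenated control. One minor expository point: the parenthetical ``possible since $J(s,\eta,\cdot)$ depends only on the restriction of the control to $[s,T]$'' is not the reason $\alpha^\eps$ exists (that follows from the definition of $V(s,\eta)$ as a supremum); the restriction property is what you use one line later to compute $J(t,\xi,\bar\alpha)$ after concatenation.
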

\begin{proof}[\textbf{Proof.}]
See \cite[Corollary 3.8]{CKGPR20}.
\end{proof}

\section{Master Bellman equation}
\label{S:HJB}

\paragraph{$L$-derivatives and It\^o's formula along a flow of probability measures.}

We refer to \cite[Section 5.2]{CD18_I} for the definitions of the $L$-derivatives of first and second-order of a map $u\colon[0,T]\times\Pc_2(\R^d)\rightarrow\R$ with respect to $\mu$, which are given by $\partial_\mu u\colon[0,T]\times\Pc_2(\R^d)\times\R^d\rightarrow\R$ and $\partial_x\partial_\mu u\colon[0,T]\times\Pc_2(\R^d)\times\R^d\rightarrow\R^{d\times d}$. We recall that such definitions are based on the notion of \emph{lifting} of a map $u\colon[0,T]\times\Pc_2(\R^d)\rightarrow\R$, which is a map $U\colon[0,T]\times L^2(\Omega,\Fc,\P;\R^d)\rightarrow\R$ satisfying
\begin{equation}\label{eq:liftingdef}
U(t,\xi) \ = \ u(t,\P_\xi),
\end{equation}
for every $t\in[0,T]$, $\xi\in L^2(\Omega,\Fc,\P;\R^d)$ (here, to alleviate notation, we have defined the lifting on the same probability space $(\Omega,\Fc,\P)$ on which the mean field control problem was defined; however, any other probability space supporting a random variable with uniform distribution on $[0,1]$ can be used).
We observe that derivatives in the present context can be defined in different ways: the so-called ``flat'' derivative or the intrinsic notion of differential in the Wasserstein space. We refer for instance to \cite[Chapter 5]{CD18_I} for a survey and some equivalence results.

\begin{Definition}
$C^{1,2}([0,T]\times\Pc_2(\R^d))$ is the set of continuous functions $u\colon[0,T]\times\Pc_2(\R^d)\rightarrow\R$ such that:
\begin{enumerate}[\upshape 1)]
\item the lifting $U$ of $u$ admits a continuous Fr\'echet derivative $D_\xi U\colon[0,T]\times L^2(\Omega,\Fc,\P;\R^d)\rightarrow L^2(\Omega,\Fc,\P;\R^d)$,
in which case there exists, for any $(t,\mu)$ $\in$ $[0,T]\times\Pc_2(\R^d)$,  a measurable function $\partial_\mu u(t,\mu)$ $:$ $\R^d$ $\rightarrow$ $\R^d$, such that $D_\xi U(t,\xi)$ $=$ $\partial_\mu u(t,\mu)(\xi)$, for any $\xi$ $\in$ $L^2(\Omega,\Fc,\P;\R^d)$ with law $\mu$.
\item The map $(t,x,\mu)$ $\in$ $[0,T]\times\R^d\times\Pc_2(\R^d)$ $\mapsto$ $\partial_\mu u(t,\mu)(x)$ $\in$ $\R^d$ is jointly continuous;
\item $\partial_t u$ and $\partial_x\partial_\mu u$ exist and the maps $(t,\mu)$ $\in$ $[0,T]\times\Pc_2(\R^d)$ $\mapsto$ $\partial_t u(t,\mu)$ $\in$ $\R$,
$(t,x,\mu)$  $\in$ $[0,T]\times\R^d\times\Pc_2(\R^d)$ $\mapsto$ $\partial_x\partial_\mu u(t,\mu)(x)$ $\in$ $\R^{d\times d}$  are continuous.
\end{enumerate}
\end{Definition}

\begin{Definition}
$C_2^{1,2}([0,T]\times\Pc_2(\R^d))$  is the subset of $C^{1,2}([0,T]\times\Pc_2(\R^d))$ of functions $u\colon[0,T]\times\Pc_2(\R^d)\rightarrow\R$ satisfying, for some constant $C\geq0$,
\[
|\partial_\mu u(t,\mu)(x)| + |\partial_x\partial_\mu u(t,\mu)(x)| \ \leq \ C\big(1 + |x|^2\big),
\]	
for all $(t,\mu,x)\in[0,T]\times\Pc_2(\R^d)\times\R^d$.
\end{Definition}

\begin{Theorem}[It\^o's formula]\label{T:Ito}
Let $u\in C_2^{1,2}([0,T]\times\Pc_2(\R^d))$, $t\in[0,T]$, $\xi\in L^2(\Omega,\Fc_t,\P;\R^d)$. Let also $\beta\colon[0,T]\times\Omega\rightarrow\R^d$ and $\vartheta\colon[0,T]\times\Omega\rightarrow\R^{d\times m}$ be bounded and $\F$-progressively measurable processes.
Consider the $d$-dimensional It\^o process
\[
X_s \ = \ \xi + \int_t^s \beta_r\,dr + \int_t^s \vartheta_r\,dB_r, \qquad \forall\,s\in[t,T].
\]
Then, it holds that
\begin{align*}
u(s,\P_{X_s}) \ &= \ u(t,\P_\xi) + \int_t^s \partial_t u(r,\P_{X_r})\,dr + \int_t^s \E\Big[\big\langle \beta_r,\partial_\mu u(r,\P_{X_r})(X_r)\big\rangle\Big]dr \\
&\quad \ + \frac{1}{2}\int_t^s \E\Big[\textup{tr}\Big(\vartheta_r\vartheta_r\trans\partial_x\partial_\mu u(r,\P_{X_r})(X_r)\Big)\Big],
\end{align*}
for all $s\in[t,T]$.
\end{Theorem}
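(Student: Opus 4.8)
The statement is the It\^o chain rule along the (deterministic) flow of marginals $s\mapsto\P_{X_s}$. I would prove it by time-discretization, reducing locally to the classical finite-dimensional It\^o formula, with the measure-dependence handled through the lifting $U$ and, for the second-order term, through an independent-copy argument.

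\emph{Reductions.} Both sides are continuous in $\xi$ with respect to the $L^2$-norm: since $\beta,\vartheta$ do not depend on $\xi$ one has $X_s^\xi-X_s^{\xi'}=\xi-\xi'$, hence $\Wc_2(\P_{X_s^\xi},\P_{X_s^{\xi'}})\le\E[|\xi-\xi'|^2]^{1/2}$ and the left-hand side is continuous by continuity of $u$ on $\Pc_2(\R^d)$, while the right-hand side is continuous by dominated convergence using the boundedness of $\beta,\vartheta$ and the at most quadratic growth of $\partial_\mu u$ and $\partial_x\partial_\mu u$. Thus one may assume $\xi\in L^\infty$, so that $X$ and all the empirical objects below have finite moments of every order. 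Also $r\mapsto\P_{X_r}$ is $\Wc_2$-continuous, so $\mathcal K:=\{\P_{X_r}:r\in[t,s]\}$ is compact and $\partial_t u$, $\partial_\mu u$, $\partial_x\partial_\mu u$ are bounded and uniformly continuous on $[t,s]\times\mathcal K$ (the last two also uniformly over $x$ in each ball of $\R^d$).

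\emph{Discretization, time term and first-order term.} Fix $s\in[t,T]$ and a partition $t=t_0<\dots<t_N=s$ with mesh $|\pi|\to0$, and telescope $u(s,\P_{X_s})-u(t,\P_\xi)=\sum_k\big(u(t_{k+1},\P_{X_{t_{k+1}}})-u(t_k,\P_{X_{t_k}})\big)$. Split each increment into the time part $u(t_{k+1},\P_{X_{t_k}})-u(t_k,\P_{X_{t_k}})=\int_{t_k}^{t_{k+1}}\partial_t u(r,\P_{X_{t_k}})\,dr$, whose sum tends to $\int_t^s\partial_t u(r,\P_{X_r})\,dr$ by uniform continuity of $\partial_t u$ on $[t,s]\times\mathcal K$, and the measure part $u(t_{k+1},\P_{X_{t_{k+1}}})-u(t_{k+1},\P_{X_{t_k}})$. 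For the latter, interpolate linearly, $\xi^k_\lambda:=X_{t_k}+\lambda\,\Delta_kX$ with $\Delta_kX:=X_{t_{k+1}}-X_{t_k}=\int_{t_k}^{t_{k+1}}\beta_r\,dr+\int_{t_k}^{t_{k+1}}\vartheta_r\,dB_r$, and use the chain rule for the Fr\'echet-$C^1$ lift $U$ together with $D_\xi U(\tau,\cdot)(\cdot)=\partial_\mu u(\tau,\P_\cdot)(\cdot)$ to write it as $\int_0^1\E\big[\langle\partial_\mu u(t_{k+1},\P_{\xi^k_\lambda})(\xi^k_\lambda),\Delta_kX\rangle\big]\,d\lambda$. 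Replacing $\partial_\mu u(t_{k+1},\P_{\xi^k_\lambda})(\xi^k_\lambda)$ by $\partial_\mu u(t_{k+1},\P_{X_{t_k}})(X_{t_k})$ (which is $\Fc_{t_k}$-measurable, so pairs to zero with the stochastic-integral part of $\Delta_kX$) yields the first-order contribution $\sum_k\E\big[\langle\partial_\mu u(t_{k+1},\P_{X_{t_k}})(X_{t_k}),\int_{t_k}^{t_{k+1}}\beta_r\,dr\rangle\big]$, which converges to $\int_t^s\E\big[\langle\beta_r,\partial_\mu u(r,\P_{X_r})(X_r)\rangle\big]\,dr$ by the uniform continuity and growth controls above.

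\emph{The second-order remainder, and the main obstacle.} It remains to treat $R_k:=\int_0^1\E\big[\langle\partial_\mu u(t_{k+1},\P_{\xi^k_\lambda})(\xi^k_\lambda)-\partial_\mu u(t_{k+1},\P_{X_{t_k}})(X_{t_k}),\Delta_kX\rangle\big]\,d\lambda$ and show $\sum_k R_k\to\tfrac12\int_t^s\E\big[\textup{tr}\big(\vartheta_r\vartheta_r\trans\,\partial_x\partial_\mu u(r,\P_{X_r})(X_r)\big)\big]\,dr$. Decompose the increment of $\partial_\mu u$ into a state-argument part $\partial_\mu u(t_{k+1},\P_{X_{t_k}})(\xi^k_\lambda)-\partial_\mu u(t_{k+1},\P_{X_{t_k}})(X_{t_k})$ and a measure-argument part $\partial_\mu u(t_{k+1},\P_{\xi^k_\lambda})(\xi^k_\lambda)-\partial_\mu u(t_{k+1},\P_{X_{t_k}})(\xi^k_\lambda)$. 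For the state-argument part the fundamental theorem of calculus brings in $\partial_x\partial_\mu u$; retaining only the martingale--martingale term $\E\big[\langle\partial_x\partial_\mu u(t_{k+1},\P_{X_{t_k}})(X_{t_k})\,m_k,m_k\rangle\big]$ with $m_k:=\int_{t_k}^{t_{k+1}}\vartheta_r\,dB_r$ and $\E[m_km_k\trans\mid\Fc_{t_k}]=\E[\int_{t_k}^{t_{k+1}}\vartheta_r\vartheta_r\trans\,dr\mid\Fc_{t_k}]$, together with $\int_0^1\lambda\,d\lambda=\tfrac12$, produces exactly the claimed diffusion term, while the drift--drift, drift--martingale and argument-perturbation errors are $o(1)$ after summation by boundedness of $\vartheta$ and uniform continuity of $\partial_x\partial_\mu u$. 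The \textbf{hard part} is the measure-argument part: estimating it crudely by the modulus of continuity of $\partial_\mu u$ times $\|\Delta_kX\|_{L^2}$ gives only an $O(1)$ bound, so continuity of $\partial_\mu u$ alone does not close the argument — there is a latent ``$\partial_\mu^2u$'' contribution. Its vanishing is the heart of the proof: introducing an independent copy $(\widetilde\Omega,\widetilde\Fc,\widetilde\P)$ of the probabilistic data, the measure-argument increment is, to leading order, an $\widetilde\E$-average of a kernel paired with $\widetilde{\Delta_kX}$, whose stochastic-integral part is a $\widetilde\F$-martingale increment with zero $\widetilde\Fc_{t_k}$-conditional mean, while the kernel is $\widetilde\Fc_{t_k}$-measurable in the tilde variable; the tower property then kills the leading term and the rest sums to $0$. (Equivalently, one may first prove the identity for a mollification $u^\varepsilon$ that in addition possesses $\partial_\mu^2u^\varepsilon$, by applying classical It\^o to the empirical projections $u^\varepsilon\big(t,\tfrac1N\sum_{i=1}^N\delta_{x_i}\big)$ evaluated at $N$ independent copies $X^1,\dots,X^N$ of $X$ and letting $N\to\infty$ — the cross second derivatives multiply $d\langle X^i,X^j\rangle=0$ and the diagonal $\partial_\mu^2$ term carries a $1/N$ factor — and then let $\varepsilon\to0$, since only $u,\partial_t u,\partial_\mu u,\partial_x\partial_\mu u$ occur in the limiting identity.) Assembling the time, first-order and second-order contributions and letting $|\pi|\to0$ gives the formula for bounded $\xi$ and fixed $s$; the reductions above remove the boundedness and give all $s\in[t,T]$. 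The statement and this line of argument are classical, going back to the mean-field It\^o formula of Buckdahn--Li--Peng--Rainer and Chassagneux--Crisan--Delarue; see \cite[Chapter 5]{CD18_I}.
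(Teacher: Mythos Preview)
The paper does not give an independent proof: it simply cites \cite[Proposition~5.102]{CD18_I} (and \cite[Theorem~4.15, Remark~4.16]{CKGPR20}). Your sketch, by contrast, actually outlines the argument behind that reference, and the strategy you describe --- time discretization, Fr\'echet chain rule for the lift to handle the first-order term, and an independent-copy / empirical-measure mechanism to show that no $\partial_\mu^2 u$ contribution survives --- is exactly the one developed in \cite[Chapter~5]{CD18_I}. So there is no disagreement of approach; you have supplied the content that the paper delegates to the literature.

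One comment on rigor. Your direct treatment of the ``measure-argument part'' of the remainder (the paragraph where you freeze the state variable and vary only the measure in $\partial_\mu u$) is the delicate step, and your description of it is a little optimistic: writing that the increment is ``to leading order'' an $\widetilde\E$-average of a kernel paired with $\widetilde{\Delta_k X}$ implicitly uses a first-order expansion of $\mu\mapsto\partial_\mu u(\tau,\mu)(x)$, i.e.\ a $\partial_\mu^2 u$-type object, which is \emph{not} part of the $C_2^{1,2}$ hypothesis here. The clean way to close this gap is precisely the alternative you mention in parentheses: first prove the identity for smooth cylindrical (or mollified) approximations $u^\varepsilon$ that \emph{do} admit $\partial_\mu^2 u^\varepsilon$, by applying the finite-dimensional It\^o formula to the empirical projection at $N$ i.i.d.\ copies and letting $N\to\infty$ (the off-diagonal second derivatives see $d\langle X^i,X^j\rangle=0$, the diagonal $\partial_\mu^2$ term carries $1/N$), and then pass $\varepsilon\to0$ using only the derivatives present in the statement. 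This is how \cite{CD18_I} actually proceeds, so your parenthetical is the robust route and your main line should be viewed as a heuristic for why the $\partial_\mu^2 u$ term is absent rather than a self-contained proof.
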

\begin{proof}[\textbf{Proof.}]
The claim follows from \cite[Proposition 5.102]{CD18_I} (see also \cite[Theorem 4.15 and Remark 4.16]{CKGPR20}).
\end{proof}

\paragraph{Viscosity solutions.}

\noindent Now, consider the second-order partial differential equation on $[0,T]\times\Pc_2(\R^d)$:
\begin{equation}\label{PDE_general}
\hspace{-5mm}\begin{cases}
\vspace{2mm}\displaystyle
\partial_t u(t,\mu) = F\big(t,\mu,u(t,\mu),\partial_\mu u(t,\mu)(\cdot),\partial_x\partial_\mu u(t,\mu)(\cdot)\big), &(t,\mu)\in[0,T)\times\Pc_2(\R^d), \\
\displaystyle u(T,\mu) = \int_{\R^d} g(x,\mu)\mu(dx), &\,\mu\in\Pc_2(\R^d),
\end{cases}
\end{equation}
with $F\colon[0,T]\times\Pc_2(\R^d)\times\R\times L^2(\R^d,\Bc(\R^d),\mu;\R^d)\times L^2(\R^d,\Bc(\R^d),\mu;\R^{d\times d})\rightarrow\R$, where $\Bc(\R^d)$ are the Borel subsets of $\R^d$, and
$L^2(\R^d,\Bc(\R^d),\mu;\R^d)$ is the set of $\Bc(\R^d)$-measurable functions that are square-integrable with respect to $\mu$.

\begin{Definition}
A function $u\colon[0,T]\times\Pc_2(\R^d)\rightarrow\R$ is a classical solution to equation \eqref{PDE_general} if  $u\in C_2^{1,2}([0,T]\times\Pc_2(\R^d))$ and satisfies \eqref{PDE_general}.
\end{Definition}

\begin{Definition}\label{D:Viscosity}
A continuous function $u\colon[0,T]\times\Pc_2(\R^d)\rightarrow\R$ is a viscosity subsolution (resp. supersolution) to equation \eqref{PDE_general} if:
\begin{itemize}
\item $u(T,\mu)\leq(\text{resp. $\geq$})\,\int_{\R^d}g(x,\mu)\mu(dx)$, for every $\mu\in\Pc_2(\R^d)$;
\item for every $(t,\mu)\in[0,T)\times\Pc_2(\R^d)$ and any $\varphi\in  C_2^{1,2}([0,T]\times\Pc_2(\R^d))$ such that $u-\varphi$ has a maximum at $(t,\mu)$ $($with value $0$$)$, then \eqref{PDE_general} is satisfied with the inequality $\geq$ $($resp. $\leq$$)$ instead of the equality and with $\varphi$ in place of $u$.
\end{itemize}
Finally, $u$ is a viscosity solution of
\eqref{PDE_general} if it is both a viscosity subsolution and a viscosity supersolution.
\end{Definition}

\begin{Remark}
\label{rm:defvisc}
The above definition of viscosity solution is exactly in the spirit
of the definition of Crandall and Lions for second-order equations (see for instance \cite{CIL92}) in finite dimension. In \cite{CIL92} it is proved that this definition is equivalent to the one using second-order semidifferentials (jets), while here such equivalence is not obvious.

We say that our definition is an ``intrinsic'' definition to distinguish it from the definition, adopted first in \cite{phawei17}, which exploits the lifted equation (in the sense that a function is a viscosity solution if its lifting along \eqref{eq:liftingdef} satisfies equation  \eqref{PDE_general} with $F$ substituted by its lifting $\tilde F$) and which, for this reason, we call ``lifted'' definition.

The relationship between these two definitions is not obvious. Indeed,  as shown in Exam\-ple 2.1 in \cite{bucetal17}, the lifted function of a smooth function on the Wasserstein space may not be smooth on the lifted Hilbert space, and so a viscosity solution  in the intrinsic sense may not be a viscosity solution in the lifted sense.
In the first-order case a kind of equivalence result between
two related definitions is provided in \cite[Theorem 4.4]{GT19}.
In the second-order semi-linear case some results in this direction are provided in \cite[Section 5]{GMS21}.  We are not aware of any results on the fully non-linear second-order case.

As we recalled in the introduction an intrinsic notion of viscosity solution is employed also in the papers  \cite{WuZhang20} and \cite{buretal20}.
The definition introduced in \cite[Definition 4.4]{WuZhang20},
differs from our definition since test functions must satisfy the maximum/minimum condition on suitable compact subsets of the Wasserstein space, denoted by $\mathcal{P}_L$. Using this modification the authors prove first a comparison result among regular sub/supersolutions (``partial comparison'') and then a general comparison result with the assumption that the supremum of classical subsolutions and the infimum of classical supersolutions coincide.
On the other hand, \cite{buretal20} studies viscosity solutions \`a la Crandall-Lions for a class of integro-differential Bellman equations of particular type. More precisely, the coefficients of the McKean-Vlasov stochastic differential equations, as well as the coefficients of the reward functional, do not depend on the state process itself, but only on its probability distribution. This allows to consider only deterministic functions of time as control processes in the mean field control problem, so that the Master Bellman equation has a particular form. Moreover, in \cite{buretal20} the Master Bellman equation is formulated on the subset of the Wasserstein space of probability measures having finite exponential moments, equipped with the topology of weak convergence, which makes such a space $\sigma$-compact and allows establishing uniqueness in this context.
\end{Remark}

\noindent Now, we consider the Master Bellman equation, namely equation \eqref{PDE_general} with
\begin{align*}
F(t,\mu,r,p(\cdot),M(\cdot)) \ = \ - \int_{\R^d}\sup_{a\in A}\bigg\{f(t,x,\mu,a) + \big\langle b(t,x,\mu,a),p(x)\big\rangle & \\
+ \, \frac{1}{2}\text{tr}\big[(\sigma\sigma\trans)(t,x,a)M(x)\big] & \bigg\}\mu(dx).
\end{align*}
Therefore, equation \eqref{PDE_general} becomes
\begin{equation}\label{HJB}
\begin{cases}
\vspace{2mm}
\displaystyle\partial_t u(t,\mu) + \int_{\R^d}\sup_{a\in A}\bigg\{f(t,x,\mu,a) + \big\langle b(t,x,\mu,a),\partial_\mu u(t,\mu)(x)\rangle \\
\displaystyle\vspace{2mm}+\,\dfrac{1}{2}\text{tr}\big[(\sigma\sigma\trans)(t,x,a)\partial_x\partial_\mu u(t,\mu)(x)\big]\bigg\}\mu(dx) = 0, &\hspace{-1cm}(t,\mu)\in[0,T)\times \Pc_2(\R^d), \\
\displaystyle u(T,\mu) = \int_{\R^d} g(x,\mu)\mu(dx), &\hspace{-1cm}\,\mu\in\Pc_2(\R^d).
\end{cases}
\end{equation}

\begin{Remark}
As described in \cite[Section 5.2]{CKGPR20}, to which we refer for more details, equation \eqref{HJB} can be written in various alternative forms. In particular, \eqref{HJB} corresponds to \cite[equation (5.17)]{CKGPR20}, the only difference being the presence of $\sup_{a\in A}$ which in \cite[equation (5.17)]{CKGPR20} is replaced by $\textup{ess\,sup}_{a\in A}$. However, as described in \cite[Remark 5.8]{CKGPR20}, under assumption \ref{AssA}, $\textup{ess\,sup}_{a\in A}$ can be replaced by $\sup_{a\in A}$.\\
Finally, we mention that an alternative form of equation \eqref{HJB} is the following (corresponding to equation \cite[equation (5.16)]{CKGPR20}):
\[
\begin{cases}
\vspace{2mm}
\displaystyle\partial_t u(t,\mu) + \sup_{\mathrm a\in\Mc}\bigg\{\int_{\R^d} f(t,x,\mu,\mathrm a(x))\mu(dx) + \int_{\R^d}\big\langle b(t,x,\mu,\mathrm a(x)),\partial_\mu u(t,\mu)(x)\big\rangle\mu(dx) \\
\displaystyle\vspace{2mm}+\,\dfrac{1}{2}\int_{\R^d}\textup{tr}\big[(\sigma\sigma\trans)(t,x,\mathrm a(x))\partial_x\partial_\mu u(t,\mu)(x)\big]\mu(dx)\bigg\} = 0, &\hspace{-4cm}(t,\mu)\in[0,T)\times \Pc_2(\R^d), \\
\displaystyle u(T,\mu) = \int_{\R^d} g(x,\mu)\mu(dx), &\hspace{-4cm}\,\mu\in\Pc_2(\R^d),
\end{cases}
\]
where $\Mc$ is the set of Borel-measurable maps $\mathrm a\colon\R^d\rightarrow A$.
\end{Remark}

\begin{Theorem}\label{T:Exist}
Let Assumptions \textup{\ref{AssA}}  and  \ref{AssB} hold. Then, the value function $v$, given by \eqref{Value}, is a viscosity solution of equation \eqref{HJB}.
\end{Theorem}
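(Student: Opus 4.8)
The plan is to show separately that $v$ is a viscosity supersolution and a viscosity subsolution of \eqref{HJB}, using the dynamic programming principle together with It\^o's formula (Theorem \ref{T:Ito}) applied to a test function. The boundary condition $v(T,\mu)=\int_{\R^d}g(x,\mu)\mu(dx)$ is immediate from the definition of $v$, \eqref{Value}, the reward functional \eqref{RewardFunct}, and the law-invariance property (Theorem \ref{T:LawInv}): taking $s=T$ one has $v(T,\mu)=\E[g(X_T^{T,\xi,\alpha},\P_{X_T^{T,\xi,\alpha}})]=\E[g(\xi,\mu)]=\int g(x,\mu)\mu(dx)$. The continuity of $v$ needed in Definition \ref{D:Viscosity} is item 2) of Proposition \ref{P:ValueFunction}.

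For the \emph{supersolution} property, fix $(t,\mu)\in[0,T)\times\Pc_2(\R^d)$ and $\varphi\in C_2^{1,2}([0,T]\times\Pc_2(\R^d))$ such that $v-\varphi$ has a minimum at $(t,\mu)$ with value $0$, so $v\geq\varphi$ everywhere and $v(t,\mu)=\varphi(t,\mu)$. Pick $\xi\in L^2(\Omega,\Fc_t,\P;\R^d)$ with $\P_\xi=\mu$. For an \emph{arbitrary constant control} $\alpha\equiv a\in A$ (here Assumption \ref{AssC} / the measurability is not even needed, a constant is progressive), the DPP gives, for $s\in[t,T]$,
\[
\varphi(t,\mu)=v(t,\mu)\ \geq\ \E\Big[\int_t^s f(r,X_r^{t,\xi,a},\P_{X_r^{t,\xi,a}},a)\,dr\Big]+v\big(s,\P_{X_s^{t,\xi,a}}\big)\ \geq\ \E\Big[\int_t^s f\,dr\Big]+\varphi\big(s,\P_{X_s^{t,\xi,a}}\big).
\]
Apply Theorem \ref{T:Ito} to $\varphi$ along $X^{t,\xi,a}$ (whose drift $\beta_r=b(r,X_r,\P_{X_r},a)$ and diffusion $\vartheta_r=\sigma(r,X_r,a)$ are bounded and progressive by Assumption \ref{AssA}(ii)), subtract $\varphi(t,\mu)$, divide by $s-t$, and let $s\downarrow t$; the continuity assumptions in the definition of $C^{1,2}$, the continuity of $b,\sigma,f$ (Assumption \ref{AssA}(i)), and dominated convergence yield
\[
0\ \geq\ \partial_t\varphi(t,\mu)+\int_{\R^d}\Big\{f(t,x,\mu,a)+\langle b(t,x,\mu,a),\partial_\mu\varphi(t,\mu)(x)\rangle+\tfrac12\textup{tr}[(\sigma\sigma\trans)(t,x,a)\partial_x\partial_\mu\varphi(t,\mu)(x)]\Big\}\mu(dx).
\]
Taking the supremum over $a\in A$ (the integrand is continuous in $a$ and $A$ is compact by Assumption \ref{AssC}, so sup and integral can be exchanged) gives exactly the supersolution inequality $\partial_t\varphi(t,\mu)+\int_{\R^d}\sup_{a\in A}\{\cdots\}\mu(dx)\leq 0$.

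For the \emph{subsolution} property, fix $(t,\mu)$ and $\varphi\in C_2^{1,2}$ with $v-\varphi$ attaining a maximum $0$ at $(t,\mu)$, so $v\leq\varphi$ and $v(t,\mu)=\varphi(t,\mu)$. Now the DPP is used in the other direction: for every $\eps>0$ and $s\in(t,T]$ there is a control $\alpha^\eps\in\Ac$ with
\[
\varphi(t,\mu)=v(t,\mu)\ \leq\ \E\Big[\int_t^s f(r,X_r^{t,\xi,\alpha^\eps},\P_{X_r^{t,\xi,\alpha^\eps}},\alpha^\eps_r)\,dr\Big]+v\big(s,\P_{X_s^{t,\xi,\alpha^\eps}}\big)+\eps(s-t),
\]
and replacing $v$ by $\varphi$ on the right. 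Apply It\^o (Theorem \ref{T:Ito}) to $\varphi$ along $X^{t,\xi,\alpha^\eps}$, divide by $s-t$, to obtain
\[
-\eps\ \leq\ \frac1{s-t}\int_t^s\E\Big[\partial_t\varphi(r,\P_{X_r})+f(r,X_r,\P_{X_r},\alpha^\eps_r)+\langle b(\cdots),\partial_\mu\varphi(\cdots)(X_r)\rangle+\tfrac12\textup{tr}[(\sigma\sigma\trans)\partial_x\partial_\mu\varphi(\cdots)(X_r)]\Big]dr,
\]
where the integrand is pointwise bounded above by $\partial_t\varphi(r,\P_{X_r})+\int_{\R^d}\sup_{a\in A}\{f(r,x,\P_{X_r},a)+\langle b(r,x,\P_{X_r},a),\partial_\mu\varphi(r,\P_{X_r})(x)\rangle+\tfrac12\textup{tr}[(\sigma\sigma\trans)(r,x,a)\partial_x\partial_\mu\varphi(r,\P_{X_r})(x)]\}\P_{X_r}(dx)$, uniformly in the control. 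Letting $s\downarrow t$ (using $\Wc_2(\P_{X_s^{t,\xi,\alpha^\eps}},\mu)\to0$ uniformly in $\alpha^\eps$ from the moment estimate in Proposition after \eqref{SDE}, continuity of all the data, the polynomial growth of the $L$-derivatives of $\varphi$, and a uniform integrability argument) and then $\eps\downarrow0$ gives $0\leq\partial_t\varphi(t,\mu)+\int_{\R^d}\sup_{a\in A}\{\cdots\}\mu(dx)$, i.e. the subsolution inequality.

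The main obstacle is the subsolution half: passing to the limit $s\downarrow t$ in the time-averaged expectation requires uniform-in-control estimates so that the law $\P_{X_s^{t,\xi,\alpha^\eps}}$ converges to $\mu$ and the integrands (which involve $\partial_\mu\varphi$ and $\partial_x\partial_\mu\varphi$, only of quadratic growth) converge in $L^1$; this is where Assumption \ref{AssA}(ii) (boundedness of $b,\sigma$), the second-moment bound from the state-equation proposition, and the joint continuity of the $L$-derivatives are all needed, and one must also justify the measurable-selection step that turns $\sup_{a\in A}$ inside the integral into $\sup_{\mathrm a\in\Mc}$ or handle it directly via the integrand bound as above. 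Since this theorem is cited as already proved in \cite{CKGPR20}, the cleanest route is simply to invoke that reference; but the self-contained argument is the one sketched here.
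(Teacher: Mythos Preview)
Your overall strategy---dynamic programming principle plus It\^o's formula on test functions---is exactly the approach of \cite[Theorem 5.5]{CKGPR20}, which the paper simply cites. The subsolution half and the terminal condition are handled correctly.

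There is, however, a genuine gap in your supersolution argument. You test the DPP against \emph{constant} controls $\alpha\equiv a\in A$, which yields, for each fixed $a$,
\[
\partial_t\varphi(t,\mu)+\int_{\R^d}\Big\{f(t,x,\mu,a)+\langle b(t,x,\mu,a),\partial_\mu\varphi(t,\mu)(x)\rangle+\tfrac12\textup{tr}[(\sigma\sigma\trans)(t,x,a)\partial_x\partial_\mu\varphi(t,\mu)(x)]\Big\}\mu(dx)\ \leq\ 0,
\]
and hence only $\sup_{a\in A}\int_{\R^d}\{\cdots\}\mu(dx)\leq-\partial_t\varphi(t,\mu)$. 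But the supersolution inequality in \eqref{HJB} requires $\int_{\R^d}\sup_{a\in A}\{\cdots\}\mu(dx)\leq-\partial_t\varphi(t,\mu)$, which is a \emph{stronger} statement since in general $\sup_a\int\leq\int\sup_a$. Your claimed interchange of $\sup$ and $\int$ is not valid: compactness of $A$ does not justify it (and in any case Assumption~\ref{AssC} is not among the hypotheses of Theorem~\ref{T:Exist}). The fix is to test against controls of the form $\alpha_r\equiv\mathfrak a$ with $\mathfrak a$ an $\Fc_t$-measurable $A$-valued random variable (equivalently, against feedback controls $\mathrm a(\xi)$ for measurable $\mathrm a\colon\R^d\to A$, cf.\ the alternative form of \eqref{HJB} with $\sup_{\mathrm a\in\Mc}$); then a measurable-selection argument lets you approximate the pointwise supremum and recover $\int\sup_a\{\cdots\}\mu(dx)$. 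Ironically, you flag the measurable-selection issue only on the subsolution side, where it is not needed (the pointwise bound you use there goes in the right direction).
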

\begin{proof}[\textbf{Proof.}]
See \cite[Theorem 5.5]{CKGPR20}. Notice that in \cite{CKGPR20} a different definition of viscosity solution is adopted, with $\varphi\in C_b^{1,2}([0,T]\times\Pc_2(\R^d))$, \textit{i.e.} $\varphi\in C^{1,2}([0,T]\times\Pc_2(\R^d))$ and has bounded derivatives, rather than $\varphi\in C_2^{1,2}([0,T]\times\Pc_2(\R^d))$, but the arguments remain the same.
\end{proof}

\begin{Remark}
From \cite[Theorem 5.5]{CKGPR20} we have that Theorem \ref{T:Exist} still holds if we replace \ref{AssB} with the following weaker assumption: the functions $b,\sigma,f$ are uniformly continuous in the time variable $t$, uniformly with respect to $(x,\mu,a)$. Similarly, Assumption \ref{AssA} can be weakened, see \cite{CKGPR20}.
\end{Remark}

\section{Smooth variational principle}
\label{S:SmoothVP}

As  described in the introduction,  the comparison theorem (Theorem \ref{T:Comparison}) relies on a smooth variational principle on $[0,T]\times\Pc_2(\R^d)$, to which the present section is devoted. Such a result is obtained from an extension of the Borwein-Preiss variational principle, for which we refer to \cite{BP87} and, in particular, for its general form, to \cite[Theorem 2.5.2]{BZ05}. An essential tool of \cite[Theorem 2.5.2]{BZ05} is the concept of gauge-type function, whose definition is given below.

\begin{Definition}\label{D:Gauge}
Let $d_2$ be a metric on $\Pc_2(\R^d)$ such that $(\Pc_2(\R^d),d_2)$ is complete. Consider the set $[0,T]\times\Pc_2(\R^d)$ endowed with the metric $((t,\mu),(s,\nu))\mapsto|t-s|+d_2(\mu,\nu)$. A map $\rho\colon([0,T]\times\Pc_2(\R^d))^2\rightarrow[0,+\infty)$ is said to be a \textbf{gauge-type function} if the following holds.
\begin{enumerate}[\upshape a)]
\item $\rho((t,\mu),(t,\mu))=0$, for every $(t,\mu)\in[0,T]\times\Pc_2(\R^d)$.
\item $\rho$ is continuous on $([0,T]\times\Pc_2(\R^d))^2$.
\item For all $\eps>0$, there exists $\eta>0$ such that, for all $(t,\mu),(s,\nu)\in[0,T]\times\Pc_2(\R^d)$, the inequality $\rho((t,\mu),(s,\nu))\leq\eta$ implies $|t-s|+d_2(\mu,\nu)\leq\eps$.
\end{enumerate}
\end{Definition}

\noindent In the sequel we construct a gauge-type function on $[0,T]\times\Pc_2(\R^d)$, taking a particular metric $d_2$ on $\Pc_2(\R^d)$, namely the so-called Gaussian-smooothed 2-Wasserstein distance, see \cite{NGK21}. To this regard, we denote by $\Nc_\varrho:=\Nc(0,\varrho^2 I_d)$, for every $\varrho>0$, the $d$-dimensional multivariate normal distribution with zero mean and covariance matrix $\varrho^2 I_d$, with $I_d$ being the identity matrix of order $d$. Then, for every $\varrho>0$, the Gaussian-smoothed 2-Wasserstein distance is defined as
\[
\Wc_2^{(\varrho)}(\mu,\nu) \ := \ \Wc_2\big(\mu*\Nc_\varrho,\nu*\Nc_\varrho\big), \qquad \forall\,\mu,\nu\in\Pc_2(\R^d),
\]
where $*$ denotes the convolution of probability measures.

\begin{Lemma}\label{L:GaussianWass}
For every $\varrho>0$, $\Wc_2^{(\varrho)}$ is a metric on $\Pc_2(\R^d)$, inducing the same topology as $\Wc_2$. Moreover, $(\Pc_2(\R^d),\Wc_2^{(\varrho)})$ is a complete metric space.
\end{Lemma}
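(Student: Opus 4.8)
The plan is to verify the three claims in turn: that $\Wc_2^{(\varrho)}$ is a metric, that it induces the $\Wc_2$-topology, and that the resulting space is complete. First I would check that $\Wc_2^{(\varrho)}$ is finite on $\Pc_2(\R^d)\times\Pc_2(\R^d)$: since $\Nc_\varrho$ has finite second moment, $\mu*\Nc_\varrho\in\Pc_2(\R^d)$ whenever $\mu\in\Pc_2(\R^d)$ (if $X\sim\mu$, $Z\sim\Nc_\varrho$ independent, then $X+Z$ has finite second moment), so $\Wc_2(\mu*\Nc_\varrho,\nu*\Nc_\varrho)<\infty$. Symmetry and the triangle inequality for $\Wc_2^{(\varrho)}$ are inherited immediately from those of $\Wc_2$ applied to the smoothed measures. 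For positivity, I need: $\Wc_2^{(\varrho)}(\mu,\nu)=0 \iff \mu*\Nc_\varrho=\nu*\Nc_\varrho \iff \mu=\nu$; the last equivalence follows from injectivity of convolution with a Gaussian, which is most cleanly seen via characteristic functions, as $\widehat{\mu*\Nc_\varrho}=\widehat\mu\cdot\widehat{\Nc_\varrho}$ and $\widehat{\Nc_\varrho}$ is nowhere zero, so $\widehat\mu=\widehat\nu$, hence $\mu=\nu$.

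For the topological equivalence, I would argue both directions. One direction is a soft continuity estimate: the map $\mu\mapsto\mu*\Nc_\varrho$ is $1$-Lipschitz from $(\Pc_2(\R^d),\Wc_2)$ to itself, because if $\pi$ is an optimal coupling of $(\mu,\nu)$ then pushing it forward under $(x,y)\mapsto(x+z,y+z)$ and integrating against $\Nc_\varrho(dz)$ yields a coupling of $(\mu*\Nc_\varrho,\nu*\Nc_\varrho)$ with the same cost; hence $\Wc_2^{(\varrho)}(\mu,\nu)\le\Wc_2(\mu,\nu)$, so $\Wc_2$-convergence implies $\Wc_2^{(\varrho)}$-convergence. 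For the converse, suppose $\Wc_2^{(\varrho)}(\mu_n,\mu)\to0$. Then $\mu_n*\Nc_\varrho\to\mu*\Nc_\varrho$ in $\Wc_2$, which in particular forces weak convergence $\mu_n*\Nc_\varrho\rightharpoonup\mu*\Nc_\varrho$ together with convergence of second moments. Weak convergence of the smoothed measures implies weak convergence $\mu_n\rightharpoonup\mu$ (deconvolve at the level of characteristic functions: $\widehat{\mu_n}=\widehat{\mu_n*\Nc_\varrho}/\widehat{\Nc_\varrho}\to\widehat{\mu*\Nc_\varrho}/\widehat{\Nc_\varrho}=\widehat\mu$ pointwise, and apply Lévy's continuity theorem). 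Finally, since $\int|x|^2\,(\mu_n*\Nc_\varrho)(dx)=\int|x|^2\,\mu_n(dx)+d\varrho^2$ by independence and zero mean of the Gaussian, convergence of the second moments of the smoothed measures is equivalent to convergence of the second moments of the $\mu_n$; combined with weak convergence this gives $\Wc_2(\mu_n,\mu)\to0$. The two implications together show $\Wc_2^{(\varrho)}$ and $\Wc_2$ induce the same topology.

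For completeness, I would take a $\Wc_2^{(\varrho)}$-Cauchy sequence $(\mu_n)$ in $\Pc_2(\R^d)$. Then $(\mu_n*\Nc_\varrho)$ is $\Wc_2$-Cauchy in the complete space $(\Pc_2(\R^d),\Wc_2)$, hence converges in $\Wc_2$ to some $\lambda\in\Pc_2(\R^d)$. The point is to show $\lambda$ is itself a Gaussian-smoothed measure, i.e. $\lambda=\mu*\Nc_\varrho$ for some $\mu\in\Pc_2(\R^d)$: by the characteristic function argument, $\widehat{\mu_n}=\widehat{\mu_n*\Nc_\varrho}/\widehat{\Nc_\varrho}\to\widehat\lambda/\widehat{\Nc_\varrho}$ pointwise, and one checks that $\widehat\lambda/\widehat{\Nc_\varrho}$ is a characteristic function — this is where the structure must be used, e.g. by noting that $\mu_n\rightharpoonup\mu$ for some probability measure $\mu$ once one establishes tightness of $(\mu_n)$, which follows because $\sup_n\int|x|^2\,\mu_n(dx)=\sup_n\big(\int|x|^2\,(\mu_n*\Nc_\varrho)(dx)-d\varrho^2\big)<\infty$ by the uniform second-moment bound inherited from the $\Wc_2$-convergent (hence $\Wc_2$-bounded) sequence $(\mu_n*\Nc_\varrho)$. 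Then $\mu\in\Pc_2(\R^d)$, $\lambda=\mu*\Nc_\varrho$, and by the topological equivalence established above $\Wc_2^{(\varrho)}(\mu_n,\mu)\to0$. The main obstacle is precisely this last step — verifying that the $\Wc_2$-limit of the smoothed sequence remains in the image of the smoothing map, so that the candidate limit $\mu$ genuinely lives in $\Pc_2(\R^d)$; everything else is a routine transfer of properties through the $1$-Lipschitz, injective smoothing operator. Alternatively, one may cite \cite{NGK21} for these structural facts about $\Wc_2^{(\varrho)}$ and only supply the short arguments above.
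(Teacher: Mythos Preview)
Your proof is correct and follows the same overall shape as the paper's: both identify the $\Wc_2$-limit $\lambda$ of the smoothed sequence, establish tightness of $(\mu_n)$ to extract a weak limit $\mu$, and use characteristic functions together with L\'evy's continuity theorem to conclude $\lambda=\mu*\Nc_\varrho$. The paper simply cites \cite[Proposition~1]{NGK21} for the metric and topology claims, whereas you supply direct arguments (the $1$-Lipschitz property of $\mu\mapsto\mu*\Nc_\varrho$ and the characteristic-function deconvolution), which is more self-contained.

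The one genuine technical difference is in how the moment information is transferred from $(\mu_n*\Nc_\varrho)$ to $(\mu_n)$. The paper proves an elementary pointwise inequality,
\[
|x|^2\,1_{\{|x|\ge\sqrt h\}} \ \le \ 4|x+z|^2\,1_{\{|x+z|\ge\sqrt{h/2}\}} + 4|z|^2\,1_{\{|z|\ge\sqrt{h/2}\}},
\]
integrates against $\mu_n\otimes\Nc_\varrho$, and thereby upgrades uniform integrability of second moments of $(\mu_n*\Nc_\varrho)$ to uniform integrability for $(\mu_n)$; this yields directly (via \cite[Proposition~7.1.5]{AGS08}) a subsequence converging in $\Wc_2$. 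You instead exploit the exact moment identity $\int|x|^2\,(\mu*\Nc_\varrho)(dx)=\int|x|^2\,\mu(dx)+d\varrho^2$, which immediately gives bounded second moments and hence tightness; you then need a separate (easy) lower-semicontinuity step to check $\mu\in\Pc_2(\R^d)$. Your route is more elementary and avoids the truncation trick; the paper's route yields the slightly stronger intermediate conclusion of uniform integrability, but this extra strength is not needed for the lemma.
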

\begin{proof}[\textbf{Proof.}]
The first part follows from \cite[Proposition 1]{NGK21}. It remains to prove that the metric space $(\Pc_2(\R^d),\Wc_2^{(\varrho)})$ is complete. Let $\{\mu_n\}_n\subset\Pc_2(\R^d)$ be a Cauchy sequence with respect to $\Wc_2^{(\varrho)}$. Then, $\{\mu_n*\Nc_\varrho\}_n$ is a Cauchy sequence with respect to $\Wc_2$. Since $(\Pc_2(\R^d),\Wc_2)$ is complete, there exists some $\bar\nu\in\Pc_2(\R^d)$ such that $\Wc_2(\mu_n*\Nc_\varrho,\bar\nu)\rightarrow0$ as $n\rightarrow\infty$. It follows (see for instance \cite[Proposition 7.1.5]{AGS08}) that $\{\mu_n*\Nc_\varrho\}_n$ has uniformly integrable second moments, namely
\begin{equation}\label{UnifInt2nd}
\lim_{k\rightarrow\infty} \sup_n\int_{|y|\geq k} |y|^2\,(\mu_n*\Nc_\varrho)(dy) \ = \ 0.
\end{equation}
Now notice that, given $a,b,c\in\R_+$, $h\in\N$, if $a\leq b+c$ then $a1_{\{a\geq h\}}\leq2b1_{\{b\geq h/2\}}+2c1_{\{c\geq h/2\}}$. Hence, by the elementary inequality $|x|^2\leq2|x+z|^2+2|z|^2$, valid for every $x,z\in\R^d$, we get
\begin{equation}\label{Ineq_Unif_Int}
|x|^2\,1_{\{|x|\geq\sqrt{h}\}} \ \leq \ 4|x+z|^2\,1_{\{|x+z|\geq\sqrt{h/2}\}}+4|z|^2\,1_{\{|z|\geq\sqrt{h/2}\}}, \qquad \forall\,x,z\in\R^d,\,h\in\N.
\end{equation}
Integrating the above inequality on $\R^d\times\R^d$ with respect to the product measure $\mu_n(dx)\Nc_\varrho(dz)$, we obtain (setting $k:=\sqrt{h}$, to simplify notation)
\begin{align*}
\int_{|x|\geq k} |x|^2\,\mu_n(dx) \ &\leq \ 4\int\!\!\!\int_{|x+z|\geq k/\sqrt{2}} |x+z|^2\,\mu_n(dx)\Nc_\varrho(dz) + 4\int_{|z|\geq k/\sqrt{2}} |z|^2\,\Nc_\varrho(dz) \\
&= \ 4\int_{|y|\geq k/\sqrt{2}} |y|^2\,(\mu_n*\Nc_\varrho)(dy) + 4\int_{|z|\geq k/\sqrt{2}} |z|^2\,\Nc_\varrho(dz).
\end{align*}
Then, by \eqref{UnifInt2nd}, we deduce that $\{\mu_n\}_n$ has uniformly integrable second moments. This implies that $\{\mu_n\}_n$ is tight, so that we can apply \cite[Proposition 7.1.5]{AGS08}, from which we deduce the existence of a subsequence $\{\mu_{n_k}\}_k$ converging to some $\nu\in\Pc_2(\R^d)$ with respect to $\Wc_2$. Notice that (we denote by $\varphi_{\pi}$ the characteristic function of the probability measure $\pi\in\Pc(\R^d)$)
\[
\varphi_{\mu_{n_k}*\Nc_\varrho}(u) \ = \ \varphi_{\mu_{n_k}}(u)\,\textup{e}^{-\frac{1}{2}\varrho^2|u|^2} \ \overset{k\rightarrow\infty}{\longrightarrow} \ \varphi_\nu(u)\,\textup{e}^{-\frac{1}{2}\varrho^2|u|^2} \ = \ \varphi_{\nu*\Nc_\varrho}(u).
\]
Then, by L\'evy's continuity theorem it follows that $\Wc_2(\mu_{n_k}*\Nc_\varrho,\nu*\Nc_\varrho)\rightarrow0$ as $k\rightarrow\infty$. This implies that $\nu*\Nc_\varrho=\bar\nu$. By a standard argument, the entire sequence $\{\mu_n*\Wc_\sigma\}_n$ converges to $\nu*\Nc_\varrho$ with respect to $\Wc_2$. This shows that $\Wc_2^{(\varrho)}(\mu_n,\nu)\rightarrow0$ as $n\rightarrow\infty$ and concludes the proof.
\end{proof}

\noindent Our aim is to find a gauge-type function $\rho=\rho((t,\mu),(t_0,\mu_0))$ smooth with respect to $(t,\mu)$, for every fixed $(t_0,\mu_0)$, on $[0,T]\times\Pc_2(\R^d)$ endowed with the metric $((t,\mu),(s,\nu))\mapsto|t-s|+\Wc_2^{(\varrho)}(\mu,\nu)$. The construction of our smooth gauge-type function (whose definition is given in Lemma \ref{L:SmoothGauge} below) relies on a sharp upper bound of $\Wc_2$ obtained in \cite{DSS13,FG15} (see also \cite[Section 5.1.2]{CD18_I}), which is valid in any dimension $d$ and is reported in Lemma \ref{L:UpperBound}. Notice however that, in the particular case $d=1$, ad hoc gauge-type functions may be constructed in easier ways, as for instance relying on the following inequality (see \cite[Proposition 7.14]{BL19}):
\begin{equation}\label{UpperBound1D}
\Wc_2(\mu,\nu)^2 \ \leq \ 4\int_{-\infty}^{+\infty} |x|\,\big|F_\mu(x) - F_\nu(x)\big|\,dx, \qquad \forall\,\mu,\nu\in\Pc_2(\R),
\end{equation}
where $F_\mu$ and $F_\nu$ are the cumulative distribution functions of $\mu$ and $\nu$, respectively. When $d\in\N$, the upper bound of Lemma \ref{L:UpperBound} can be viewed as a $d$-dimensional analogue of \eqref{UpperBound1D}.

\begin{Lemma}\label{L:UpperBound}
For every integer $\ell\geq0$, let $\mathscr P_\ell$ denote the partition of $(-1,1]^d$ into $2^{d\ell}$ translations of $(-2^{-\ell},2^{-\ell}]^d$. Moreover, let $B_0:=(-1,1]^d$ and, for every integer $n\geq1$, $B_n:=(-2^n,2^n]^d\backslash(-2^{n-1},2^{n-1}]^d$. Then, for every $\mu,\nu\in\Pc_2(\R^d)$, the following inequality holds:
\begin{equation}\label{UpperBound}
\big(\Wc_2(\mu,\nu)\big)^2 \ \leq \ c_d \sum_{n\geq0} 2^{2n} \sum_{\ell\geq0} 2^{-2\ell} \sum_{B\in\mathscr P_\ell} \big|\mu\big((2^nB)\cap B_n\big) - \nu\big((2^nB)\cap B_n\big)\big|,
\end{equation}
where $2^nB:=\{2^nx\in\R^d\colon x\in B\}$ and $c_d>0$ is a constant depending only on $d$.
\end{Lemma}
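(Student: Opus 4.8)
The plan is to prove the estimate \eqref{UpperBound} by a dyadic/multiscale decomposition of $\R^d$ combined with a scaling argument, reducing everything to the single scale of the unit cube. The starting point is the trivial observation that if $\mu$ and $\nu$ are both supported on a bounded set of diameter $D$, then $\Wc_2(\mu,\nu)^2 \le D^2 \, \Wc_1(\mu,\nu) / D = D \,\Wc_1(\mu,\nu)$, more precisely $\Wc_2(\mu,\nu)^2 \le D\,\Wc_1(\mu,\nu)$ whenever the supports are contained in a set of diameter $D$ (couple mass that moves, pay $|x-y|^2 \le D|x-y|$), and in turn $\Wc_1(\mu,\nu) \le \tfrac{D}{2}\,\|\mu-\nu\|_{TV}$ up to a dimensional constant, or more efficiently one estimates $\Wc_1$ directly on the cube by a Haar-type telescoping over the partitions $\mathscr P_\ell$. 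So first I would record the single-cube estimate: for probability measures $\mu,\nu$ on $(-1,1]^d$,
\[
\Wc_2(\mu,\nu)^2 \ \le \ c_d' \sum_{\ell\ge 0} 2^{-2\ell} \sum_{B\in\mathscr P_\ell} \big|\mu(B) - \nu(B)\big|,
\]
which is exactly the $n=0$ term of \eqref{UpperBound}. This is the heart of the matter and is where I expect to cite or reproduce the argument of \cite{DSS13,FG15} (see also \cite[Section 5.1.2]{CD18_I}): one builds a transport plan scale by scale, at level $\ell$ rearranging mass within each dyadic subcube of side $2^{-\ell}$ to correct the discrepancy $|\mu(B)-\nu(B)|$, each such correction costing a distance $\lesssim 2^{-\ell}$, hence a quadratic cost $\lesssim 2^{-2\ell}$ times the transported mass $|\mu(B)-\nu(B)|$; summing the geometric hierarchy gives the bound with a constant $c_d'$ depending only on $d$.

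Next I would globalize via the annular decomposition $\R^d = \bigcup_{n\ge 0} B_n$ with $B_0 = (-1,1]^d$ and $B_n = (-2^n,2^n]^d \setminus (-2^{n-1},2^{n-1}]^d$. Writing $\mu = \sum_n \mu\llcorner B_n$ and likewise for $\nu$, I would bound $\Wc_2(\mu,\nu)^2$ by $\big(\sum_n \Wc_2(\mu\llcorner B_n, \nu\llcorner B_n)^{?}\big)$ — here one must be slightly careful because the restrictions are not probability measures and have possibly different masses. The clean way is: the convexity/superadditivity of the squared Wasserstein distance along a partition of the domain gives $\Wc_2(\mu,\nu)^2 \le \sum_n \Wc_2^{(B_n)}(\mu\llcorner B_n,\nu\llcorner B_n)$ where the right-hand object is the cost of an optimal \emph{partial} transport plan on $B_n$ restricted to matching the common mass plus a term transporting the mass defect, all of which stays inside $B_n$ (diameter $\lesssim 2^n$) and hence is controlled by $2^{2n}$ times a total-variation-type quantity on $B_n$. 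Applying the rescaling $x \mapsto 2^{-n}x$, which maps $B_n$ into $(-1,1]^d$ and $(2^nB)\cap B_n$-cells into $\mathscr P_\ell$-cells, and invoking the single-cube estimate of Step~1 on the pushed-forward measures (whose scaled squared cost picks up the factor $2^{2n}$), yields precisely the inner double sum $\sum_{\ell\ge 0} 2^{-2\ell}\sum_{B\in\mathscr P_\ell}|\mu((2^nB)\cap B_n) - \nu((2^nB)\cap B_n)|$ weighted by $2^{2n}$. Summing over $n\ge 0$ gives \eqref{UpperBound} with $c_d$ absorbing $c_d'$ and the fixed geometric constants from the annular decomposition.

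The main obstacle is the bookkeeping in Step~2 around the mass defects: $\mu\llcorner B_n$ and $\nu\llcorner B_n$ generally have different total masses, so one cannot directly speak of $\Wc_2$ between them, and one must either (i) work with a global optimal plan $\pi$ for $\Wc_2(\mu,\nu)$, decompose its cost according to which annulus the source point lies in, and bound each piece — using that any mass which $\pi$ moves between two different annuli $B_n, B_{n'}$ with $n' \le n$ travels a distance $\lesssim 2^n$ and can be charged to the coarser scale — or (ii) set up an explicit hierarchical plan from the outset that simultaneously handles inter-annulus and intra-annulus transport. Approach (i) is cleaner to write: fix the optimal $\pi$, note $\Wc_2(\mu,\nu)^2 = \int |x-y|^2 \pi(dx,dy)$, split the integration domain by $\{x \in B_n\}$, and on each piece dominate $|x-y|^2$ by bounding the transport distance by the diameter of the smallest annulus containing both $x$ and $y$; then the contribution of scale $n$ is $\lesssim 2^{2n}$ times the mass of $\pi$ charging annulus $n$ at the coarse scale, and this mass is in turn controlled by the discrepancies $|\mu(\cdot)-\nu(\cdot)|$ on $B_n$-cells via the Step~1 construction applied annulus-by-annulus. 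I would also remark, following the text, that in dimension $d=1$ one can bypass all of this using \eqref{UpperBound1D} directly, so the lemma is only needed in its full strength for $d\ge 2$.
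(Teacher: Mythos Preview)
The paper does not give a self-contained proof of this lemma; it simply cites \cite[Lemmas~5 and~6]{FG15} (equivalently \cite[Lemmas~5.11 and~5.12]{CD18_I}). Your outline --- single-cube multiscale estimate on $(-1,1]^d$ via a hierarchical transport plan, then globalization by the annular decomposition $\R^d=\bigcup_n B_n$ together with the rescaling $x\mapsto 2^{-n}x$ --- is precisely the structure of the Fournier--Guillin argument being cited, so at the level of strategy you are aligned with the paper.

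There is, however, a genuine gap in your ``approach~(i)'' for the globalization step. Fixing the \emph{optimal} plan $\pi$ for $\Wc_2(\mu,\nu)$ and attempting to bound $\int |x-y|^2\,\pi(dx,dy)$ in terms of the cell discrepancies does not work: the optimal plan has no a~priori reason to respect the annular structure (mass in $B_n$ may be sent to any $B_{n'}$ under $\pi$, with $|x-y|$ unbounded in terms of $2^n$), and the mass $\pi(B_n\times\R^d)$ equals $\mu(B_n)$, not the discrepancy $|\mu(B_n)-\nu(B_n)|$. Since the cost of the optimal plan \emph{is} $\Wc_2(\mu,\nu)^2$, there is no slack to exploit; an upper bound requires \emph{constructing} a plan. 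The actual argument (your approach~(ii), and what \cite{FG15} does) is fully constructive: one first transfers the mass defects between annuli at cost $\lesssim \sum_n 2^{2n}|\mu(B_n)-\nu(B_n)|$ (this is the $\ell=0$ layer of the sum), and then, within each $B_n$ with masses now equalized, applies the rescaled single-cube construction to produce the remaining $\ell\ge 1$ layers.
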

\begin{proof}[\textbf{Proof.}]
Inequality \eqref{UpperBound} follows from \cite[Lemma 5 and Lemma 6]{FG15} (or, equivalently, \cite[Lemma 5.11 and Lemma 5.12]{CD18_I}).
\end{proof}

\noindent Next lemma provides the claimed smooth gauge-type function and it is the main result of the present section. Notice that such a gauge-type function is obtained performing a smoothing of the right-hand side of \eqref{UpperBound}, proceeding as follows.
\begin{enumerate}[a)]
\item Firstly, the absolute value of the difference $\mu((2^n B)\cap B_n)-\nu((2^n B)\cap B_n)$ appearing in \eqref{UpperBound} is replaced by $\sqrt{|\mu((2^n B)\cap B_n)-\nu((2^n B)\cap B_n)|^2+\delta_{n,\ell}^2}-\delta_{n,\ell}$, with $\delta_{n,\ell}=2^{-(4n+2d\ell)}$. In other words, we replace $|\cdot|$ by the smooth function $\sqrt{\cdot+\delta_{n,\ell}^2}-\delta_{n,\ell}$. The particular choice of $\delta_{n,\ell}$ will be used to obtain the convergence of a certain series (see \eqref{Series_delta_n,ell}).
\item Secondly, as already mentioned, our function will be of gauge-type on $[0,T]\times\Pc_2(\R^d)$, with $[0,T]\times\Pc_2(\R^d)$ endowed with the metric $((t,\mu),(s,\nu))\mapsto|t-s|+\Wc_2^{(\varrho)}(\mu,\nu)$. As a consequence, we consider \eqref{UpperBound} for $\Wc_2^{(\varrho)}(\mu,\nu)=\Wc_2(\mu*\Nc_{\varrho},\nu*\Nc_{\varrho})$. This implies that $\mu((2^n B)\cap B_n)$ and $\nu((2^n B)\cap B_n)$ are replaced respectively by $(\mu*\Nc_{\varrho})((2^n B)\cap B_n)$ and $(\nu*\Nc_{\varrho})((2^n B)\cap B_n)$.
\end{enumerate}

\begin{Lemma}\label{L:SmoothGauge}
We adopt the same notations as in Lemma \ref{L:UpperBound}. Let  $\varrho>0$ and $\rho_{2,\varrho}\colon([0,T]\times\Pc_2(\R^d))^2\rightarrow[0,+\infty)$ be defined as
\begin{align*}
&\rho_{2,\varrho}\big((t,\mu),(s,\nu)\big) \ = \ |t - s|^2 \, + \\
&+ c_d \sum_{n\geq0} 2^{2n} \sum_{\ell\geq0} 2^{-2\ell} \sum_{B\in\mathscr P_\ell} \Big(\sqrt{\big|(\mu*\Nc_{\varrho})\big((2^nB)\cap B_n\big) - (\nu*\Nc_{\varrho})\big((2^nB)\cap B_n\big)\big|^2 + \delta_{n,\ell}^2} - \delta_{n,\ell}\Big). \notag
\end{align*}
with $\delta_{n,\ell}:=2^{-(4n+2d\ell)}$. Then, the following holds.
\begin{enumerate}[\upshape 1)]
\item $\rho_{2,\varrho}$ is a gauge-type function on $[0,T]\times\Pc_2(\R^d)$, with $[0,T]\times\Pc_2(\R^d)$ endowed with the metric $((t,\mu),(s,\nu))\mapsto|t-s|+\Wc_2^{{(\varrho)}}(\mu,\nu)$;
\item for every fixed $(t_0,\mu_0)\in[0,T]\times\Pc_2(\R^d)$, the map $(t,\mu)\mapsto\rho_{2,\varrho}((t,\mu),(t_0,\mu_0))$ is in $C^{1,2}([0,T]\times\Pc_2(\R^d))$;
\item there exists a constant $C_d$ (depending only on the dimension $d$) such that
\begin{align}
\big|\partial_t\rho_{2,\varrho}\big((t,\mu),(t_0,\mu_0)\big)\big| \ &\leq \ 2\,T, \label{bound_time_derivative} \\
\big|\partial_\mu\rho_{2, \varrho}\big((t,\mu),(t_0,\mu_0)\big)(x)\big| \ &\leq \ \frac{C_d}{\varrho^2}\bigg(\int_{\R^d} |y|^3\,\zeta_{\varrho}(y)\,dy + |x|^2\int_{\R^d} |y|\,\zeta_{\varrho}(y)\,dy\bigg), \label{bounds_derivatives_1} \\
\big|\partial_x\partial_\mu\rho_{2,\varrho}\big((t,\mu),(t_0,\mu_0)\big)(x)\big| \ &\leq \ C_d\bigg(\int_{\R^d}|y|^2\,\big(\sqrt{d}\,\varrho^{-2} + |y|^2 \varrho^{-4}\big)\,\zeta_{\varrho}(y)\,dy \label{bounds_derivatives_2} \\
&\quad \ + |x|^2 \int_{\R^d}\big(\sqrt{d}\,\varrho^{-2} + |y|^2 \varrho^{-4}\big)\,\zeta_{\varrho}(y)\,dy\bigg), \notag
\end{align}
for all $(t,\mu),(t_0,\mu_0)\in[0,T]\times\Pc_2(\R^d)$, $x\in\R^d$,  where
\begin{equation}\label{zeta_varrho}
\zeta_\varrho(y) \ = \ \frac{1}{(2\pi)^{d/2}\varrho^d} \textup{e}^{-\frac{1}{2}\frac{|y|^2}{\varrho^2}}, \qquad \forall\,y\in\R^d.
\end{equation}
\end{enumerate}
\end{Lemma}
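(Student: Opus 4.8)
The starting point is that, for every Borel set $E\subseteq\R^d$, the map $\mu\mapsto(\mu*\Nc_\varrho)(E)$ is \emph{affine} in $\mu$ with a smooth, bounded kernel: writing $h_E:=\mathds 1_E*\zeta_\varrho$ one has $(\mu*\Nc_\varrho)(E)=\int_{\R^d}h_E\,d\mu$, where $0\le h_E\le1$, $h_E$ is smooth with bounded derivatives of all orders, and $\partial^\gamma h_E=\mathds 1_E*\partial^\gamma\zeta_\varrho$ for every multi-index $\gamma$. Setting $E_{n,\ell,B}:=(2^nB)\cap B_n$, $h_{n,\ell,B}:=h_{E_{n,\ell,B}}$, $\Delta_{n,\ell,B}(\mu):=\int h_{n,\ell,B}\,d\mu-\int h_{n,\ell,B}\,d\mu_0$, and $\Phi_\delta(s):=\sqrt{s^2+\delta^2}-\delta$ (a $C^\infty$ function with $\Phi_\delta(0)=0$ and $|\Phi_\delta'|\le1$), every summand of $\rho_{2,\varrho}(\cdot,(t_0,\mu_0))$ is the composition $\Phi_{\delta_{n,\ell}}\circ\Delta_{n,\ell,B}$ of a smooth scalar function with an affine functional with smooth kernel. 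Two facts will be used repeatedly: the telescoping $\sum_{B\in\mathscr P_\ell}\mathds 1_{E_{n,\ell,B}}=\mathds 1_{B_n}$ (so that, after summing over $B$, the derivatives of the $h_{n,\ell,B}$ collapse to integrals over the single set $B_n$, independently of $\ell$), and the bound $\sum_{n\ge0}2^{2n}\mathds 1_{B_n}(y)\le C(1+|y|^2)$, which together with $\sum_{\ell\ge0}2^{-2\ell}<\infty$ makes the double series converge against the finite Gaussian moments.

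For assertion 1). Convergence and finiteness of the defining series follow from $0\le\Phi_\delta(s)\le|s|$, the bound $\sum_{B\in\mathscr P_\ell}|\Delta_{n,\ell,B}(\mu)|\le(\mu*\Nc_\varrho)(B_n)+(\mu_0*\Nc_\varrho)(B_n)$, and the moment bound above (which makes $\sum_n2^{2n}(\mu*\Nc_\varrho)(B_n)<\infty$ since $\mu*\Nc_\varrho\in\Pc_2(\R^d)$). Property a) in Definition \ref{D:Gauge} is immediate from $\Phi_\delta(0)=0$. For b), each summand is continuous on $([0,T]\times\Pc_2(\R^d))^2$ (since $h_{n,\ell,B}$ is bounded continuous and $\Wc_2^{(\varrho)}$ induces the $\Wc_2$-topology by Lemma \ref{L:GaussianWass}), and continuity of the sum is obtained by a sequential argument: along a $\Wc_2^{(\varrho)}$-convergent sequence the tail $\sum_{n>N}(\cdots)$ is bounded by $C\big(\int_{\{|y|>2^{N-1}\}}|y|^2\,d(\mu_k*\Nc_\varrho)+\int_{\{|y|>2^{N-1}\}}|y|^2\,d(\nu_k*\Nc_\varrho)\big)$, which tends to $0$ uniformly in $k$ by uniform integrability of second moments along $\Wc_2$-convergent sequences (\cite[Proposition 7.1.5]{AGS08}). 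For c), one applies Lemma \ref{L:UpperBound} to $\mu*\Nc_\varrho$ and $\nu*\Nc_\varrho$ to get $\Wc_2^{(\varrho)}(\mu,\nu)^2\le c_d\sum_{n,\ell}2^{2n}2^{-2\ell}\sum_B|\Delta_{n,\ell,B}|$ (now with $\Delta_{n,\ell,B}$ the $\mu$–$\nu$ difference), and controls each $|\Delta_{n,\ell,B}|$ by the corresponding smoothed term through $\Phi_\delta(s)\ge\frac{|s|}{1+\sqrt2}$ for $|s|\ge\delta$ and $\Phi_\delta(s)\ge\frac{s^2}{2\sqrt2\,\delta}$ for $|s|\le\delta$ (both for $\delta\le1$): splitting the index set accordingly, the first regime yields a multiple of the series defining $\rho_{2,\varrho}$, and the second, after a Cauchy–Schwarz step over $B$ and then over $(n,\ell)$ against the convergent series $\sum_{n,\ell}2^{2n}2^{-2\ell}2^{d\ell}\delta_{n,\ell}=\sum_n2^{-2n}\sum_\ell2^{-(2+d)\ell}<\infty$ — here the choice $\delta_{n,\ell}=2^{-(4n+2d\ell)}$ is essential — a term of the form $C'\sqrt R$, where $R$ is the series part of $\rho_{2,\varrho}((t,\mu),(s,\nu))$. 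Together with $|t-s|^2\le\rho_{2,\varrho}$ this gives $|t-s|^2+\Wc_2^{(\varrho)}(\mu,\nu)^2\le C_1R+C_2\sqrt R$, which is property c).

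For assertions 2) and 3). The term $|t-s|^2$ gives $\partial_t\rho_{2,\varrho}=2(t-t_0)$, hence \eqref{bound_time_derivative}. For the $\mu$-part, the $L$-derivative chain rule (applied to the lifting $\xi\mapsto\Phi_{\delta_{n,\ell}}(\E[h_{n,\ell,B}(\xi)]-c_{n,\ell,B})$) gives, for each summand,
\[
\partial_\mu\big(\Phi_{\delta_{n,\ell}}\circ\Delta_{n,\ell,B}\big)(\mu)(x)=\Phi_{\delta_{n,\ell}}'\big(\Delta_{n,\ell,B}(\mu)\big)\,\nabla h_{n,\ell,B}(x),
\]
\[
\partial_x\partial_\mu\big(\Phi_{\delta_{n,\ell}}\circ\Delta_{n,\ell,B}\big)(\mu)(x)=\Phi_{\delta_{n,\ell}}'\big(\Delta_{n,\ell,B}(\mu)\big)\,\nabla^2 h_{n,\ell,B}(x),
\]
with $|\Phi_\delta'|\le1$ (note that, $\partial_x\partial_\mu$ being the $x$-Jacobian of $\partial_\mu$, no second derivative of $\Phi_\delta$ appears). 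Since $\nabla h_E(x)=\varrho^{-2}\int_E(y-x)\zeta_\varrho(y-x)\,dy$ and $\nabla^2 h_E(x)=\int_E\big(\varrho^{-4}(y-x)(y-x)\trans-\varrho^{-2}I_d\big)\zeta_\varrho(y-x)\,dy$, summing over $B\in\mathscr P_\ell$ collapses these to $\int_{B_n}(\cdots)\zeta_\varrho(y-x)\,dy$, independently of $\ell$; the factor $\sum_\ell2^{-2\ell}$ then absorbs the $\ell$-sum, and $\sum_n2^{2n}\int_{B_n}(\cdots)\zeta_\varrho(y-x)\,dy=\int\big(\sum_n2^{2n}\mathds 1_{B_n}(y)\big)(\cdots)\zeta_\varrho(y-x)\,dy\le C\int(1+|y|^2)(\cdots)\zeta_\varrho(y-x)\,dy$; the change of variables $w=y-x$, the inequality $|w+x|^2\le2|w|^2+2|x|^2$, and $\zeta_\varrho$ as in \eqref{zeta_varrho} then yield the right-hand sides of \eqref{bounds_derivatives_1}–\eqref{bounds_derivatives_2}. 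Because these bounds are dominated by a convergent series that does not depend on $\mu$, the series of $\mu$- and $x\mu$-derivatives converge; that the formally differentiated series indeed computes $\partial_\mu$ and $\partial_x\partial_\mu$ of the sum, that the partial sums (manifestly in $C^{1,2}$) pass the limit into $C^{1,2}$, and that the limiting derivatives are jointly continuous in $(t,x,\mu)$, are all obtained by a sequential argument: along an $L^2$-convergent sequence $\xi_k\to\xi$, the masses $\E[\mathds 1_{B_n}(\xi_k)]$ are uniformly integrable by Vitali's theorem, which makes the differentiated series converge uniformly along the sequence.

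The hardest points are the following. First, the gauge estimate c): because the smoothing introduces, for each index, an irreducible error of size $\delta_{n,\ell}$ whose weighted sum is a \emph{fixed} positive constant, the naive bound $|\Delta_{n,\ell,B}|\le\Phi_{\delta_{n,\ell}}(\Delta_{n,\ell,B})+\delta_{n,\ell}$ is useless, and one must carry through the two-regime split together with the Cauchy–Schwarz step against the series $\sum_{n,\ell}2^{2n}2^{-2\ell}2^{d\ell}\delta_{n,\ell}$ in order to arrive at a bound of the genuinely vanishing form $C_1R+C_2\sqrt R$. Second, the rigorous justification of the term-by-term differentiation and of the joint continuity of the resulting derivatives: balls in $\Pc_2(\R^d)$ (equivalently in $L^2$) are not uniformly integrable, so uniform convergence of the differentiated series on bounded sets is unavailable, and the argument has to be run along convergent sequences, exploiting the uniform integrability that $L^2$-convergence provides, to transfer $C^{1,2}$-regularity and continuity of the derivatives from the partial sums to the limit.
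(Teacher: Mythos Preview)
Your proposal is correct and follows the same overall architecture as the paper's proof: writing each summand as $\Phi_{\delta_{n,\ell}}\circ\Delta_{n,\ell,B}$ with $\Delta$ an affine functional of $\mu$ with smooth Gaussian kernel; exploiting the telescoping $\sum_{B\in\mathscr P_\ell}\mathds 1_{E_{n,\ell,B}}=\mathds 1_{B_n}$ together with $2^{2n}\lesssim|y|^2$ on $B_n$ and the finiteness of Gaussian moments; and justifying the term-by-term differentiation by uniform-integrability arguments along $L^2$-convergent sequences. Parts 2) and 3) are treated essentially identically in the paper.

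The only noteworthy difference is in the verification of the gauge property c). You control $|\Delta|$ via the two-regime inequalities $\Phi_\delta(s)\ge|s|/(1+\sqrt2)$ for $|s|\ge\delta$ and $\Phi_\delta(s)\ge s^2/((1+\sqrt2)\delta)$ for $|s|\le\delta$, and then apply Cauchy--Schwarz over $(n,\ell,B)$ against the convergent series $\sum_{n,\ell}2^{2n-2\ell+d\ell}\delta_{n,\ell}$ to obtain a bound of the form $C_1\rho+C_2\sqrt{\rho}$. The paper instead uses the single algebraic identity $|\Delta|=\sqrt{a^2+2\delta a}\le a+\sqrt{2\delta a}$ (with $a=\Phi_\delta(\Delta)$) and, in the square-root term, substitutes the pointwise bound $a_{n,\ell}\le\rho_{2,\varrho}/(c_d\,2^{2(n-\ell)})$ coming from the hypothesis $\rho_{2,\varrho}\le\eta$; summing then yields $d_{2,\varrho}\le\eta+4\sqrt{2c_d\,\eta}$. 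Both routes hinge on exactly the same summability condition for $\delta_{n,\ell}=2^{-(4n+2d\ell)}$; the paper's pointwise substitution avoids Cauchy--Schwarz and gives an explicit $\eta_\eps$, while your argument is slightly more modular and does not require first isolating the individual-term bound.
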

\begin{proof}[\textbf{Proof.}]
We split the proof into four steps.

\vspace{1mm}

\noindent\emph{Step I.} \emph{Uniform convergence of the series in $\rho_{2,\varrho}$.} We prove a preliminary result concerning the series in $\rho_{2,\varrho}$. Let $\mathcal M$ be a subset of $\Pc_2(\R^d)$ such that $\{\mu*\Nc_{\varrho}\}_{\mu\in\Mc}$ has uniformly integrable second moments. Our aim is to prove that the series appearing in the definition of $\rho_{2,\varrho}$ converges uniformly with respect to $\mu,\nu\in\Mc$. More precisely, we prove that for every $\eps>0$ there exists $N=N(\eps)\in\N$ such that
\begin{equation}\label{SeriesUnifConv}
\sup_{\mu,\nu\in\Mc}\sum_{n\geq N} 2^{2n} \sum_{\ell\geq0} 2^{-2\ell} \sum_{B\in\mathscr P_\ell} \big|(\mu*\Nc_{\varrho})\big((2^nB)\cap B_n\big) - (\nu*\Nc_{\varrho})\big((2^nB)\cap B_n\big)\big| \ \leq \ \eps.
\end{equation}
Then, the claim follows from the elementary inequality $\sqrt{a^2+\delta_{n,\ell}^2}-\delta_{n,\ell}\leq|a|$, valid for every $a\in\R$. Let us prove \eqref{SeriesUnifConv}. First of all, notice that
\begin{align*}
&\sum_{n\geq0} 2^{2n} \sum_{\ell\geq0} 2^{-2\ell} \sum_{B\in\mathscr P_\ell} \big|(\mu*\Nc_{\varrho})\big((2^nB)\cap B_n\big) - (\nu*\Nc_{\varrho})\big((2^nB)\cap B_n\big)\big| \\
&\leq \ \sum_{n\geq0} 2^{2n} \sum_{\ell\geq0} 2^{-2\ell} \sum_{B\in\mathscr P_\ell} (\mu*\Nc_{\varrho})\big((2^nB)\cap B_n\big) + \sum_{n\geq0} 2^{2n} \sum_{\ell\geq0} 2^{-2\ell} \sum_{B\in\mathscr P_\ell} (\nu*\Nc_{\varrho})\big((2^nB)\cap B_n\big).
\end{align*}
Observe also that $\sum_{B\in\mathscr P_\ell} (\mu*\Nc_{\varrho})((2^nB)\cap B_n)=(\mu*\Nc_\varrho)(B_n)$, therefore $\sum_{\ell\geq0} 2^{-2\ell}\sum_{B\in\mathscr P_\ell} (\mu*\Nc_\varrho)((2^nB)\cap B_n)=(4/3)(\mu*\Nc_\varrho)(B_n)$, since $\sum_{\ell\geq0} 2^{-2\ell}=4/3$. So, in particular, \eqref{SeriesUnifConv} follows if we prove that for every $\eps>0$ there exists $N=N(\eps)\in\N$ such that
\[
\sup_{\mu\in\Mc}\sum_{n\geq N} 2^{2n} (\mu*\Nc_{\varrho})(B_n) \ \leq \ \eps.
\]
Recalling that $B_n=(-2^n,2^n]^d\backslash(-2^{n-1},2^{n-1}]^d$, we obtain $2^{2n}\leq4|x|^2/d$, $\forall\,x\in B_n$. Hence, for every $N\in\N$,
\[
\sum_{n\geq N} 2^{2n} (\mu*\Nc_{\varrho})(B_n) \ \leq \ \frac{4}{d} \int_{\R^d\backslash(-2^{N-1},2^{N-1}]^d} |x|^2\,(\mu*\Nc_{\varrho})(dx).
\]
Since the family $\{\mu*\Nc_{\varrho}\}_{\mu\in\Mc}$ has uniformly integrable second moments, the claim follows.

\vspace{1mm}

\noindent\emph{Step II. $\rho_{2,\varrho}$ is a gauge-type function on $[0,T]\times\Pc_2(\R^d)$ with respect to the metric $((t,\mu),(s,\nu))\mapsto|t-s|+\Wc_2^{{(\varrho)}}(\mu,\nu)$.} It is clear that $\rho_{2, \varrho}$ satisfies item a) of Definition \ref{D:Gauge}. Concerning items b) and c), we split the rest of the proof of Step II into two substeps.

\vspace{1mm}

\noindent\emph{$\rho_{2,\varrho}$ satisfies item b) of Definition \ref{D:Gauge}.} Our aim is to prove that, given $\{(t_k,\mu_k)\}_k,\{(s_k,\nu_k)\}_k\subset[0,T]\times\Pc_2(\R^d)$ and $(t,\mu),(s,\nu)\in[0,T]\times\Pc_2(\R^d)$, if $|t_k-t|+\Wc_2^{{(\varrho)}}(\mu_k,\mu)+|s_k-s|+\Wc_2^{{(\varrho)}}(\nu_k,\nu)\rightarrow0$ then 
$\rho_{2, \varrho}((t_k,\mu_k),(s_k,\nu_k))\rightarrow\rho_{2,\varrho}((t,\mu),(s,\nu))$. In particular, we have to prove that, if $\Wc_2^{{(\varrho)}}(\mu_k,\mu)+\Wc_2^{{(\varrho)}}(\nu_k,\nu)\rightarrow0$, then
\begin{align*}
&\sum_{n\geq0} 2^{2n} \sum_{\ell\geq0} 2^{-2\ell} \sum_{B\in\mathscr P_\ell} \Big(\sqrt{\big|\big((\mu_k*\Nc_{\varrho}) - (\nu_k*\Nc_{\varrho})\big)\big((2^nB)\cap B_n\big)\big|^2 + \delta_{n,\ell}^2} - \delta_{n,\ell}\Big) \\
&\overset{k\rightarrow\infty}{\longrightarrow} \ \sum_{n\geq0} 2^{2n} \sum_{\ell\geq0} 2^{-2\ell} \sum_{B\in\mathscr P_\ell} \Big(\sqrt{\big|\big((\mu*\Nc_{\varrho}) - (\nu*\Nc_{\varrho})\big)\big((2^nB)\cap B_n\big)\big|^2 + \delta_{n,\ell}^2} - \delta_{n,\ell}\Big). \notag
\end{align*}
Since $\Wc_2^{{(\varrho)}}(\mu_k,\mu)=\Wc_2(\mu_k*\Nc_{\varrho},\mu*\Nc_{\varrho})$ and $\Wc_2^{{(\varrho)}}(\nu_k,\nu)=\Wc_2(\nu_k*\Nc_{\varrho},\nu*\Nc_{\varrho})$, we have that $\Wc_2(\mu_k*\Nc_{\varrho},\mu*\Nc_{\varrho})+\Wc_2(\nu_k*\Nc_{\varrho},\nu*\Nc_{\varrho})\rightarrow0$. Now, recall from \cite[Proposition 7.1.5]{AGS08} that this implies that $\{\mu_k*\Nc_{\varrho}\}_k$ (resp. $\{\nu_k*\Nc_{\varrho}\}_k$) weakly converges to $\mu*\Nc_{\varrho}$ (resp. $\nu*\Nc_{\varrho}$) and has uniformly integrable second moments. Since both $\mu*\Nc_{\varrho}$ and $\nu*\Nc_{\varrho}$ are absolutely continuous with respect to the Lebesgue measure on $\R^d$, by the weak convergence (and, in particular, by the portmanteau theorem) we deduce that
\[
\lim_{k\rightarrow\infty} (\mu_k*\Nc_{\varrho})\big((2^nB)\cap B_n\big) \ = \ (\mu*\Nc_{\varrho})\big((2^nB)\cap B_n\big).
\]
Similarly $(\nu_k*\Nc_{\varrho})((2^nB)\cap B_n)\rightarrow(\nu*\Nc_{\varrho})((2^nB)\cap B_n)$. In addition, since $\{\mu_k*\Nc_{\varrho}\}_k$ and $\{\nu_k*\Nc_{\varrho}\}_k$ have uniformly integrable second moments, from Step I we can interchange the limit with the series, so that the claim follows.

\vspace{1mm}

\noindent\emph{$\rho_{2,\varrho}$ satisfies item c) of Definition \ref{D:Gauge}.} Our aim is to prove the following: for every $\eps>0$, there exists $\eta_\eps>0$ such that, for all $(t,\mu),(s,\nu)\in[0,T]\times\Pc_2(\R^d)$, the inequality $\rho_{2,\varrho}((t,\mu),(s,\nu))\leq\eta_\eps$ implies
\begin{equation}\label{item_c}
|t - s|^2 + c_d \sum_{n\geq0} 2^{2n} \sum_{\ell\geq0} 2^{-2\ell} \sum_{B\in\mathscr P_\ell} \big|(\mu*\Nc_{\varrho})\big((2^nB)\cap B_n\big) - (\nu*\Nc_{\varrho})\big((2^nB)\cap B_n\big)\big| \ \leq \ \frac{\eps^2}{2}.
\end{equation}
As a matter of fact, recalling that $\Wc_2^{{(\varrho)}}(\mu,\nu)=\Wc_2(\mu*\Nc_{\varrho},\nu*\Nc_{\varrho})$, from inequality \eqref{UpperBound} we conclude that
\[
|t-s| + \Wc_2^{{(\varrho)}}(\mu,\nu) \ \leq \ \sqrt{2\,|t-s|^2 + 2\,\Wc_2^{{(\varrho)}}(\mu,\nu)^2} \ \leq \ \eps.
\]
Let us prove that \eqref{item_c} holds with $\eta_\eps$ given by
\begin{equation}\label{eta_eps}
\eta_\eps \ := \ \Big(\sqrt{8c_d+\eps^2/2} - \sqrt{8c_d}\Big)^2.
\end{equation}
To this end, denote by $d_{2,\varrho}((t,\mu),(s,\nu))$ the left-hand side of \eqref{item_c}, namely
\[
d_{2,\varrho}\big((t,\mu),(s,\nu)\big) \ := \ |t - s|^2 + c_d \sum_{n\geq0} 2^{2n} \sum_{\ell\geq0} 2^{-2\ell} \sum_{B\in\mathscr P_\ell} \big|\big((\mu*\Nc_{\varrho}) - (\nu*\Nc_{\varrho})\big)\big((2^nB)\cap B_n\big)\big|.
\]
Moreover, for every $n\geq0$, $\ell\geq0$, $B\in\mathscr P_\ell$, $\mu,\nu\in\Pc_2(\R^d)$, denote
\begin{align*}
a_{n,\ell}(\mu,\nu,B) \ &:= \ \Big(\sqrt{\big|\big((\mu*\Nc_{\varrho}) - (\nu*\Nc_{\varrho})\big)\big((2^nB)\cap B_n\big)\big|^2 + \delta_{n,\ell}^2} - \delta_{n,\ell}\Big), \\
b_{n,\ell}(\mu,\nu,B) \ &:= \ \big|\big((\mu*\Nc_{\varrho}) - (\nu*\Nc_{\varrho})\big)\big((2^nB)\cap B_n\big)\big|.
\end{align*}
Notice that
\begin{align}
\rho_{2,\varrho}\big((t,\mu),(s,\nu)\big) \ &= \ |t - s|^2 + c_d\sum_{n\geq0}2^{2n}\sum_{\ell\geq0} 2^{-2\ell} \sum_{B\in\mathscr P_\ell} a_{n,\ell}(\mu,\nu,B), \label{d_2} \\
d_{2,\varrho}\big((t,\mu),(s,\nu)\big) \ &= \ |t - s|^2 + c_d\sum_{n\geq0}2^{2n}\sum_{\ell\geq0} 2^{-2\ell} \sum_{B\in\mathscr P_\ell} b_{n,\ell}(\mu,\nu,B). \label{rho_2}
\end{align}
Now, consider $(t,\mu),(s,\nu)\in[0,T]\times\Pc_2(\R^d)$ such that $\rho_{2,\varrho}((t,\mu),(s,\nu))\leq\eta_\eps$, with $\eta_\eps$ given by \eqref{eta_eps}. Then
\begin{equation}\label{a_n_eta}
c_d\,2^{2(n-\ell)}\,a_{n,\ell}(\mu,\nu,B) \ \leq \ \rho_{2,\varrho}((t,\mu),(s,\nu)) \ \leq \ \eta_\eps.
\end{equation}
Since $a_{n,\ell}(\mu,\nu,B)=\sqrt{|b_{n,\ell}(\mu,\nu,B)|^2+\delta_{n,\ell}^2}-\delta_{n,\ell}$, we obtain
\[
b_{n,\ell}(\mu,\nu,B) \ = \ \sqrt{|a_{n,\ell}(\mu,\nu,B)|^2 + 2\,\delta_{n,\ell}\,a_{n,\ell}(\mu,\nu,B)} \ \leq \ a_{n,\ell}(\mu,\nu,B) + \sqrt{2\,\delta_{n,\ell}\,a_{n,\ell}(\mu,\nu,B)},
\]
where we have used the elementary inequality $\sqrt{x+y}\leq\sqrt{x}+\sqrt{y}$, valid for every $x,y\geq0$. Therefore, by \eqref{a_n_eta} we get
\[
b_{n,\ell}(\mu,\nu,B) \ \leq \ a_{n,\ell}(\mu,\nu,B) + \sqrt{\frac{2}{c_d}\eta_\eps}\,\sqrt{\delta_{n,\ell}\,2^{-2(n-\ell)}} \ = \ a_{n,\ell}(\mu,\nu,B) + \sqrt{\frac{2}{c_d}\eta_\eps}\,2^{-(3n+(d-1)\ell)}
\]
where the last equality follows from the fact that $\delta_{n,\ell}=2^{-(4n+2d\ell)}$. Hence, from \eqref{d_2} and \eqref{rho_2} we obtain
\[
d_{2,\varrho}\big((t,\mu),(s,\nu)\big) \ \leq \ \rho_{2,\varrho}\big((t,\mu),(s,\nu)\big) + c_d\sqrt{\frac{2}{c_d}\eta_\eps}\sum_{n\geq0}2^{2n}\sum_{\ell\geq0} 2^{-2\ell} \sum_{B\in\mathscr P_\ell} 2^{-(3n+(d-1)\ell)}
\]
Recalling that $\rho_{2,\varrho}((t,\mu),(s,\nu))\leq\eta_\eps$ and also that $\mathscr P_\ell$ contains $2^{d\ell}$ sets (see the statement of Lemma \ref{L:UpperBound}), we get
\begin{align}\label{Series_delta_n,ell}
d_{2,\varrho}\big((t,\mu),(s,\nu)\big) \ &\leq \ \eta_\eps + \sqrt{2\,c_d\,\eta_\eps}\sum_{n\geq0}\sum_{\ell\geq0} 2^{2n}\,2^{-2\ell}\,2^{d\ell}\,2^{-(3n+(d-1)\ell)} \\
&= \ \eta_\eps + \sqrt{2\,c_d\,\eta_\eps}\sum_{n\geq0}\sum_{\ell\geq0} 2^{-n}\,2^{-\ell} \ = \ \eta_\eps + 4\sqrt{2\,c_d\,\eta_\eps} \ = \ \frac{\eps^2}{2}, \notag
\end{align}
where the last equality follows from the definition of $\eta_\eps$.

\vspace{1mm}

\noindent\emph{Step III. The map $(t,\mu)\mapsto\rho_{2, \varrho}((t,\mu),(t_0,\mu_0))$ is in $C^{1,2}([0,T]\times\Pc_2(\R^d))$.} Recall from \eqref{zeta_varrho} that $\zeta_{\varrho}\colon\R^d\rightarrow\R$ denotes the density function of the multivariate normal distribution $\Nc_{\varrho}=\Nc(0,\varrho^2 I_d)$. Then, the map 
$\rho_{2,\varrho}$ can be written as
\begin{align*}
&\rho_{2,\varrho}\big((t,\mu),(t_0,\mu_0)\big) \ = \ |t - t_0|^2 \, + \\
&\hspace{1cm}+ \, c_d \sum_{n\geq0} 2^{2n} \sum_{\ell\geq0} 2^{-2\ell} \sum_{B\in\mathscr P_\ell} \bigg(\sqrt{\bigg|\int_{\R^d} \phi_n^B(y)\,\mu(dy) - \int_{\R^d} \phi_n^B(y)\,\mu_0(dy)\bigg|^2 + \delta_{n,\ell}^2} - \delta_{n,\ell}\bigg),
\end{align*}
where
\[
\phi_n^B(x) \ := \ \int_{(2^n B)\cap B_n} \zeta_{\varrho}(z-x)\,dz, \qquad \forall\,x\in\R^d.
\]
We split the rest of the proof of Step III into two substeps.

\vspace{1mm}

\noindent\emph{First-order derivatives.} By direct calculation, we have $\partial_t \rho_{2,\varrho}((t,\mu),(t_0,\mu_0))=2(t - t_0)$. Moreover, we claim that $\partial_\mu \rho_{2,\varrho}((t,\mu),(t_0,\mu_0))(x)$ is given by
\begin{align}\label{partial_mu_rho_2}
&c_d \sum_{n\geq0} 2^{2n} \sum_{\ell\geq0} 2^{-2\ell} \sum_{B\in\mathscr P_\ell} \frac{\int_{\R^d} \phi_n^B(y)\,(\mu-\mu_0)(dy)}{\sqrt{\big|\int_{\R^d} \phi_n^B(y)\,(\mu-\mu_0)(dy)\big|^2 + \delta_{n,\ell}^2}} \partial_x\phi_n^B(x) \notag \\
&= \ c_d \sum_{n\geq0} 2^{2n} \sum_{\ell\geq0} 2^{-2\ell} \sum_{B\in\mathscr P_\ell} \frac{(\mu*\Nc_{\varrho}-\mu_0*\Nc_{\varrho})((2^nB)\cap B_n)}{\sqrt{\big|(\mu*\Nc_{\varrho}-\mu_0*\Nc_{\varrho})((2^nB)\cap B_n)\big|^2 + \delta_{n,\ell}^2}} \partial_x\phi_n^B(x),
\end{align}
where $\partial_x\phi_n^B$ denotes the gradient of $\phi_n^B$. In order to prove \eqref{partial_mu_rho_2}, we denote, for every $n,\ell\geq0$, $\mu_0\in\Pc_2(\R^d)$, $B\in\mathscr P_\ell$,
\[
u_{n,\ell}^{B,\mu_0}(\mu) \ = \ \sqrt{\bigg|\int_{\R^d} \phi_n^B(y)\,\mu(dy) - \int_{\R^d} \phi_n^B(y)\,\mu_0(dy)\bigg|^2 + \delta_{n,\ell}^2} - \delta_{n,\ell}, \qquad \forall\,\mu\in\Pc_2(\R^d).
\]
Let us determine $\partial_\mu u_{n,\ell}^{B,\mu_0}$. To this end, let us consider the lifting $U_{n,\ell}^{B,\mu_0}\colon L^2(\Omega;\R^d)\rightarrow\R$ of $u_{n,\ell}^{B,\mu_0}$, given by $U_{n,\ell}^{B,\mu_0}(\xi)=u_{n,\ell}^{B,\mu_0}(\mu)$, for every $\xi\in L^2(\Omega;\R^d)$ having distribution $\mu$. Recall from the definition of $\partial_\mu u_{n,\ell}^{B,\mu_0}$ that, for every $\{\eta_k\}_k\subset L^2(\Omega;\R^d)$ such that $|\eta_k|_{L^2(\Omega;\R^d)}\rightarrow0$, it holds that
\begin{equation}\label{Limit_partial_mu}
\lim_{k\rightarrow\infty} \frac{\big|U_{n,\ell}^{B,\mu_0}(\xi+\eta_k) - U_{n,\ell}^{B,\mu_0}(\xi) - \E\big[\big\langle\partial_\mu u_{n,\ell}^{B,\mu_0}(\mu)(\xi),\eta_k\big\rangle\big]\big|}{|\eta_k|_{L^2(\Omega;\R^d)}} \ = \ 0,
\end{equation}
where $\xi\in L^2(\Omega;\R^d)$ has distribution $\mu$. Then, we have
\begin{equation}\label{partial_mu_u_n}
\partial_\mu u_{n,\ell}^{B,\mu_0}(\mu)(x) \ = \ \frac{\int_{\R^d} \phi_n^B(y)\,(\mu-\mu_0)(dy)}{\sqrt{\big|\int_{\R^d} \phi_n^B(y)\,(\mu-\mu_0)(dy)\big|^2 + \delta_{n,\ell}^2}} \partial_x\phi_n^B(x),
\end{equation}
for every $(\mu,x)\in\Pc_2(\R^d)\times\R^d$. Now, by \eqref{Limit_partial_mu} we see that \eqref{partial_mu_rho_2} follows if we prove that the series
\begin{equation}\label{series_partial_mu}
c_d \sum_{n\geq0} 2^{2n} \sum_{\ell\geq0} 2^{-2\ell} \sum_{B\in\mathscr P_\ell} \frac{\big|U_{n,\ell}^{B,\mu_0}(\xi+\eta_k) - U_{n,\ell}^{B,\mu_0}(\xi) - \E\big[\big\langle\partial_\mu u_{n,\ell}^{B,\mu_0}(\mu)(\xi),\eta_k\big\rangle\big]\big|}{|\eta_k|_{L^2(\Omega;\R^d)}}
\end{equation}
converges uniformly with respect to $k$. To this end, denote
\[
h(\lambda) \ := \ U_{n,\ell}^{B,\mu_0}(\xi+\lambda\eta_k), \qquad 0\leq\lambda\leq1.
\]
Since $h(1)=h(0)+\int_0^1 h'(\lambda)d\lambda$, we get
\[
U_{n,\ell}^{B,\mu_0}(\xi+\eta) \ = \ U_{n,\ell}^{B,\mu_0}(\xi) + \int_0^1 \E\big[\big\langle\partial_\mu u_{n,\ell}^{B,\mu_0}(\mu_{k,\lambda})(\xi+\lambda\eta_k),\eta_k\big\rangle\big]d\lambda,
\]
where $\mu_{k,\lambda}$ is the distribution of $\xi+\lambda\eta_k$. Then, \eqref{series_partial_mu} is bounded from above by
\begin{align}\label{series_partial_mu_2}
c_d \sum_{n\geq0} 2^{2n} \sum_{\ell\geq0} 2^{-2\ell} \sum_{B\in\mathscr P_\ell} \int_0^1 \big|\E\big[\big\langle\partial_\mu u_{n,\ell}^{B,\mu_0}(\mu_{k,\lambda})(\xi+\lambda\eta_k),\eta_k/|\eta_k|_{L^2(\Omega;\R^d)}\big\rangle\big]\big|d\lambda \\
+ \, c_d \sum_{n\geq0} 2^{2n} \sum_{\ell\geq0} 2^{-2\ell} \sum_{B\in\mathscr P_\ell} \big|\E\big[\big\langle\partial_\mu u_{n,\ell}^{B,\mu_0}(\mu)(\xi),\eta_k/|\eta_k|_{L^2(\Omega;\R^d)}\big\rangle\big]\big|. \notag
\end{align}
Notice that $\{\eta_k\}_{k\in\N}$ has uniformly integrable second moments (see for instance \cite[Theorem 4.12]{Kallenberg}), so that $\{\xi+\lambda\eta_k\}_{k\in\N,\lambda\in[0,1]}$ also has uniformly integrable second moments. Therefore, the two series in \eqref{series_partial_mu_2} converge uniformly if we prove that ($\nu$ denotes the distribution of $\eta$)
\[
c_d \sum_{n\geq0} 2^{2n} \sum_{\ell\geq0} 2^{-2\ell} \sum_{B\in\mathscr P_\ell} \E\big[\big|\partial_\mu u_{n,\ell}^{B,\mu_0}(\nu)(\eta)\big|\big]
\]
converges uniformly with respect to $\nu$, whenever $\nu$ belongs to a subset $\Mc$ of $\Pc_2(\R^d)$ with uniformly integrable second moments, namely
\begin{equation}\label{eta_UnifIntegr}
\lim_{M\rightarrow\infty}\sup_{\nu\in\Mc} \int_{|x|\geq M}|x|^2\,\nu(dx) \ = \ 0.
\end{equation}
Then, the claim follows if we prove that for every $\eps>0$, there exists $N=N(\eps,\Mc)\in\N$ such that, for every $\nu\in\Mc$, it holds that
\begin{equation}\label{series_partial_mu_claim}
c_d \sum_{n\geq N} 2^{2n} \sum_{\ell\geq0} 2^{-2\ell} \sum_{B\in\mathscr P_\ell} \E\big[\big|\partial_\mu u_{n,\ell}^{B,\mu_0}(\nu)(\eta)\big|\big] \ \leq \ \eps,
\end{equation}
with $\eta\in L^2(\Omega;\R^d)$ having distribution $\nu$. Firstly, from \eqref{partial_mu_u_n} notice that $|\partial_\mu u_{n,\ell}^{B,\mu_0}(\mu)(x)|\leq|\partial_x\phi_n^B(x)|$, $\forall\,x\in\R^d$. Moreover $\partial_x\phi_n^B(x)=\frac{1}{\varrho^2}\int_{(2^n B)\cap B_n}(z-x)\,\zeta_{\varrho}(z-x)dz$. Therefore, the series \eqref{series_partial_mu_claim} is bounded from above by
\begin{align*}
&\frac{c_d}{\varrho^2} \sum_{n\geq N} 2^{2n} \sum_{\ell\geq0} 2^{-2\ell} \sum_{B\in\mathscr P_\ell} \E\bigg[\int_{(2^n B)\cap B_n}|z-\eta|\,\zeta_{\varrho}(z-\eta)\,dz\bigg] \\
&= \ \frac{c_d}{\varrho^2} \sum_{n\geq N} 2^{2n} \sum_{\ell\geq0} 2^{-2\ell}\,\E\bigg[\int_{B_n}|z-\eta|\,\zeta_{\varrho}(z-\eta)\,dz\bigg] \ = \ \frac{4}{3}\,\frac{c_d}{\varrho^2} \sum_{n\geq N} 2^{2n}\,\E\bigg[\int_{B_n}|z-\eta|\,\zeta_{\varrho}(z-\eta)\,dz\bigg].
\end{align*}
Recalling that $B_n=(-2^n,2^n]^d\backslash(-2^{n-1},2^{n-1}]^d$, we obtain $2^{2n}\leq4|z|^2/d$, $\forall\,z\in B_n$, so that
\begin{align*}
&\frac{4}{3}\,\frac{c_d}{\varrho^2} \sum_{n\geq N} 2^{2n}\,\E\bigg[\int_{B_n}|z-\eta|\,\zeta_{\varrho}(z-\eta)\,dz\bigg] \leq \frac{16}{3d}\,\frac{c_d}{\varrho^2}\,\E\bigg[\int_{\R^d\backslash(-2^{N-1},2^{N-1}]^d}|z|^2\,|z-\eta|\,\zeta_{\varrho}(z-\eta)\,dz\bigg] \\
&\leq \frac{16}{3d}\,\frac{c_d}{\varrho^2}\,\E\bigg[\int_{|z|\geq2^{N-1}}|z|^2\,|z-\eta|\,\zeta_{\varrho}(z-\eta)\,dz\bigg] \\
&= \ \frac{16}{3d}\,\frac{c_d}{\varrho^2}\,\int_{\R^d}\bigg(\int_{\R^d}1_{\{|z|\geq2^{N-1}\}}|z|^2\,|z-x|\,\zeta_{\varrho}(z-x)\,dz\bigg)\nu(dx) \\
&= \ \frac{16}{3d}\,\frac{c_d}{\varrho^2}\,\int\!\!\!\int_{\R^d\times\R^d}1_{\{|y+x|\geq2^{N-1}\}}|y+x|^2\,|y|\,\zeta_{\varrho}(y)\,dy\,\nu(dx).
\end{align*}
Applying the elementary inequality \eqref{Ineq_Unif_Int} (with $x,x+z,z,\sqrt{h}$ replaced respectively by $y+x,y,x,2^{N-1}$), we obtain
\begin{align*}
&\frac{16}{3d}\,\frac{c_d}{\varrho^2}\,\int\!\!\!\int_{\R^d\times\R^d}1_{\{|y+x|\geq2^{N-1}\}}|y+x|^2\,|y|\,\zeta_{\varrho}(y)\,dy\,\nu(dx) \\
&\leq \ \frac{64}{3d}\,\frac{c_d}{\varrho^2}\int\!\!\!\int_{\R^d\times\R^d}\Big(1_{\{|y|\geq2^{N-3/2}\}}|y|^2 + 1_{\{|x|\geq2^{N-3/2}\}}|x|^2\Big)|y|\,\zeta_{\varrho}(y)\,dy\,\nu(dx) \\
&= \ \frac{64}{3d}\,\frac{c_d}{\varrho^2}\,\int_{|y|\geq2^{N-3/2}}|y|^3\,\zeta_{\varrho}(y)\,dy + \frac{64}{3d}\,\frac{c_d}{\varrho^2}\bigg(\int_{\R^d}|y|\,\zeta_{\varrho}(y)\,dy\bigg)\bigg(\int_{|x|\geq2^{N-3/2}}|x|^2\,\nu(dx)\bigg).
\end{align*}
Then, \eqref{series_partial_mu_claim} follows from \eqref{eta_UnifIntegr}.

\vspace{1mm}

\noindent\emph{Second-order derivatives.} We claim that $\partial_x\partial_\mu \rho_{2,\varrho}((t,\mu),(t_0,\mu_0))(x)$ is equal to
\begin{align}\label{partial_xmu_rho_2}
&c_d \sum_{n\geq0} 2^{2n} \sum_{\ell\geq0} 2^{-2\ell} \sum_{B\in\mathscr P_\ell} \frac{\int_{\R^d} \phi_n^B(y)\,(\mu-\mu_0)(dy)}{\sqrt{\big|\int_{\R^d} \phi_n^B(y)\,(\mu-\mu_0)(dy)\big|^2 + \delta_{n,\ell}^2}} \partial_{xx}^2\phi_n^B(x) \notag \\
&= \ c_d \sum_{n\geq0} 2^{2n} \sum_{\ell\geq0} 2^{-2\ell} \sum_{B\in\mathscr P_\ell} \frac{(\mu*\Nc_{\varrho}-\mu_0*\Nc_{\varrho})((2^nB)\cap B_n)}{\sqrt{\big|(\mu*\Nc_{\varrho}-\mu_0*\Nc_{\varrho})((2^nB)\cap B_n)\big|^2 + \delta_{n,\ell}^2}} \partial_{xx}^2\phi_n^B(x),
\end{align}
where $\partial_{xx}^2\phi_n^B$ denotes the Hessian matrix of $\phi_n^B$. Proceeding as in the previous substep, we see that this follows if we prove that the series ($|\partial_{xx}^2\phi_n^B(x)|$ stands for the Frobenius norm of the $d\times d$ matrix $\partial_{xx}^2\phi_n^B(x)$)
\[
c_d \sum_{n\geq0} 2^{2n} \sum_{\ell\geq0} 2^{-2\ell} \sum_{B\in\mathscr P_\ell} \frac{\big|\int_{\R^d} \phi_n^B(y)\,(\mu-\mu_0)(dy)\big|}{\sqrt{\big|\int_{\R^d} \phi_n^B(y)\,(\mu-\mu_0)(dy)\big|^2 + \delta_{n,\ell}^2}}|\partial_{xx}^2\phi_n^B(x)|
\]
converges uniformly with respect to $x$, whenever $x$ belongs to a bounded subset of $\R^d$. More precisely, we prove that for all $\eps>0$ and $M\in\N$, there exists $N=N(\eps,M)\in\N$ such that, for every $x\in\R^d$, with $|x|\leq M$, it holds that
\begin{equation}\label{series_partial_xmu_claim}
c_d \sum_{n\geq N} 2^{2n} \sum_{\ell\geq0} 2^{-2\ell} \sum_{B\in\mathscr P_\ell} \frac{\big|\int_{\R^d} \phi_n^B(y)\,(\mu-\mu_0)(dy)\big|}{\sqrt{\big|\int_{\R^d} \phi_n^B(y)\,(\mu-\mu_0)(dy)\big|^2 + \delta_{n,\ell}^2}}|\partial_{xx}^2\phi_n^B(x)| \ \leq \ \eps.
\end{equation}
We begin noting that the latter series is bounded from above by
\begin{equation}\label{series_partial_xmu}
c_d \sum_{n\geq N} 2^{2n} \sum_{\ell\geq0} 2^{-2\ell} \sum_{B\in\mathscr P_\ell} |\partial_{xx}^2\phi_n^B(x)|.
\end{equation}
We also observe that
\begin{equation}\label{2nd_deriv_phi}
\partial_{xx}^2\phi_n^B(x) \ = \  \frac{1}{\varrho^2}I_d\int_{(2^nB)\cap B_n}\zeta_{\varrho}(z-x)\,dz -  \frac{1}{\varrho^4}\int_{(2^nB)\cap B_n}(z-x)\otimes(z-x)\,\zeta_{\varrho}(z-x)\,dz,
\end{equation}
where $I_d$ denotes the identity matrix of order $d$, while $(z-x)\otimes(z-x)$ is the $d\times d$ matrix with $(i,j)$-component equal to $(z_i-x_i)(z_j-x_j)$. Then, \eqref{series_partial_xmu} is bounded from above by (notice that the Frobenius norms $|I_d|$ and $|(z-x)\otimes(z-x)|$ are given respectively by $\sqrt{d}$ and $|z-x|^2$, where $|z-x|$ denotes the Euclidean norm of $z-x$)
\begin{align*}
&c_d\sum_{n\geq N} 2^{2n} \sum_{\ell\geq0} 2^{-2\ell} \sum_{B\in\mathscr P_\ell} \int_{(2^nB)\cap B_n}\big(\sqrt{d}\,\varrho^{-2} + |z-x|^2 \varrho^{-4}\big)\,\zeta(z-x)\,dz \\
&= \ c_d \sum_{n\geq N} 2^{2n} \sum_{\ell\geq0} 2^{-2\ell} \int_{B_n}\big(\sqrt{d}\,\varrho^{-2} + |z-x|^2 \varrho^{-4}\big)\,\zeta(z-x)\,dz \\
&= \ \frac{4}{3} c_d \sum_{n\geq N} 2^{2n} \int_{B_n}\big(\sqrt{d}\, \varrho^{-2} + |z-x|^2 \varrho^{-4}\big)\,\zeta(z-x)\,dz.
\end{align*}
Recalling that $2^{2n}\leq4|z|^2/d$, $\forall\,z\in B_n$, we find
\begin{align}\label{series_partial_xmu_2}
&\frac{4}{3} c_d \sum_{n\geq N} 2^{2n} \int_{B_n}\big(\sqrt{d}\, \varrho^{-2} + |z-x|^2 \varrho^{-4}\big)\,\zeta(z-x)\,dz \notag \\
&\leq \ \frac{16}{3d} c_d \int_{\R^d\backslash(-2^{N-1},2^{N-1}]^d}|z|^2\,\big(\sqrt{d}\, \varrho^{-2} + |z-x|^2  \varrho^{-4}\big)\,\zeta(z-x)\,dz.
\end{align}
Since $|x|\leq M$, from the right-hand side of \eqref{series_partial_xmu_2} we see that \eqref{series_partial_xmu_claim} follows.

\vspace{1mm}

\noindent\emph{Step IV. Bounds.} The bound \eqref{bound_time_derivative} for the time derivative follows directly from the definition of $\rho_{2,\varrho}$. Let us now investigate the derivatives with respect to the measure. Recalling that $\partial_\mu\rho_{2,\varrho}((t,\mu),(t_0,\mu_0))(x)$ is given by \eqref{partial_mu_rho_2}, we obtain (notice that $\partial_x\phi_n^B(x)= \frac{1}{\varrho^2}\int_{(2^nB)\cap B_n}(z-x)\zeta_{\varrho}(z-x)dz$)
\begin{align*}
|\partial_\mu\rho_{2, \varrho}((t,\mu),(t_0,\mu_0))(x)| \ &\leq \ \frac{c_d}{ \varrho^2} \sum_{n\geq0} 2^{2n} \sum_{\ell\geq0} 2^{-2\ell} \sum_{B\in\mathscr P_\ell} \int_{(2^n B)\cap B_n}|z-x|\,\zeta_{\varrho}(z-x)\,dz \\
&= \frac{4}{3}\frac{c_d}{ \varrho^2} \sum_{n\geq0} 2^{2n} \int_{B_n}|z-x|\,\zeta_{ \varrho}(z-x)\,dz.
\end{align*}
Since $2^{2n}\leq4|z|^2/d$, $\forall\,z\in B_n$, we get
\begin{align*}
|\partial_\mu\rho_{2, \varrho}((t,\mu),(t_0,\mu_0))(x)| \ &\leq \ \frac{16}{3d}\frac{c_d}{ \varrho^2} \int_{\R^d} |z|^2\,|z-x|\,\zeta_{ \varrho}(z-x)\,dz \ = \ \frac{16}{3d}\frac{c_d}{ \varrho^2} \int_{\R^d} |y+x|^2\,|y|\,\zeta_{ \varrho}(y)\,dy \\
&\leq \ \frac{32}{3d}\frac{c_d}{ \varrho^2} \int_{\R^d} |y|^3\,\zeta_{ \varrho}(y)\,dy + \frac{32}{3d}\frac{c_d}{ \varrho^2}\,|x|^2\int_{\R^d} |y|\,\zeta_{ \varrho}(y)\,dy,
\end{align*}
 which gives \eqref{bounds_derivatives_1}.\\
From similar calculations, by \eqref{partial_xmu_rho_2} and \eqref{2nd_deriv_phi}, we deduce that $\partial_x\partial_\mu\rho_{2,\varrho}((t,\mu),(t_0,\mu_0))(x)$ is bounded by
\begin{align*}
&|\partial_x\partial_\mu\rho_{2,\varrho}((t,\mu),(t_0,\mu_0))(x)| \ \leq \ \frac{16}{3d}c_d \int_{\R^d}|z|^2\,\big(\sqrt{d}\,\textcolor{black}{\varrho^{-2}} + |z-x|^2\textcolor{black}{\varrho^{-4}}\big)\,\zeta_{\varrho}(z-x)\,dz \\
&= \ \frac{16}{3d}c_d \int_{\R^d}|y+x|^2\,\big(\sqrt{d}\,\textcolor{black}{\varrho^{-2}} + |y|^2\textcolor{black}{\varrho^{-4}}\big)\,\zeta_{\varrho}(y)\,dy \\
&\leq \ \frac{32}{3d}c_d \int_{\R^d}|y|^2\,\big(\sqrt{d}\,\textcolor{black}{\varrho^{-2}} + |y|^2\textcolor{black}{\varrho^{-4}}\big)\,\zeta_{\varrho}(y)\,dy + \frac{32}{3d}c_d\,|x|^2 \int_{\R^d}\big(\sqrt{d}\,\textcolor{black}{\varrho^{-2}} + |y|^2\textcolor{black}{\varrho^{-4}}\big)\,\zeta_{\varrho}(y)\,dy.
\end{align*}
We conclude that \eqref{bounds_derivatives_2} holds.
\end{proof}

\noindent We are in a position to state the smooth variational principle on $[0,T]\times\Pc_2(\R^d)$.

\begin{Theorem}\label{T:SmoothVarPrinc}
Fix \textcolor{black}{$\delta>0$} and let $G\colon[0,T]\times\Pc_2(\R^d)\rightarrow\R$ be upper semicontinuous and bounded from above. Given $\lambda>0$, let $(t_0,\mu_0)\in[0,T]\times\Pc_2(\R^d)$ be such that
\[
\sup G - \lambda \ \leq \ G(t_0,\mu_0).
\]
Then, there exist $(\tilde t,\tilde\mu)\in[0,T]\times\Pc_2(\R^d)$ and a sequence $\{(t_k,\mu_k)\}_{k\geq1}\subset[0,T]\times\Pc_2(\R^d)$ such that:
\begin{enumerate}[\upshape(i)]
\item $\rho_{2,\textcolor{black}{1/\delta}}((\tilde t,\tilde\mu),(t_k,\mu_k))\leq\frac{\lambda}{2^k\textcolor{black}{\delta^2}}$, for every $k\geq0$;
\item $G(t_0,\mu_0)\leq G(\tilde t,\tilde\mu)-\textcolor{black}{\delta^2}\varphi_{\textcolor{black}{\delta}}(\tilde t,\tilde\mu)$, with $\varphi_{\textcolor{black}{\delta}}\colon[0,T]\times\Pc_2(\R^d)\rightarrow[0,+\infty)$ given by
\[
\varphi_{\textcolor{black}{\delta}}(t,\mu) \ = \ \sum_{k=0}^{+\infty} \frac{1}{2^k}\,\rho_{2,\textcolor{black}{1/\delta}}\big((t,\mu),(t_k,\mu_k)\big), \qquad \forall\,(t,\mu)\in[0,T]\times\Pc_2(\R^d);
\]
\item $G(t,\mu) - \textcolor{black}{\delta^2}\,\varphi_{\textcolor{black}{\delta}}(t,\mu)<G(\tilde t,\tilde\mu) - \textcolor{black}{\delta^2}\,\varphi_{\textcolor{black}{\delta}}(\tilde t,\tilde\mu)$, for every $(t,\mu)\in([0,T]\times\Pc_2(\R^d))\backslash\{(\tilde t,\tilde\mu)\}$.
\end{enumerate}
Furthermore, the function $\varphi_{\textcolor{black}{\delta}}$ satisfies the following properties.
\begin{enumerate}[\upshape 1)]
\item $\varphi_{\textcolor{black}{\delta}}\in C^{1,2}([0,T]\times\Pc_2(\R^d))$;
\item its time derivative is bounded by $4T$;
\item its measure derivative \textcolor{black}{is bounded by
\begin{equation}\label{MeasDeriv_phi_Bound}
\big|\partial_\mu\varphi_\delta(t,\mu)(x)\big| \ = \ 2C_d\delta^2\bigg(\int_{\R^d} |y|^3\,\zeta_{1/\delta}(y)\,dy + |x|^2\int_{\R^d} |y|\,\zeta_{1/\delta}(y)\,dy\bigg),
\end{equation}
with the same constant $C_d$ as in \eqref{bounds_derivatives_1} and $\zeta_{1/\delta}$ given by \eqref{zeta_varrho} with $\varrho=1/\delta$;}
\item its second-order measure derivative \textcolor{black}{is bounded by
\begin{align}\label{MeasDeriv2nd_phi_Bound}
\big|\partial_x\partial_\mu\varphi_\delta(t,\mu)(x)\big| \ &= \ 2C_d\delta^2\bigg(\int_{\R^d}|y|^2\,\big(\sqrt{d} + |y|^2\delta^2\big)\,\zeta_{1/\delta}(y)\,dy \\
&\quad \ + |x|^2 \int_{\R^d}\big(\sqrt{d} + |y|^2\delta^2\big)\,\zeta_{1/\delta}(y)\,dy\bigg), \notag
\end{align}
with the same constant $C_d$ as in \eqref{bounds_derivatives_2} and $\zeta_{1/\delta}$ given by \eqref{zeta_varrho} with $\varrho=1/\delta$.}
\end{enumerate}
\end{Theorem}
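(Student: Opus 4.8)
The plan is to obtain the variational point $(\tilde t,\tilde\mu)$ together with the perturbing sequence $\{(t_k,\mu_k)\}_k$ as a direct application of the abstract Borwein--Preiss variational principle, and then to read off the regularity of $\varphi_\delta$ and the bounds on its derivatives by summing the termwise estimates already established in Lemma~\ref{L:SmoothGauge}.

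First I would invoke \cite[Theorem~2.5.2]{BZ05}, which applies directly to the complete metric space $\big([0,T]\times\Pc_2(\R^d),\ ((t,\mu),(s,\nu))\mapsto|t-s|+\Wc_2^{(1/\delta)}(\mu,\nu)\big)$ --- completeness being supplied by Lemma~\ref{L:GaussianWass} --- with lower semicontinuous, bounded-below function $-G$ (real-valued, lsc since $G$ is usc, bounded below since $G$ is bounded above), with gauge-type function $\rho_{2,1/\delta}$ (which is of gauge type by Lemma~\ref{L:SmoothGauge}(1)), with weights $\delta_k:=\delta^2 2^{-k}$, accuracy parameter $\lambda$, and starting point $(t_0,\mu_0)$; note that $\sup G-\lambda\le G(t_0,\mu_0)$ is exactly $-G(t_0,\mu_0)\le\inf(-G)+\lambda$. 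The principle then produces $(\tilde t,\tilde\mu)$ and a sequence $\{(t_k,\mu_k)\}_{k\ge1}$ which, together with $(t_0,\mu_0)$, forms $\{(t_k,\mu_k)\}_{k\ge0}$, such that $\rho_{2,1/\delta}((\tilde t,\tilde\mu),(t_k,\mu_k))\le\lambda/(\delta_0 2^k)=\lambda/(2^k\delta^2)$ for every $k\ge0$, that $-G(\tilde t,\tilde\mu)+\sum_{k\ge0}\delta_k\,\rho_{2,1/\delta}((\tilde t,\tilde\mu),(t_k,\mu_k))\le-G(t_0,\mu_0)$, and that the map $(t,\mu)\mapsto-G(t,\mu)+\sum_{k\ge0}\delta_k\,\rho_{2,1/\delta}((t,\mu),(t_k,\mu_k))$ has a strict global minimum at $(\tilde t,\tilde\mu)$. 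Since $\sum_{k\ge0}\delta_k\,\rho_{2,1/\delta}((t,\mu),(t_k,\mu_k))=\delta^2\varphi_\delta(t,\mu)$, and the latter series is finite for every $(t,\mu)$ because $\rho_{2,1/\delta}$ is continuous and $(t_k,\mu_k)\to(\tilde t,\tilde\mu)$, these three facts are precisely (i), (ii) and (iii), with $\varphi_\delta$ taking values in $[0,+\infty)$.

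It then remains to establish properties 1)--4) of $\varphi_\delta$. Each summand $(t,\mu)\mapsto\rho_{2,1/\delta}((t,\mu),(t_k,\mu_k))$ lies in $C^{1,2}([0,T]\times\Pc_2(\R^d))$ by Lemma~\ref{L:SmoothGauge}(2), and --- this being the key point --- the bounds \eqref{bound_time_derivative}, \eqref{bounds_derivatives_1}, \eqref{bounds_derivatives_2} (read with $\varrho=1/\delta$) are uniform in the base point $(t_k,\mu_k)$ and in $(t,\mu)$, and locally uniform in $x\in\R^d$. Hence the series obtained by differentiating term by term --- in $t$, then for the Fr\'echet derivative of the lifting, then for $\partial_\mu\varphi_\delta(t,\mu)(\cdot)$, then for $\partial_x\partial_\mu\varphi_\delta(t,\mu)(\cdot)$ --- are dominated by $\sum_{k\ge0}2^{-k}$ times a fixed bound, hence converge uniformly (respectively, uniformly on bounded subsets of $\R^d$). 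By the standard theorem on uniform limits of derivatives, applied at the level of liftings, $\varphi_\delta$ is $C^{1,2}$ with each of its derivatives given by the corresponding termwise-differentiated series, and the joint-continuity requirements in the definition of $C^{1,2}$ pass to the limit. Finally, multiplying \eqref{bound_time_derivative} by $2^{-k}$ and summing (using $\sum_{k\ge0}2^{-k}=2$) gives $|\partial_t\varphi_\delta|\le4T$; doing the same with \eqref{bounds_derivatives_1} and \eqref{bounds_derivatives_2}, and using $\varrho^{-2}=\delta^2$, $\varrho^{-4}=\delta^4$, $\zeta_{1/\delta}$ in place of $\zeta_\varrho$, yields the right-hand sides of \eqref{MeasDeriv_phi_Bound} and \eqref{MeasDeriv2nd_phi_Bound} (as upper bounds, in place of the displayed equalities).

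The only step that is not completely routine is the interchange of differentiation and summation in property 1): it works precisely because the derivative estimates in Lemma~\ref{L:SmoothGauge}(3) do not depend on the base point $(t_k,\mu_k)$ nor on $(t,\mu)$, so that the differentiated series are majorized by convergent geometric series; everything else is a transcription of \cite[Theorem~2.5.2]{BZ05} combined with elementary summation.
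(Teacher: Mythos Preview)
Your proposal is correct and follows essentially the same approach as the paper: apply \cite[Theorem~2.5.2]{BZ05} with weights $\delta_k=\delta^2/2^k$ and gauge $\rho_{2,1/\delta}$ to obtain (i)--(iii), then deduce 2)--4) from the uniform bounds \eqref{bound_time_derivative}--\eqref{bounds_derivatives_2} summed against $\sum_k 2^{-k}=2$. You supply more detail than the paper on why item~1) holds (termwise differentiation justified by the base-point-independent bounds), which the paper leaves implicit.
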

\begin{proof}[\textbf{Proof.}]
Items (i)-(ii)-(iii) follow directly from the Borwein-Preiss variational principle \cite[Theorem 2.5.2]{BZ05} applied on $[0,T]\times\Pc_2(\R^d)$ with gauge-type function $\rho_{2,\textcolor{black}{1/\delta}}$ (we only remark that, concerning the sequence $\{\delta_i\}_{i\geq0}$ appearing in the statement of \cite[Theorem 2.5.2]{BZ05}, here we take $\delta_i=\delta^2/2^i$, $i\geq0$). Finally, items 2)-3)-4) follow respectively from \eqref{bound_time_derivative}-\eqref{bounds_derivatives_1}-\eqref{bounds_derivatives_2}.
\end{proof}

\section{Comparison theorem and uniqueness}
\label{S:Comparison}

\begin{Theorem}[Comparison]\label{T:Comparison}
Let Assumptions \textup{\ref{AssA}}, \ref{AssB}, \ref{AssC}, \ref{AssD} hold. Consider bounded and continuous functions $u_1\colon[0,T]\times\Pc_2(\R^d)\rightarrow\R$ and $u_2\colon[0,T]\times\Pc_2(\R^d)\rightarrow\R$, with $u_1$ $($resp. $u_2$$)$ being a viscosity subsolution $($resp. supersolution$)$ to equation \eqref{HJB}. Then, it holds that $u_1\leq u_2$ on $[0,T]\times\Pc_2(\R^d)$.
\end{Theorem}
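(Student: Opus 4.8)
The plan is to prove only the inequality $u_1 \le v$, where $v$ is the value function of Theorem~\ref{T:Exist}; the symmetric inequality $v \le u_2$ is obtained by the same argument applied to supersolutions, and combining the two yields $u_1 \le v \le u_2$. Fix therefore a viscosity subsolution $u_1$. Arguing by contradiction, suppose $\sup_{[0,T]\times\Pc_2(\R^d)} (u_1 - v) = 2m > 0$. The first step is to replace $v$ by a \emph{smooth} approximation: using the non-degenerate approximation of Appendix~\ref{S:AppApprox} together with the propagation of chaos result (Theorem~\ref{T:PropagChaos}), one constructs, for small $\eps>0$, a family of functions $v^{n,\eps}$ built from the value functions of the cooperative $n$-player games. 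Crucially, $v^{n,\eps}$ is (up to a symmetrization/projection onto empirical measures) of class $C^{1,2}_2([0,T]\times\Pc_2(\R^d))$ (this is where Assumptions~\ref{AssB} and \ref{AssD} enter, via the interior Schauder-type estimates of \cite{Krylov80}), it is bounded with bounds on its derivatives uniform in $n$, it satisfies an approximate supersolution inequality for \eqref{HJB} with an error $o(1)$ as $n\to\infty$ and $\eps\to 0$, and $v^{n,\eps} \to v$ uniformly on compact-moment sets, so that $\sup(u_1 - v^{n,\eps}) \ge 2m - o(1) > m$ for $n$ large, $\eps$ small.

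The second step is to localize and generate a genuine maximum. Since $[0,T]\times\Pc_2(\R^d)$ is not locally compact, one cannot simply extract a maximizer; instead one invokes the smooth variational principle of Theorem~\ref{T:SmoothVarPrinc}. Apply it with $G := u_1 - v^{n,\eps}$ (upper semicontinuous and bounded from above), with $\lambda = m$ and with $\delta>0$ a small parameter to be sent to $0$ at the end. This produces a point $(\tilde t,\tilde\mu)$, a sequence $\{(t_k,\mu_k)\}_k$, and a function $\varphi_\delta \in C^{1,2}([0,T]\times\Pc_2(\R^d))$ such that $(t,\mu) \mapsto u_1(t,\mu) - v^{n,\eps}(t,\mu) - \delta^2\varphi_\delta(t,\mu)$ has a \emph{strict global maximum} at $(\tilde t,\tilde\mu)$, with $G(t_0,\mu_0) \le G(\tilde t,\tilde\mu) - \delta^2\varphi_\delta(\tilde t,\tilde\mu)$, and with explicit bounds \eqref{MeasDeriv_phi_Bound}--\eqref{MeasDeriv2nd_phi_Bound} on the derivatives of $\varphi_\delta$ (which are $O(\delta^2)$ times dimensional Gaussian integrals, hence small after integrating the $O(\delta^2)$ against the $\zeta_{1/\delta}$-moments — one checks these bounds tend to $0$ as $\delta\to 0$). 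Thus $\psi := v^{n,\eps} + \delta^2\varphi_\delta \in C^{1,2}_2([0,T]\times\Pc_2(\R^d))$ is an admissible test function touching $u_1$ from above at $(\tilde t,\tilde\mu)$.

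The third step is to exploit the viscosity subsolution property. If $\tilde t < T$, apply the definition of viscosity subsolution to $u_1$ with test function $\psi$ at $(\tilde t,\tilde\mu)$: we get the inequality $\partial_t\psi + \int \sup_a\{\cdots\}\,\tilde\mu(dx) \ge 0$ at $(\tilde t,\tilde\mu)$. Now expand $\psi = v^{n,\eps} + \delta^2\varphi_\delta$: the $v^{n,\eps}$-part contributes (approximately) the supersolution inequality $\partial_t v^{n,\eps} + \int\sup_a\{\cdots\} \le o(1)$, because $v^{n,\eps}$ is an $o(1)$-approximate classical supersolution; the $\delta^2\varphi_\delta$-part contributes error terms bounded using Assumption~\ref{AssA} (boundedness of $b,\sigma,f$) times the $\varphi_\delta$-derivative bounds, hence $O(\delta^2\times(\text{Gaussian moments}))$, which $\to 0$ as $\delta \to 0$; and the $\partial_t\varphi_\delta$ term is $O(\delta^2)$ by item~2). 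Subtracting, one is left with $0 \le o_n(1) + o_\eps(1) + o_\delta(1)$. On the other hand, from item~(ii) of Theorem~\ref{T:SmoothVarPrinc} together with $\sup G - \lambda \le G(t_0,\mu_0)$ one reads off $\sup(u_1 - v^{n,\eps} - \delta^2\varphi_\delta) \ge \sup G - \lambda = \sup(u_1 - v^{n,\eps}) - m \ge m - o(1) > 0$, i.e. $G(\tilde t,\tilde\mu) - \delta^2\varphi_\delta(\tilde t,\tilde\mu) \ge m - o(1)$; this positive lower bound is incompatible with the chain of inequalities once $n$ is large, $\eps,\delta$ small — contradiction. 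If instead $\tilde t = T$, use the terminal condition $u_1(T,\cdot) \le \int g\,d\mu = v(T,\cdot)$ (and $v^{n,\eps}(T,\cdot) \to v(T,\cdot)$) to directly contradict $G(\tilde t,\tilde\mu) - \delta^2\varphi_\delta(\tilde t,\tilde\mu) \ge m - o(1) > 0$, after possibly adding a standard linear-in-time penalization $-\kappa(T-t)$ to push the maximum into the interior. Sending $n\to\infty$, then $\eps\to0$, then $\delta\to0$ (in that order) forces $m \le 0$, the desired contradiction.

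\textbf{Main obstacle.} The delicate point is the first step: producing a finite-dimensional, genuinely $C^{1,2}_2$ smoothing $v^{n,\eps}$ of $v$ with derivative bounds \emph{uniform in $n$} and with a quantitative approximate-supersolution property, so that the errors $o_n(1)$ truly vanish. This requires the regularity theory for uniformly parabolic Bellman equations (hence the non-degeneracy introduced by the $\eps$-regularization and Assumptions~\ref{AssB}, \ref{AssD}) to control second derivatives, combined with the propagation of chaos rate (Theorem~\ref{T:PropagChaos}, relying on Assumptions~\ref{AssA}, \ref{AssC}) to control the gap between the $n$-player value and $v$. Matching the three small parameters $n,\eps,\delta$ in the right order, and ensuring the $\varphi_\delta$-error terms genuinely vanish despite the $\varrho^{-4}=\delta^4$ factors in \eqref{bounds_derivatives_2} being multiplied only by $\delta^2$ (one must verify $\delta^2\int |y|^2(\sqrt d + |y|^2\delta^2)\zeta_{1/\delta}(y)\,dy \to 0$, which holds since $\int|y|^k\zeta_{1/\delta}\,dy = O(\delta^{-k})$ gives terms $O(\delta)$ and $O(1)\cdot\delta^2\cdot\delta^{-4}\cdot\delta^2$... so in fact one needs the bound to be reorganized — this is exactly the balancing that Theorem~\ref{T:SmoothVarPrinc} is engineered to make work), is the technical heart of the argument.
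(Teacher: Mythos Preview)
Your overall architecture is right --- smooth finite-dimensional approximation of the value function plus the smooth variational principle --- but there are two genuine gaps.

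\textbf{Gap 1: the interior case gives no contradiction as written.} In your Step~3 you derive from the viscosity inequality and the approximate supersolution property of $v^{n,\eps}$ the relation ``$0\le o_n(1)+o_\eps(1)+o_\delta(1)$'', and separately that $G(\tilde t,\tilde\mu)-\delta^2\varphi_\delta(\tilde t,\tilde\mu)\ge m-o(1)$. These two facts are \emph{not} incompatible: the first is trivially true, and the second nowhere appears in your PDE chain. The missing mechanism is a zeroth-order term. The paper introduces it via the exponential change $\check u_1(t,\mu):=e^{t-t_0}u_1(t,\mu)$ (and similarly $\check v_{\eps,n,m}$). Then $\check u_1$ is a subsolution of an equation with $-\check u_1$ on the right, and $\check v_{\eps,n,m}$ solves its equation with $-\check v_{\eps,n,m}$ on the right; testing the subsolution property with $\check v_{\eps,n,m}+\delta^2\varphi_\delta$ and subtracting the equation for $\check v_{\eps,n,m}$ yields directly
\[
(\check u_1-\check v_{\eps,n,m})(\tilde t,\tilde\mu)\ \le\ (\text{error terms}),
\]
and item~(ii) of Theorem~\ref{T:SmoothVarPrinc} gives $(u_1-v_{\eps,n,m})(t_0,\mu_0)\le(\check u_1-\check v_{\eps,n,m})(\tilde t,\tilde\mu)$. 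Now the positivity at $(t_0,\mu_0)$ genuinely conflicts with the error bound. Without this (or an equivalent device such as a proper $-\kappa(T-t)$ penalty carried through the interior argument, not just the terminal one), your Step~3 does not close.

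\textbf{Gap 2: the supersolution case is \emph{not} symmetric.} The approximation $v_{\eps,n,m}$ solves an equation whose Hamiltonian has the form $\int_{\R^{dn}}\sum_{i}\sup_{a_i}\{\cdots\}$, whereas the Master Bellman Hamiltonian has $\int_{\R^d}\sup_a\{\cdots\}$. The crucial inequality used in the subsolution step (paper's display \eqref{ComparisonIneqStepIII}) is
\[
\int_{\R^d}\sup_{a}\{\cdots\}\,\tilde\mu(dx)\ \le\ \int_{\R^{dn}}\sum_{i=1}^n\sup_{a_i}\{\cdots\}\,\tilde\mu^{\otimes n}(d\bar x),
\]
obtained by pushing $\sup$ inside the integral. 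For a supersolution you would need the reverse inequality, which fails. Consequently the paper treats $v_0\le u_2$ by a completely different route: it fixes a control $\mathfrak a\in\Mc_t$ (making the equation \emph{linear}, no supremum), builds corresponding linear $n$-particle approximations $v_{n,m}^{s_0}$, and proves that any viscosity supersolution satisfies the dynamic-programming-type inequality $u_2(t,\mu)\ge \E\big[\int_t^s f\,dr\big]+u_2(s,\P_{X_s})$ for constant (and then step) controls; taking the supremum over controls yields $u_2\ge v_0$. Your ``same argument applied to supersolutions'' does not work here.

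A secondary point: the paper sends $\eps\to0$ first (so that the term $\tfrac12\eps^2\,n\,d\,C_{n,m}$ coming from the non-degeneracy disappears before $n,m$ move), then $m\to\infty$, then $n\to\infty$, then $\delta\to0$. Your order $n\to\infty$ first would force you to control $C_{n,m}$ uniformly in $n$, which the available second-derivative estimate \eqref{Estimate_2ndDeriv} does not give.
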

\begin{proof}[\textbf{Proof.}]
Let $v_0$ be the map defined by \eqref{v_eps} with $\eps=0$ (see also Remark \ref{R:v_0}). Our aim is to prove that $u_1\leq v_0$ and $v_0\leq u_2$ on $[0,T]\times\Pc_2(\R^d)$, from which the claim follows.

\vspace{1mm}

\noindent\textsc{Step I.} \emph{Proof of $u_1\leq v_0$.} By contradiction, we suppose that there exists $(t_0,\tilde\mu_0)\in[0,T]\times\Pc_2(\R^d)$ such that
\[
(u_1 - v_0)(t_0,\tilde\mu_0) \ > \ 0.
\]
Since both $u_1$ and $v_0$ are continuous, we can find $q>2$ and $\mu_0\in\Pc_q(\R^d)$ such that
\begin{equation}\label{Contradiction}
(u_1 - v_0)(t_0,\mu_0) \ > \ 0.
\end{equation}
As a matter of fact, let $\xi\in L^2(\Omega,\Fc,\P;\R^d)$ be such that $\P_\xi=\tilde\mu_0$. For every $k\in\N$, let $\mu_0^k\in\Pc_2(\R^d)$ be the distribution of $\xi_k:=\xi\,1_{\{|\xi|\leq k\}}$. We see that $\mu_0^k\in\Pc_q(\R^d)$, for any $q\geq1$. Moreover, it holds that
\[
\Wc_2(\mu_0^k,\tilde\mu_0)^2 \ \leq \ \E\big[|\xi_k - \xi|^2\big] \ = \ \int_{|x|>k} |x|^2\,\tilde\mu_0(dx) \ \overset{k\rightarrow+\infty}{\longrightarrow} \ 0,
\]
from which we deduce that \eqref{Contradiction} holds with $\mu_0:=\mu_0^k$ for some $k$ large enough.\\
We split the rest of the proof of \textsc{Step I} into four substeps.

\vspace{1mm}

\noindent\textsc{Substep I-a}. For every $\eps>0$ and $n,m\in\N$, let $v_{\eps,n,m}$ be the map given by \eqref{v_eps,n}. Now, we define $\check u_1(t,\mu):=\text{e}^{t-t_0} u_1(t,\mu)$, for every $(t,\mu)\in[0,T]\times\Pc_2(\R^d)$, and similarly $\check v_{\eps,n,m}$, $\check f_{n,m}^i$, $\check f$
from $v_{\eps,n,m}$, $f_{n,m}^i$, $f$, respectively. We also define $\check g(x,\mu):=\text{e}^{T-t_0} g(x,\mu)$ and $\check g_{n,m}^i(x,\mu):=\text{e}^{T-t_0} g_{n,m}^i(x,\mu)$, for every $(x,\mu)\in\R^d\times\Pc_2(\R^d)$. We observe that $\check u_1$ is a viscosity subsolution of the following equation:
\begin{equation}\label{HJB_exp}
\begin{cases}
\vspace{2mm}
\displaystyle\partial_t \check u_1(t,\mu) + \int_{\R^d}\sup_{a\in A}\bigg\{\check f(t,x,\mu,a) + \dfrac{1}{2}\text{tr}\big[(\sigma\sigma\trans)(t,x,a)\partial_x\partial_\mu \check u_1(t,\mu)(x)\big] \\
\displaystyle\vspace{2mm}+\,\big\langle b(t,x,\mu,a),\partial_\mu \check u_1(t,\mu)(x)\rangle\bigg\}\mu(dx) = \check u_1(t,\mu), &\hspace{-3cm}(t,\mu)\in[0,T)\times \Pc_2(\R^d), \\
\displaystyle \check u_1(T,\mu) = \int_{\R^d} \check g(x,\mu)\mu(dx), &\hspace{-3cm}\,\mu\in\Pc_2(\R^d).
\end{cases}
\end{equation}
Moreover, by Theorem \ref{T:SmoothApprox} we deduce that $\check v_{\eps,n,m}$ solves the following equation:
\begin{equation}\label{HJB_eps,n_exp}
\hspace{-6mm}\begin{cases}
\vspace{2mm}
\displaystyle\partial_t \check v_{\eps,n,m}(t,\mu) - \check v_{\eps,n,m}(t,\mu) + \int_{\R^{dn}}\sum_{i=1}^n\sup_{a_i\in A}\bigg\{\langle b_{n,m}^i(t,x_1,\ldots,x_n,a_i),\partial_{x_i}\check{\bar v}_{\eps,n,m}(t,\bar x)\rangle \\
\displaystyle\vspace{2mm}+\,\frac{1}{2}\textup{tr}\Big[\big((\sigma\sigma\trans)(t,x_i,a_i) + \eps^2\big)\partial_{x_ix_i}^2\check{\bar v}_{\eps,n,m}(t,\bar x)\Big] \\
\displaystyle\vspace{2mm}+\,\frac{1}{n}\check f_{n,m}^i(t,x_1,\ldots,x_n,a_i)\bigg\}\mu(dx_1)\otimes\cdots\otimes\mu(dx_n) = 0, &\hspace{-4.5cm}(t,\mu)\in[0,T)\times \Pc_2(\R^d), \\
\displaystyle \check v_{\eps,n,m}(T,\mu) = \frac{1}{n}\sum_{i=1}^n \int_{\R^{dn}} \check g_{n,m}^i(\bar x)\,\mu(dx_1)\otimes\cdots\otimes\mu(dx_n), &\hspace{-2.4cm}\,\mu\in\Pc_2(\R^d),
\end{cases}
\end{equation}
where $\check{\bar v}_{\eps,n,m}(t,\bar x):=\textup{e}^{t-t_0}\bar v_{\eps,n,m}(t,\bar x)$, for every $(t,\bar x)\in[0,T]\times\R^{dn}$, $\bar x=(x_1,\ldots,x_n)$, with $\bar v_{\eps,n,m}$ being the same function appearing in Theorem \ref{T:SmoothApprox}.\\
Finally, notice that, by Assumption \ref{AssA}-(iii), $v_{\eps,n,m}$ is bounded by a constant independent of $\eps,n,m$. Since also $u_1$ is bounded, there exists $\lambda\geq0$, independent of $\eps,n,m$, satisfying
\begin{equation}\label{lambda}
\sup (\check u_1 - \check v_{\eps,n,m}) \ \leq \ (\check u_1 - \check v_{\eps,n,m})(t_0,\mu_0) + \lambda.
\end{equation}

\vspace{1mm}

\noindent\textsc{Substep I-b.} Since $\check u_1 - \check v_{\eps,n,m}$ is bounded and continuous, by \eqref{lambda} and Theorem \ref{T:SmoothVarPrinc} with $G=\check u_1-\check v_{\eps,n,m}$, we obtain that for every $\delta>0$ there exist $\{(t_k,\mu_k)\}_{k\geq1}\subset[0,T]\times\Pc_2(\R^d)$, converging to some $(\tilde t,\tilde\mu)\in[0,T]\times\Pc_2(\R^d)$, and $\varphi_\delta$ such that items (i)-(ii)-(iii) and 1)-2)-3) of Theorem \ref{T:SmoothVarPrinc} hold.\\
Now, recall from the proof of Lemma \ref{L:SmoothGauge}, and in particular from \eqref{item_c}-\eqref{eta_eps}, that for all $(t,\mu),(s,\nu)\in[0,T]\times\Pc_2(\R^d)$ satisfying $\rho_{2,1/\delta}((t,\mu),(s,\nu))\leq\eta_\epsilon$, with $\eta_\epsilon$ as in \eqref{eta_eps}, namely
\[
\eta_\epsilon \ := \ \Big(\sqrt{8c_d+\epsilon^2/2} - \sqrt{8c_d}\Big)^2,
\]
it holds that $\Wc_2^{(1/\delta)}(\mu,\nu)\leq\epsilon$. Since by item (i) of Theorem \ref{T:SmoothVarPrinc} we have $\rho_{2,1/\delta}((\tilde t,\tilde\mu),(t_0,\mu_0))\leq\lambda/\delta^2$, we get
\[
\Wc_2^{(1/\delta)}(\tilde\mu,\mu_0) \ \leq \ \frac{1}{\delta}\sqrt{2\lambda+8\delta\sqrt{2c_d\lambda}}.
\]
Finally, by \cite[Lemma 1]{NGK21} we obtain
\begin{equation}\label{delta_x}
\Wc_2(\tilde\mu,\mu_0) \ \leq \ \Wc_2^{(1/\delta)}(\tilde\mu,\mu_0) + \frac{2}{\delta}\sqrt{d+2} \ \leq \ \frac{1}{\delta}\bigg(\sqrt{2\lambda+8\delta\sqrt{2c_d\lambda}} + 2\sqrt{d+2}\bigg).
\end{equation}

\vspace{1mm}

\noindent\textsc{Substep I-c}. Let us prove that $\tilde t<T$. If $\tilde t=T$, from item (ii) of Theorem \ref{T:SmoothVarPrinc} we have
\[
(u_1 - v_{\eps,n,m})(t_0,\mu_0) \ = \ (\check u_1 - \check v_{\eps,n,m})(t_0,\mu_0) \ \leq \ \big(\check u_1 - \check v_{\eps,n,m}-\delta^2\varphi_\delta\big)(T,\tilde\mu) \ \leq \ (\check u_1 - \check v_{\eps,n,m})(T,\tilde\mu),
\]
where the last inequality follows from $\varphi_\delta\geq0$. Hence
\begin{align*}
&(u_1 - v_{\eps,n,m})(t_0,\mu_0) \\
&\leq \ \textup{e}^{T-t_0}\int_{\R^d}g(x,\tilde\mu)\,\tilde\mu(dx) - \frac{\textup{e}^{T-t_0}}{n}\sum_{i=1}^n \int_{\R^{dn}} g_{n,m}^i(x_1,\ldots,x_n)\,\tilde\mu(dx_1)\otimes\cdots\otimes\tilde\mu(dx_n) \\
&= \ \frac{\textup{e}^{T-t_0}}{n}\sum_{i=1}^n\int_{\R^d}\big(g(x_i,\tilde\mu) - g_{n,m}^i(x_1,\ldots,x_n)\big)\tilde\mu(dx_1)\otimes\cdots\otimes\tilde\mu(dx_n) \\
&= \ \frac{\textup{e}^{T-t_0}}{n}\sum_{i=1}^n\int_{\R^d}\big(g(x_i,\tilde\mu) - g(x_i,\widehat\mu^{n,\bar x})\big)\,\tilde\mu(dx_1)\otimes\cdots\otimes\tilde\mu(dx_n) \\
&\quad \ + \frac{\textup{e}^{T-t_0}}{n}\sum_{i=1}^n\int_{\R^d}\big(g(x_i,\widehat\mu^{n,\bar x}) - g_{n,m}^i(x_1,\ldots,x_n)\big)\,\tilde\mu(dx_1)\otimes\cdots\otimes\tilde\mu(dx_n),
\end{align*}
where $\widehat\mu^{n,\bar x}$ is given by
\[
\widehat\mu^{n,\bar x} \ := \ \frac{1}{n} \sum_{j=1}^n \delta_{x_j}
\]
for every $n\in\N$, $\bar x=(x_1,\ldots,x_n)\in\R^{dn}$ and $x_1,\ldots,x_n\in\R^d$. Then, from the Lipschitz property of $g$ we obtain
\begin{align}
(u_1 - v_{\eps,n,m})(t_0,\mu_0) \ &\leq \ \textup{e}^{T-t_0}\int_{\R^d}K\Wc_2\big(\tilde\mu,\widehat\mu^{n,\bar x}\big)\,\tilde\mu(dx_1)\otimes\cdots\otimes\tilde\mu(dx_n) \label{ProofTermCondition} \\
&\quad \ +
\frac{\textup{e}^{T-t_0}}{n}
\sum_{i=1}^n\int_{\R^d}\big(g(x_i,\widehat\mu^{n,\bar x}) - g_{n,m}^i(x_1,\ldots,x_n)\big)\,\tilde\mu(dx_1)
\otimes\cdots\otimes\tilde\mu(dx_n),
\notag
\end{align}
From \cite[Theorem 1]{FG15} we have that
\begin{align*}
&\int_{\R^{dn}}\Wc_2(\tilde\mu,\widehat\mu^{n,\bar x})\,\tilde\mu(dx_1)\otimes\cdots\otimes\tilde\mu(dx_n) \\
&\leq \ c_d \bigg(\int_{\R^d}|x|^q\,\tilde\mu(dx)\bigg)^{1/q}
\begin{cases}
\frac{1}{\sqrt{n}} + \frac{1}{n^{(q-1)/q}}, \qquad &\text{if }d=1\text{ and }q\neq2, \\
\frac{1}{\sqrt{n}}\log(1+n)	+ \frac{1}{n^{(q-1)/q}}, \qquad &\text{if }d=2\text{ and }q\neq2, \\
\frac{1}{n^{1/d}}	+ \frac{1}{n^{(q-1)/q}}, \qquad &\text{if }d>2\text{ and }q\neq\frac{d}{d-1},
\end{cases}
\end{align*}
with $q\in(1,2]$ and for some constant $c_d\geq0$, depending only on $d$. So, in particular, there exists some $q\in(1,2)$ such that
\begin{equation}\label{EmpirDistrLimit}
\int_{\R^{dn}}\Wc_2(\tilde\mu,\widehat\mu^{n,\bar x})\,\tilde\mu(dx_1)\otimes\cdots\otimes\tilde\mu(dx_n) \ \leq \ c_d \bigg(\int_{\R^d}|x|^q\,\tilde\mu(dx)\bigg)^{1/q}\,h_n
\end{equation}
for some sequence $\{h_n\}_n$ satisfying $\lim_{n\rightarrow+\infty}h_n=0$.
\\
Hence, plugging \eqref{EmpirDistrLimit} and \eqref{eq:limnewg} into \eqref{ProofTermCondition}, we get
\begin{align*}
(u_1 - v_{\eps,n,m})(t_0,\mu_0) \ &\leq \ c_d K\textup{e}^{T-t_0} \bigg(\int_{\R^d}|x|^q\,\tilde\mu(dx)\bigg)^{1/q}\,h_n \\
&\quad \ + K\textup{e}^{T-t_0}m^{nd}\int_{\R^{dn}}
\bigg(\frac{2}{n}\sum_{i=1}^n|y_i| 
\bigg)\prod_{j=1}^{n}\Phi(my_j)dy_j.
\end{align*}
Now, recalling that $q\in(1,2)$, we get (denoting by $\delta_0$ the Dirac measure centered at zero)
\begin{align}\label{delta_x_2}
&\bigg(\int_{\R^d}|x|^q\,\tilde\mu(dx)\bigg)^{1/q} \ \leq \ \bigg(\int_{\R^d}|x|^2\,\tilde\mu(dx)\bigg)^{1/2} \ = \ \Wc_2(\tilde\mu,\delta_0) \notag \\
&\leq \ \Wc_2(\tilde\mu,\mu_0) + \Wc_2(\mu_0,\delta_0) \ \leq \ \frac{1}{\delta}\bigg(\sqrt{2\lambda+8\delta\sqrt{2c_d\lambda}} + 2\sqrt{d+2}\bigg) + \Wc_2(\mu_0,\delta_0),
\end{align}
where the last inequality follows from \eqref{delta_x}. Hence
\begin{align*}
&(u_1 - v_{\eps,n,m})(t_0,\mu_0) \\
&\leq \ c_d K\textup{e}^{T-t_0} \bigg(\frac{1}{\delta}\bigg(\sqrt{2\lambda+8\delta\sqrt{2c_d\lambda}} + 2\sqrt{d+2}\bigg) + \Wc_2(\mu_0,\delta_0)\bigg)h_n \\
&\quad \ + K\textup{e}^{T-t_0} m^{nd}\int_{\R^{dn}}\bigg(\frac{2}{n}\sum_{i=1}^n|y_i|
\bigg)\prod_{j=1}^{n}\Phi(my_j)dy_j.
\end{align*}
Sending $m\rightarrow+\infty$, then $n\rightarrow+\infty$, and finally $\eps\rightarrow0^+$, we end up, using Theorem \ref{T:PropagChaos} and Lemma \ref{L:eps}, with
\[
(u_1 - v_0)(t_0,\mu_0) \ \leq \ 0,
\]
which gives a contradiction to \eqref{Contradiction}.

\vspace{1mm}

\noindent\textsc{Substep I-d}. From item (iii) of Theorem \ref{T:SmoothVarPrinc} and the fact that $\check u_1$ is a viscosity subsolution of \eqref{HJB_exp}, we find
\begin{align*}
&- \partial_t (\check v_{\eps,n,m}+\delta^2\varphi_\delta)(\tilde t,\tilde\mu) - \int_{\R^d}\sup_{a\in A}\bigg\{\big\langle b(\tilde t,x,\tilde\mu,a),\partial_\mu(\check v_{\eps,n,m}+\delta^2\varphi_\delta)(\tilde t,\tilde\mu)(x)\big\rangle \\
&+ \frac{1}{2}\text{tr}\big[(\sigma\sigma\trans)(\tilde t,x,a)\partial_x\partial_\mu(\check v_{\eps,n,m}+\delta^2\varphi_\delta)(\tilde t,\tilde\mu)(x)\big] + \check f(\tilde t,x,\tilde\mu,a)\bigg\}\tilde\mu(dx) + \check u_1(\tilde t,\tilde\mu) \ \leq \ 0.
\end{align*}
Then
\begin{align*}
&\check u_1(\tilde t,\tilde\mu) \ \leq \ \delta^2\,\partial_t \varphi_\delta(\tilde t,\tilde\mu) + \delta^2\int_{\R^d}\sup_{a\in A}\bigg\{\big\langle b(\tilde t,x,\tilde\mu,a),\partial_\mu \varphi_\delta(\tilde t,\tilde\mu)(x)\big\rangle \\
&+ \frac{1}{2}\text{tr}\big[(\sigma\sigma\trans)(\tilde t,x,a)\partial_x\partial_\mu\varphi_\delta(\tilde t,\tilde\mu)(x)\big]\bigg\}\tilde\mu(dx) + \int_{\R^d}\sup_{a\in A}\bigg\{\big\langle b(\tilde t,x,\tilde\mu,a),\partial_\mu \check v_{\eps,n,m}(\tilde t,\tilde\mu)(x)\big\rangle \\
&+ \frac{1}{2}\text{tr}\big[(\sigma\sigma\trans)(\tilde t,x,a)\partial_x\partial_\mu\check v_{\eps,n,m}(\tilde t,\tilde\mu)(x)\big] + \check f(\tilde t,x,\tilde\mu,a)\bigg\}\tilde\mu(dx) + \partial_t \check v_{\eps,n,m}(\tilde t,\tilde\mu).
\end{align*}
Using that the $\check v_{\eps,n,m}$ satisfies equation \eqref{HJB_eps,n_exp}, the above implies
\begin{align}\label{ComparisonIneqStepI}
&(\check u_1 - \check v_{\eps,n,m})(\tilde t,\tilde\mu) \ \leq \ \delta^2\,\partial_t \varphi_\delta(\tilde t,\tilde\mu) \\
&+ \delta^2\int_{\R^d}\sup_{a\in A}\bigg\{\big\langle b(\tilde t,x,\tilde\mu,a),\partial_\mu \varphi_\delta(\tilde t,\tilde\mu)(x)\big\rangle + \frac{1}{2}\text{tr}\big[(\sigma\sigma\trans)(\tilde t,x,a)\partial_x\partial_\mu\varphi_\delta(\tilde t,\tilde\mu)(x)\big]\bigg\}\tilde\mu(dx) \notag \\
&+ \int_{\R^d}\sup_{a\in A}\bigg\{\big\langle b(\tilde t,x,\tilde\mu,a),\partial_\mu \check v_{\eps,n,m}(\tilde t,\tilde\mu)(x)\big\rangle + \frac{1}{2}\text{tr}\big[(\sigma\sigma\trans)(\tilde t,x,a)\partial_x\partial_\mu\check v_{\eps,n,m}(\tilde t,\tilde\mu)(x)\big] \notag \\
&+ \check f(\tilde t,x,\tilde\mu,a)\bigg\}\tilde\mu(dx) - \int_{\R^{dn}}\sum_{i=1}^n\sup_{a_i\in A}\bigg\{\frac{1}{n}\check f_{n,m}^i(\tilde t,\bar x,a_i) + \langle b_{n,m}^i(\tilde t,\bar x,a_i),\partial_{x_i}\check{\bar v}_{\eps,n,m}(\tilde t,\bar x)\rangle \notag \\
&+ \frac{1}{2}\textup{tr}\Big[\big(\sigma\sigma\trans)(\tilde t,x_i,a_i) + \eps^2\big)\partial_{x_ix_i}^2\check{\bar v}_{\eps,n,m}(\tilde t,\bar x)\Big]\bigg\}\tilde\mu(dx_1)\otimes\cdots\otimes\tilde\mu(dx_n), \notag
\end{align}
with $\bar x=(x_1,\ldots,x_n)\in\R^{dn}$.
Now, recalling item (ii) of Theorem \ref{T:SmoothVarPrinc} and that $\varphi_\delta\geq0$, we find, using \eqref{ComparisonIneqStepI}, 
\begin{align*}
&(u_1 - v_{\eps,n,m})(t_0,\mu_0) \ =\
(\check u_1 - \check v_{\eps,n,m})(t_0,\mu_0) \ \leq \
(\check u_1 - \check v_{\eps,n,m})(\tilde t,\tilde\mu)
-\delta^2\varphi_\delta(\tilde t,\tilde\mu)
\\[2mm]
&\le
(\check u_1 - \check v_{\eps,n,m})(\tilde t,\tilde\mu)
\le
\delta^2\,\partial_t \varphi_\delta(\tilde t,\tilde\mu) \\
&+ \delta^2\int_{\R^d}\sup_{a\in A}\bigg\{\big\langle b(\tilde t,x,\tilde\mu,a),\partial_\mu \varphi_\delta(\tilde t,\tilde\mu)(x)\big\rangle + \frac{1}{2}\text{tr}\big[(\sigma\sigma\trans)(\tilde t,x,a)\partial_x\partial_\mu\varphi_\delta(\tilde t,\tilde\mu)(x)\big]\bigg\}\tilde\mu(dx) \\
&+ \int_{\R^d}\sup_{a\in A}\bigg\{\big\langle b(\tilde t,x,\tilde\mu,a),\partial_\mu \check v_{\eps,n,m}(\tilde t,\tilde\mu)(x)\big\rangle + \frac{1}{2}\text{tr}\big[(\sigma\sigma\trans)(\tilde t,x,a)\partial_x\partial_\mu\check v_{\eps,n,m}(\tilde t,\tilde\mu)(x)\big] \\
&+ \check f(\tilde t,x,\tilde\mu,a)\bigg\}\tilde\mu(dx) - \int_{\R^{dn}}\sum_{i=1}^n\sup_{a_i\in A}\bigg\{\frac{1}{n}\check f_{n,m}^i(\tilde t,\bar x,a_i) + \langle b_{n,m}^i(\tilde t,\bar x,a_i),\partial_{x_i}\check{\bar v}_{\eps,n,m}(\tilde t,\bar x)\rangle \\
&+ \frac{1}{2}\textup{tr}\Big[\big((\sigma\sigma\trans)(\tilde t,x_i,a_i) + \eps^2\big)\partial_{x_ix_i}^2\check{\bar v}_{\eps,n,m}(\tilde t,\bar x)\Big]\bigg\}\tilde\mu(dx_1)\otimes\cdots\otimes\tilde\mu(dx_n).
\end{align*}
Now, recalling that $b$ and $\sigma$ are bounded, by item 2) and estimates \eqref{MeasDeriv_phi_Bound}-\eqref{MeasDeriv2nd_phi_Bound} of Theorem \ref{T:SmoothVarPrinc}, we deduce that
\begin{align*}
&\partial_t \varphi_\delta(\tilde t,\tilde\mu) + \int_{\R^d}\sup_{a\in A}\bigg\{\big\langle b(\tilde t,x,\tilde\mu,a),\partial_\mu \varphi_\delta(\tilde t,\tilde\mu)(x)\big\rangle + \frac{1}{2}\text{tr}\big[(\sigma\sigma\trans)(\tilde t,x,a)\partial_x\partial_\mu\varphi_\delta(\tilde t,\tilde\mu)(x)\big]\bigg\}\tilde\mu(dx) \\
&\leq 4T + \Lambda\delta^2\bigg(\int_{\R^d} \big(|y|^2+|y|^3+|y|^4\delta^2\big)\zeta_{1/\delta}(y)dy + \Wc_2(\tilde\mu,\delta_0)^2\!\!\int_{\R^d} \big(1+|y|+|y|^2\delta^2\big)\zeta_{1/\delta}(y)dy\bigg),
\end{align*}
for some constant $\Lambda\geq0$, independent of $\eps,n,m,\delta$, where $\delta_0$ is the Dirac measure centered at zero, so that $\Wc_2(\tilde\mu,\delta_0)^2=\int_{\R^d}|x|^2\tilde\mu(dx)$. Hence
\begin{align}\label{ComparisonIneqStepII}
&(u_1 - v_{\eps,n,m})(t_0,\mu_0) \ \leq \ 4\delta^2T \\
&+ \Lambda\delta^4\bigg(\int_{\R^d} \big(|y|^2+|y|^3+|y|^4\delta^2\big)\zeta_{1/\delta}(y)dy + \Wc_2(\tilde\mu,\delta_0)^2\!\!\int_{\R^d} \big(1+|y|+|y|^2\delta^2\big)\zeta_{1/\delta}(y)dy\bigg) \notag \\
&+ \int_{\R^d}\sup_{a\in A}\bigg\{\big\langle b(\tilde t,x,\tilde\mu,a),\partial_\mu \check v_{\eps,n,m}(\tilde t,\tilde\mu)(x)\big\rangle + \frac{1}{2}\text{tr}\big[(\sigma\sigma\trans)(\tilde t,x,a)\partial_x\partial_\mu\check v_{\eps,n,m}(\tilde t,\tilde\mu)(x)\big] \notag \\
&+ \check f(\tilde t,x,\tilde\mu,a)\bigg\}\tilde\mu(dx) - \int_{\R^{dn}}\sum_{i=1}^n\sup_{a_i\in A}\bigg\{\frac{1}{n}\check f_{n,m}^i(\tilde t,\bar x,a_i) + \langle b_{n,m}^i(\tilde t,\bar x,a_i),\partial_{x_i}\check{\bar v}_{\eps,n,m}(\tilde t,\bar x)\rangle \notag \\
&+ \frac{1}{2}\textup{tr}\Big[\big((\sigma\sigma\trans)(\tilde t,x_i,a_i) + \eps^2\big)\partial_{x_ix_i}^2\check{\bar v}_{\eps,n,m}(\tilde t,\bar x)\Big]\bigg\}\tilde\mu(dx_1)\otimes\cdots\otimes\tilde\mu(dx_n). \notag
\end{align}
By formulae \eqref{partial_mu_identity} and \eqref{partial_x_partial_mu_identity}, we have
\begin{align}\label{ComparisonIneqStepIII}
&\int_{\R^d}\sup_{a\in A}\bigg\{\big\langle b(\tilde t,x,\tilde\mu,a),\partial_\mu \check v_{\eps,n,m}(\tilde t,\tilde\mu)(x)\big\rangle + \frac{1}{2}\text{tr}\big[(\sigma\sigma\trans)(\tilde t,x,a)\partial_x\partial_\mu\check v_{\eps,n,m}(\tilde t,\tilde\mu)(x)\big] \notag \\
&\quad \ + \check f(\tilde t,x,\tilde\mu,a)\bigg\}\tilde\mu(dx) \notag \\
&= \ \int_{\R^d}\sup_{a\in A}\bigg\{\int_{\R^{d(n-1)}}\sum_{i=1}^n\bigg\{\big\langle b(\tilde t,x,\tilde\mu,a),\partial_{x_i} \check{\bar v}_{\eps,n,m}(t,x_1,\ldots,x_{i-1},x,x_{i+1},\ldots,x_n)\big\rangle \notag \\
&\quad \ + \frac{1}{2}\text{tr}\big[(\sigma\sigma\trans)(\tilde t,x,a)\partial_{x_ix_i}^2 \check{\bar v}_{\eps,n,m}(t,x_1,\ldots,x_{i-1},x,x_{i+1},\ldots,x_n)\big] \notag \\
&\quad \ + \frac{1}{n} \check f(\tilde t,x,\tilde\mu,a)\bigg\}\tilde\mu(dx_1)\cdots\tilde\mu(dx_{i-1})\,\tilde\mu(dx_{i+1})\cdots\tilde\mu(dx_n)\bigg\}\tilde\mu(dx) \notag \\
&\leq \ \int_{\R^d}\int_{\R^{d(n-1)}}\sum_{i=1}^n\sup_{a\in A}\bigg\{\big\langle b(\tilde t,x,\tilde\mu,a),\partial_{x_i} \check{\bar v}_{\eps,n,m}(t,x_1,\ldots,x_{i-1},x,x_{i+1},\ldots,x_n)\big\rangle \notag \\
&\quad \ + \frac{1}{2}\text{tr}\big[(\sigma\sigma\trans)(\tilde t,x,a)\partial_{x_ix_i}^2 \check{\bar v}_{\eps,n,m}(t,x_1,\ldots,x_{i-1},x,x_{i+1},\ldots,x_n)\big] \notag \\
&\quad \ + \frac{1}{n} \check f(\tilde t,x,\tilde\mu,a)\bigg\}\tilde\mu(dx_1)\cdots\tilde\mu(dx_{i-1})\,\tilde\mu(dx_{i+1})\cdots\tilde\mu(dx_n)\tilde\mu(dx) \notag \\
&= \ \int_{\R^{dn}}\sum_{i=1}^n\sup_{a_i\in A}\bigg\{\big\langle b(\tilde t,x_i,\tilde\mu,a_i),\partial_{x_i} \check{\bar v}_{\eps,n,m}(t,\bar x)\big\rangle + \frac{1}{2}\text{tr}\big[(\sigma\sigma\trans)(\tilde t,x_i,a_i)\partial_{x_ix_i}^2 \check{\bar v}_{\eps,n,m}(t,\bar x)\big] \notag \\
&\quad \ + \frac{1}{n} \check f(\tilde t,x_i,\tilde\mu,a_i)\bigg\}\tilde\mu(dx_1)\cdots\tilde\mu(dx_n).
\end{align}
Plugging \eqref{ComparisonIneqStepIII} into \eqref{ComparisonIneqStepII}, we obtain
\begin{align*}
&(u_1 - v_{\eps,n,m})(t_0,\mu_0) \ \leq \ 4\delta^2T \\
&+ \Lambda\delta^4\bigg(\int_{\R^d} \big(|y|^2+|y|^3+|y|^4\delta^2\big)\zeta_{1/\delta}(y)dy + \Wc_2(\tilde\mu,\delta_0)^2\!\!\int_{\R^d} \big(1+|y|+|y|^2\delta^2\big)\zeta_{1/\delta}(y)dy\bigg) \\
&+ \int_{\R^{dn}}\sum_{i=1}^n\sup_{a_i\in A}\bigg\{\big\langle b(\tilde t,x_i,\tilde\mu,a_i) - b_{n,m}^i(\tilde t,\bar x,a_i),\partial_{x_i} \check{\bar v}_{\eps,n,m}(t,\bar x)\big\rangle \notag \\
&+ \frac{1}{n} \check f(\tilde t,x_i,\tilde\mu,a_i) - \frac{1}{n}\check f_{n,m}^i(\tilde t,\bar x,a_i)\bigg\}\tilde\mu(dx_1)\otimes\cdots\otimes\tilde\mu(dx_n) \notag \\
&- \frac{1}{2}\eps^2\int_{\R^{dn}}\sum_{i=1}^n\textup{tr}\big[\partial_{x_ix_i}^2\check{\bar v}_{\eps,n,m}(\tilde t,\bar x)\big]\tilde\mu(dx_1)\otimes\cdots\otimes\tilde\mu(dx_n) \notag \\
&\leq \ 4\delta^2T \\
&+ \Lambda\delta^4\bigg(\int_{\R^d} \big(|y|^2+|y|^3+|y|^4\delta^2\big)\zeta_{1/\delta}(y)dy + \Wc_2(\tilde\mu,\delta_0)^2\!\!\int_{\R^d} \big(1+|y|+|y|^2\delta^2\big)\zeta_{1/\delta}(y)dy\bigg) \notag \\
&+ \int_{\R^{dn}}\sum_{i=1}^n\sup_{a_i\in A}\Big\{\big| b(\tilde t,x_i,\tilde\mu,a_i) - b_{n,m}^i(\tilde t,\bar x,a_i)\big|\big|\partial_{x_i} \check{\bar v}_{\eps,n,m}(t,\bar x)\big|\Big\}\tilde\mu(dx_1)\otimes\cdots\otimes\tilde\mu(dx_n) \notag \\
&+ \frac{1}{n} \int_{\R^{dn}}\sum_{i=1}^n\sup_{a_i\in A}\Big\{\big|\check f(\tilde t,x_i,\tilde\mu,a_i) - \check f_{n,m}^i(\tilde t,\bar x,a_i)\big|\Big\}\tilde\mu(dx_1)\otimes\cdots\otimes\tilde\mu(dx_n) \notag \\
&- \frac{1}{2}\eps^2\int_{\R^{dn}}\sum_{i=1}^n\textup{tr}\big[\partial_{x_ix_i}^2\check{\bar v}_{\eps,n,m}(\tilde t,\bar x)\big]\tilde\mu(dx_1)\otimes\cdots\otimes\tilde\mu(dx_n) \notag \\
&\leq \ 4\delta^2T \\
&+ \Lambda\delta^4\bigg(\int_{\R^d} \big(|y|^2+|y|^3+|y|^4\delta^2\big)\zeta_{1/\delta}(y)dy + \Wc_2(\tilde\mu,\delta_0)^2\!\!\int_{\R^d} \big(1+|y|+|y|^2\delta^2\big)\zeta_{1/\delta}(y)dy\bigg) \notag \\
&+ \frac{C_K}{n} \textup{e}^{\tilde t-t_0}\int_{\R^{dn}}\sum_{i=1}^n\sup_{a\in A}\Big\{\big| b(\tilde t,x_i,\tilde\mu,a) - b_{n,m}^i(\tilde t,\bar x,a)\big|\Big\}\tilde\mu(dx_1)\otimes\cdots\otimes\tilde\mu(dx_n) \notag \\
&+ \frac{1}{n} \textup{e}^{\tilde t-t_0} \int_{\R^{dn}}\sum_{i=1}^n\sup_{a\in A}\Big\{\big|f(\tilde t,x_i,\tilde\mu,a) - f_{n,m}^i(\tilde t,\bar x,a)\big|\Big\}\tilde\mu(dx_1)\otimes\cdots\otimes\tilde\mu(dx_n) \notag \\
&- \frac{1}{2}\eps^2\int_{\R^{dn}}\sum_{i=1}^n\textup{tr}\big[\partial_{x_ix_i}^2\check{\bar v}_{\eps,n,m}(\tilde t,\bar x)\big]\tilde\mu(dx_1)\otimes\cdots\otimes\tilde\mu(dx_n), \notag
\end{align*}
where the last inequality follows from estimate \eqref{Estimate_1stDeriv_n}. Recalling the left estimate in \eqref{Estimate_2ndDeriv}, we obtain
\begin{align*}
&(u_1 - v_{\eps,n,m})(t_0,\mu_0) \ \leq \ \frac{1}{2}\eps^2\,n\,d\,C_{n,m}\,\textup{e}^{\tilde t-t_0} + 4\delta^2T \\
&+ \Lambda\delta^4\bigg(\int_{\R^d} \big(|y|^2+|y|^3+|y|^4\delta^2\big)\zeta_{1/\delta}(y)dy + \Wc_2(\tilde\mu,\delta_0)^2\!\!\int_{\R^d} \big(1+|y|+|y|^2\delta^2\big)\zeta_{1/\delta}(y)dy\bigg) \notag \\
&+ \frac{C_K}{n} \textup{e}^{\tilde t-t_0}\int_{\R^{dn}}\sum_{i=1}^n\sup_{a\in A}\Big\{\big| b(\tilde t,x_i,\tilde\mu,a) - b_{n,m}^i(\tilde t,\bar x,a)\big|\Big\}\tilde\mu(dx_1)\otimes\cdots\otimes\tilde\mu(dx_n) \notag \\
&+ \frac{1}{n} \textup{e}^{\tilde t-t_0} \int_{\R^{dn}}\sum_{i=1}^n\sup_{a\in A}\Big\{\big|f(\tilde t,x_i,\tilde\mu,a) - f_{n,m}^i(\tilde t,\bar x,a)\big|\Big\}\tilde\mu(dx_1)\otimes\cdots\otimes\tilde\mu(dx_n) \notag \\
&\leq \ \frac{1}{2}\eps^2\,n\,d\,C_{n,m}\,\textup{e}^{\tilde t-t_0} + 4\delta^2T \\
&+ \Lambda\delta^4\bigg(\int_{\R^d} \big(|y|^2+|y|^3+|y|^4\delta^2\big)\zeta_{1/\delta}(y)dy + \Wc_2(\tilde\mu,\delta_0)^2\!\!\int_{\R^d} \big(1+|y|+|y|^2\delta^2\big)\zeta_{1/\delta}(y)dy\bigg) \notag \\
&+ \frac{C_K}{n} \textup{e}^{\tilde t-t_0}\int_{\R^{dn}}\sum_{i=1}^n\sup_{a\in A}\Big\{\big| b(\tilde t,x_i,\tilde\mu,a) - b(\tilde t,x_i,\widehat\mu^{n,\bar x},a)\big|\Big\}\tilde\mu(dx_1)\otimes\cdots\otimes\tilde\mu(dx_n) \notag \\
&+ \frac{C_K}{n} \textup{e}^{\tilde t-t_0}\int_{\R^{dn}}\sum_{i=1}^n\sup_{a\in A}\Big\{\big|b(\tilde t,x_i,\widehat\mu^{n,\bar x},a) - b_{n,m}^i(\tilde t,\bar x,a)\big|\Big\}\tilde\mu(dx_1)\otimes\cdots\otimes\tilde\mu(dx_n) \notag \\
&+ \frac{1}{n}  \textup{e}^{\tilde t-t_0} \int_{\R^{dn}}\sum_{i=1}^n\sup_{a\in A}\Big\{\big|f(\tilde t,x_i,\tilde\mu,a) - f(\tilde t,x_i,\widehat\mu^{n,\bar x},a)\big|\Big\}\tilde\mu(dx_1)\otimes\cdots\otimes\tilde\mu(dx_n) \notag \\
&+ \frac{1}{n}  \textup{e}^{\tilde t-t_0} \int_{\R^{dn}}\sum_{i=1}^n\sup_{a\in A}\Big\{\big|f(\tilde t,x_i,\widehat\mu^{n,\bar x},a) - f_{n,m}^i(\tilde t,\bar x,a)\big|\Big\}\tilde\mu(dx_1)\otimes\cdots\otimes\tilde\mu(dx_n) \notag \\
&\leq \ \frac{1}{2}\eps^2\,n\,d\,C_{n,m}\,\textup{e}^{\tilde t-t_0} + 4\delta^2T \\
&+ \Lambda\delta^4\bigg(\int_{\R^d} \big(|y|^2+|y|^3+|y|^4\delta^2\big)\zeta_{1/\delta}(y)dy + \Wc_2(\tilde\mu,\delta_0)^2\!\!\int_{\R^d} \big(1+|y|+|y|^2\delta^2\big)\zeta_{1/\delta}(y)dy\bigg) \notag \\
&+ \frac{C_K}{n} \textup{e}^{\tilde t-t_0}\int_{\R^{dn}}\sum_{i=1}^n K\,\Wc_2(\tilde\mu,\widehat\mu^{n,\bar x})\,\tilde\mu(dx_1)\otimes\cdots\otimes\tilde\mu(dx_n) \notag \\
&+ \frac{C_K}{n} \textup{e}^{\tilde t-t_0}\int_{\R^{dn}}\sum_{i=1}^n\sup_{a\in A}\Big\{\big|b(\tilde t,x_i,\widehat\mu^{n,\bar x},a) - b_{n,m}(\tilde t,\bar x,a)\big|\Big\}\tilde\mu(dx_1)\otimes\cdots\otimes\tilde\mu(dx_n) \notag \\
&+ \frac{1}{n}  \textup{e}^{\tilde t-t_0} \int_{\R^{dn}}\sum_{i=1}^n K\,\Wc_2(\tilde\mu,\widehat\mu^{n,\bar x})\,\tilde\mu(dx_1)\otimes\cdots\otimes\tilde\mu(dx_n) \notag \\
&+ \frac{1}{n}  \textup{e}^{\tilde t-t_0} \int_{\R^{dn}}\sum_{i=1}^n\sup_{a\in A}\Big\{\big|f(\tilde t,x_i,\widehat\mu^{n,\bar x},a) - f_{n,m}(\tilde t,\bar x,a)\big|\Big\}\tilde\mu(dx_1)\otimes\cdots\otimes\tilde\mu(dx_n), \notag
\end{align*}
where the last inequality follows from the Lipschitz property of $b$ and $f$. Recalling \eqref{EmpirDistrLimit} and \eqref{delta_x_2} we find
\begin{align}\label{ComparisonIneqStepV}
&(u_1 - v_{\eps,n,m})(t_0,\mu_0) \ \leq \ \frac{1}{2}\eps^2\,n\,d\,C_{n,m}\,\textup{e}^{\tilde t-t_0} + 4\delta^2T \\
&+ \Lambda\delta^4\bigg(\int_{\R^d} \big(|y|^2+|y|^3+|y|^4\delta^2\big)\zeta_{1/\delta}(y)dy + \Wc_2(\tilde\mu,\delta_0)^2\!\!\int_{\R^d} \big(1+|y|+|y|^2\delta^2\big)\zeta_{1/\delta}(y)dy\bigg) \notag \\
&+ (C_K+1)\,K\, \textup{e}^{\tilde t-t_0}c_d \bigg(\frac{1}{\delta}\bigg(\sqrt{2\lambda+8\delta\sqrt{2c_d\lambda}} + 2\sqrt{d+2}\bigg) + \Wc_2(\mu_0,\delta_0)\bigg)\,h_n \notag \\
&+ \frac{C_K}{n} \textup{e}^{\tilde t-t_0}\int_{\R^{dn}}\sum_{i=1}^n\sup_{a\in A}\Big\{\big|b(\tilde t,x_i,\widehat\mu^{n,\bar x},a) - b_{n,m}^i(\tilde t,\bar x,a)\big|\Big\}\tilde\mu(dx_1)\otimes\cdots\otimes\tilde\mu(dx_n) \notag \\
&+ \frac{1}{n}  \textup{e}^{\tilde t-t_0} \int_{\R^{dn}}\sum_{i=1}^n\sup_{a\in A}\Big\{\big|f(\tilde t,x_i,\widehat\mu^{n,\bar x},a) - f_{n,m}^i(\tilde t,\bar x,a)\big|\Big\}\tilde\mu(dx_1)\otimes\cdots\otimes\tilde\mu(dx_n). \notag
\end{align}
Now, from the Lemma \ref{lm:conv} we get
\begin{align*}
&\big|b(\tilde t,x_i,\widehat\mu^{n,\bar x},a) - b_{n,m}^i(\tilde t,\bar x,a)\big| \\
&\leq \ K m^{nd+1}\int_{\R^{dn+1}}\bigg(\big|\tilde t - T\wedge(\tilde t-s)^+\big|^\beta + |y_i| + \frac{1}{n} \sum_{j=1}^n |y_j|\bigg)\zeta(ms)\prod_{j=1}^{n}\Phi(my_j)dy_jds.
\end{align*}
An analogous estimate holds for $|f(\tilde t,x_i,\widehat\mu^{n,\bar x},a)-f_{n,m}^i(\tilde t,\bar x,a)|$. Then, plugging these estimates into \eqref{ComparisonIneqStepV} we obtain
\begin{align*}
&(u_1 - v_{\eps,n,m})(t_0,\mu_0) \ \leq \ \frac{1}{2}\eps^2\,n\,d\,C_{n,m}\,\textup{e}^{\tilde t-t_0} + 4\delta^2T \\
&+ \Lambda\delta^4\bigg(\int_{\R^d} \big(|y|^2+|y|^3+|y|^4\delta^2\big)\zeta_{1/\delta}(y)dy + \Wc_2(\tilde\mu,\delta_0)^2\!\!\int_{\R^d} \big(1+|y|+|y|^2\delta^2\big)\zeta_{1/\delta}(y)dy\bigg) \notag \\
&+ (C_K+1)\,K\, \textup{e}^{\tilde t-t_0}c_d \bigg(\frac{1}{\delta}\bigg(\sqrt{2\lambda+8\delta\sqrt{2c_d\lambda}} + 2\sqrt{d+2}\bigg) + \Wc_2(\mu_0,\delta_0)\bigg)\,h_n \notag \\
&+ (C_K+1)\,K\, \textup{e}^{\tilde t-t_0}m^{nd+1}\int_{\R^{dn+1}}\bigg(\big|\tilde t - T\wedge(\tilde t-s)^+\big|^\beta + \frac{1}{n}\sum_{i=1}^n|y_i| \notag \\
&+ \frac{1}{n} \sum_{j=1}^n |y_j|\bigg)\zeta(ms)\prod_{j=1}^{n}\Phi(my_j)dy_jds \notag \\
&\leq \ \frac{1}{2}\eps^2\,n\,d\,C_{n,m}\,\textup{e}^{\tilde t-t_0} + 4\delta^2T + \Lambda\delta^4\bigg(\int_{\R^d} \big(|y|^2+|y|^3+|y|^4\delta^2\big)\zeta_{1/\delta}(y)dy \notag \\
&+ \frac{1}{\delta^2}\bigg(\sqrt{2\lambda+8\delta\sqrt{2c_d\lambda}} + 2\sqrt{d+2}\bigg)^2\int_{\R^d} \big(1+|y|+|y|^2\delta^2\big)\zeta_{1/\delta}(y)dy\bigg) \notag \\
&+ (C_K+1)\,K\, \textup{e}^{\tilde t-t_0}c_d \bigg(\frac{1}{\delta}\bigg(\sqrt{2\lambda+8\delta\sqrt{2c_d\lambda}} + 2\sqrt{d+2}\bigg) + \Wc_2(\mu_0,\delta_0)\bigg)\,h_n \notag \\
&+ (C_K+1)\,K\, \textup{e}^{\tilde t-t_0}m^{nd+1}\int_{\R^{dn+1}}\bigg(\big|\tilde t - T\wedge(\tilde t-s)^+\big|^\beta + \frac{1}{n}\sum_{i=1}^n|y_i| \notag \\
&+ \frac{1}{n} \sum_{j=1}^n |y_j|\bigg)\zeta(ms)\prod_{j=1}^{n}\Phi(my_j)dy_jds, \notag
\end{align*}
where in the last inequality we have used again \eqref{delta_x_2}.

Now, we send $\eps\rightarrow0^+$ so the left hand side goes to $u_1-v_{0,n,m}$ and the first term after the last inequality above goes to zero. Then we send $m\rightarrow+\infty$ (so the last term above goes to zero), and afterwards $n\rightarrow+\infty$ (the second to last term above goes to zero) and use Theorem A.6. Finally, we send $\delta\rightarrow0^+$, from which we obtain (notice that $\int_{\R^d}|y|^\ell\zeta_{1/\delta}(y)dy
=\int_{\R^d}|z|^\ell\delta^{-\ell}\zeta_1(z)dz$, for every $\ell\in\N$, with $\zeta_1$ given by \eqref{zeta_varrho} with $\varrho=1$)
\[
(u_1 - v_0)(t_0,\mu_0) \ \leq \ 0.
\]
This gives a contradiction to \eqref{Contradiction}.

\vspace{2mm}

\noindent\textsc{Step II.} \emph{Proof of $v_0\leq u_2$.} 
Our aim is to prove that
\begin{equation}\label{ClaimStepII}
u_2(t,\mu) \ \geq \ \E\bigg[\int_t^s f\big(r,X_r^{t,\xi,\mathfrak a},\P_{X_r^{t,\xi,\mathfrak a}},\mathfrak a\big)\,dr\bigg] + u_2\big(s,\P_{X_s^{t,\xi,\mathfrak a}}\big),
\end{equation}
for every $(t,\mu)\in[0,T]\times\Pc_2(\R^d)$, $s\in[t,T]$, $\xi\in L^2(\Omega,\Fc_t,\P;\R^d)$, with $\P_\xi=\mu$, and $\mathfrak a\in\Mc_t$, where $\Mc_t$ denotes the set of $\Fc_t$-measurable random variables $\mathfrak a\colon\Omega\rightarrow A$. $(X^{t,\xi,\mathfrak a})_{r\in[t,T]}$ is equal to the process $(X_r^{t,\xi,\alpha})_{r\in[t,T]}$ with $\alpha_r= \mathfrak a$ for $r\in[t,T]$.

To see why $u_2\geq v_0$ follows from \eqref{ClaimStepII}, we need to introduce some notation. First of all, following \cite[Definition 3.2.3]{Krylov80}, we define on $\Ac$ the metric $\rho_{\textup{Kr}}$ given by
\[
\rho_{\textup{Kr}}(\alpha,\beta) \ := \ \E\bigg[\int_0^T |\alpha_t-\beta_t|\,dt\bigg], \qquad \forall\,\alpha,\beta\in\Ac.
\]
Recall that, by Assumption \ref{AssC} $A$ is a compact subset of some Euclidean space, so that $|\alpha_t-\beta_t|$ denotes the Euclidean distance between $\alpha_t$ and $\beta_t$, moreover $|\alpha_t-\beta_t|$ is bounded by some constant which depends only on $A$. Following \cite{Krylov80}, we also define the class of step control processes for the problem starting at $t\in[0,T]$:
\begin{align*}
\Ac_{\textup{\tiny{step}}}^t \ := \ \Big\{\alpha\in\Ac\colon \text{there exist $n\in\N$ and $t=t_0<t_1<\cdots<t_{n-1}<t_n=T$}& \\
\text{such that }\alpha_s = \alpha_{t_i},\;\forall\,s\in[t_i,t_{i+1}),\;i=0,\ldots,n-1&\Big\}.
\end{align*}
By \cite[Lemma 3.2.6]{Krylov80}, we know that $\Ac_{\textup{\tiny{step}}}^t$ is dense in $\Ac$ with respect to the metric $\rho_{\textup{Kr}}$. Moreover, by similar arguments as in \cite[Lemma 3.2.7]{Krylov80}, it is easy to prove that the reward functional $J=J(s,\xi,\alpha)$ in \eqref{RewardFunct} is continuous in $\alpha$ with respect to the metric $\rho_{\textup{Kr}}$. Then, using the density of $\Ac_{\textup{\tiny{step}}}^t$ in $\Ac$ and the continuity of the reward functional with respect to $\rho_{\textup{Kr}}$, we deduce that $v_0(t,\mu)$ can be equivalently defined as the supremum of the reward functional over $\Ac_{\textup{\tiny{step}}}^t$ (rather than $\Ac$). Now, let $t\in[0,T]$ and $\alpha\in\Ac_{\textup{\tiny{step}}}^t$, so that there exist $n\in\N$, $t=t_0<t_1<\cdots<t_{n-1}<t_n=T$, and $\mathfrak a_0,\ldots,\mathfrak a_{n-1}\colon\Omega\rightarrow A$, with $\mathfrak a_i\in\Fc_{t_i}$, such that
\[
\alpha_s \ = \ \mathfrak a_i, \qquad \forall\,s\in[t_i,t_{i+1}),\;i=0,\ldots,n-1.
\]
If \eqref{ClaimStepII} holds true, applying it recursively on the intervals $[t_i,t_{i+1})$, $i=0,\dots,n-1$ with $\mathfrak a=\mathfrak a_0,\ldots,\mathfrak a_{n-1}$, we get
\[
u_2(t,\mu) \ \geq \ \E\bigg[\int_t^T f\big(r,X_r^{t,\xi,\alpha},\P_{X_r^{t,\xi,\alpha}},\alpha_r\big)\,dr + g\big(X_T^{t,\xi,\alpha},\P_{X_T^{t,\xi,\alpha}}\big)\bigg].
\]
Since $\alpha$ was arbitrary, the above inequality holds for every $\alpha\in\Ac_{\textup{\tiny{step}}}^t$, proving that $u_2\geq v_0$. It remains to prove \eqref{ClaimStepII}. To this end, for every $t\in[0,T]$, $\xi\in L^2(\Omega,\Fc,\P;\R^d)$, $\mathfrak a\in\Mc_t$, we consider the system of uncontrolled stochastic differential equations:
\[
\begin{cases}
X_s \ = \ \xi + \int_t^s b\big(r,X_r,\P_{X_r},Y_r\big)\,dr
+ \int_t^s \sigma(r,X_r,Y_r)\,dB_r, \qquad s\in[t,T], \\
Y_s = \mathfrak a, \qquad s\in[t,T].
\end{cases}
\]
We denote by $(X^{t,\xi,\mathfrak a},Y^{t,\mathfrak a})$ the unique solution to the above system of equations. Then, fixed $\underline t\in[0,T)$, $s\in(\underline t,T]$, we set
\[
v^s(t,\nu) \ := \ \E\bigg[\int_t^s f\big(r,X_r^{t,\xi,\mathfrak a},\P_{X_r^{t,\xi,\mathfrak a}},Y_r^{t,\mathfrak a}\big)\,dr\bigg] + u_2\big(s,\P_{X_s^{t,\xi,\mathfrak a}}\big),
\]
for all $(t,\nu)\in[\underline t,s]\times\Pc_2(\R^d\times A)$, $\xi\in L^2(\Omega,\Fc_t,\P;\R^d)$ and $\mathfrak a\in\Mc_{\underline t}$ such that $\P_{(\xi,\mathfrak a)}=\nu$. Then, our aim is to prove that $u_2(t,\mu)\geq v^s(t,\nu)$, for every $(t,\nu)\in[\underline t,s]\times\Pc_2(\R^d\times A)$, with $\mu$ being the first marginal of $\nu$, from which we get \eqref{ClaimStepII} for $t=\underline t$.\\
\textcolor{black}{Suppose for a moment that $u_2(s,\cdot)$ is Lipschitz continuous. Then, reasoning as in the proof of Proposition \ref{P:ValueFunction}, we obtain that $v^{s}$ is bounded and Lipschitz continuous. If $u_2(s,\cdot)$ is not Lipschitz continuous, following \cite[formula (5.1.4)]{AGS08} we can pointwise approximate $u_2(s,\cdot)$ from below with an increasing sequence of bounded Lipschitz functions $u_k$
\[
u_{2,k}(\mu) := \inf_{\nu\in\Pc_2(\R^d)}\big\{u_2(s,\nu)+k\Wc_2(\mu,\nu)\big\}, \, \text{with} \,
\begin{cases}
\inf u_2(s,\cdot) \leq u_{2,k}(\mu) \leq u_2(s,\mu) \leq \sup u_2(s,\cdot), \\
u_2(s,\mu)=\lim_{k\rightarrow\infty} u_{2,k}(\mu) = \sup_{k\in\N} u_{2,k}(\mu).
\end{cases}
\]
Then, we define
\[
v_k^s(t,\nu) \ := \ \E\bigg[\int_t^s f\big(r,X_r^{t,\xi,\mathfrak a},\P_{X_r^{t,\xi,\mathfrak a}},Y_r^{t,\mathfrak a}\big)\,dr\bigg] + u_{2,k}\big(\P_{X_s^{t,\xi,\mathfrak a}}\big).
\]
If we prove that $u_2(t,\mu)\geq v_k^s(t,\nu)$, for every $k\in\N$, then sending $k\rightarrow\infty$ we conclude that $u_2(t,\mu)\geq v^s(t,\nu)$. In what follow we suppose that $u_2(s,\cdot)$ is Lipschitz continuous and therefore we consider the function $v^s$ and we prove that $u_2(t,\mu)\geq v^s(t,\nu)$. This is not a loss of generality. As a matter of fact, if $u_2(s,\cdot)$ is not Lipschitz continuous we repeat the same arguments reported below to $v_k^s$ instead of $v^s$, therefore proving that $u_2(t,\mu)\geq v_k^s(t,\nu)$, for every $k\in\N$. As already noticed, from the arbitrariness of $k$, we conclude that $u_2(t,\mu)\geq v^s(t,\nu)$.}\\

\noindent Now, let us prove $u_2(t,\mu)\geq v^s(t,\nu)$, for every $\underline t\in[0,T)$, $s\in(\underline t,T]$, $(t,\nu)\in[\underline t,s]\times\Pc_2(\R^d\times A)$, with $\mu$ being the first marginal of $\nu$. We proceed by contradiction and suppose that there exist $\underline t_0\in[0,T)$, $s_0\in(\underline t_0,T]$, $(t_0,\mu_0,\nu_0)\in[\underline t_0,s_0)\times\Pc_2(\R^d)\times\Pc_2(\R^d\times A)$, with $\mu_0$ being the first marginal of $\nu_0$, such that
\[
v^{s_0}(t_0,\nu_0) \ > \ u_2(t_0,\mu_0).
\]
As in \textsc{Step I}, we can suppose that there exists some $q>2$ such that $\nu_0\in\Pc_q(\R^d)$.

For every $n,m\in\N$, let $v_{n,m}^{s_0}$ be the map given by \eqref{v_n,m^s,a}. Now, we define $\check u_2(t,\mu):=\text{e}^{t-t_0} u_2(t,\mu)$, for every $(t,\mu)\in[\underline t_0,s_0]\times\Pc_2(\R^d)$, and similarly $\check v_{n,m}^{s_0}$, $\check f_{n,m}^i$, $\check f$
from $v_{n,m}^{s_0}$, $\tilde f_{n,m}^i$, $f$, respectively. We also define $\check g(x,\mu):=\text{e}^{T-t_0} g(x,\mu)$ and $\check u_{n,m}(s_0,\mu):=\text{e}^{s_0-t_0} u_{n,m}(s_0,\mu)$, for every $(x,\mu)\in\R^d\times\Pc_2(\R^d)$. We observe that, given $\mathfrak a_0\in\Mc_t$ with distribution being equal to the marginal of $\nu_0$ on $A$, $\check u_2$ is a viscosity supersolution of the following equation (see e.g. \cite[Section 7]{CKGPR20}):
\[
\begin{cases}
\vspace{2mm}
\displaystyle\partial_t \check u_2(t,\mu) + \E\bigg\{\check f(t,\xi,\mu,\mathfrak a_0) + \dfrac{1}{2}\text{tr}\big[(\sigma\sigma\trans)(t,\xi,\mathfrak a_0)\partial_x\partial_\mu \check u_2(t,\mu)(\xi)\big] \\
\displaystyle\vspace{2mm}+\,\big\langle b(t,\xi,\mu,\mathfrak a_0),\partial_\mu \check u_2(t,\mu)(\xi)\rangle\bigg\} = \check u_2(t,\mu), &\hspace{-3cm}(t,\mu)\in[\underline t_0,s_0)\times \Pc_2(\R^d), \\
\displaystyle \check u_2(s_0,\mu) = \check u_2(s_0,\mu), &\hspace{-3cm}\,\mu\in\Pc_2(\R^d),
\end{cases}
\]
for any $\xi\in L^2(\Omega,\Fc_t,\P;\R^d)$, with $\P_\xi=\mu$. Moreover, by Theorem \ref{T:SmoothApprox2} we deduce that $\check v_{n,m}^{s_0}$ solves the following equation:
\[
\hspace{-5mm}\begin{cases}
\vspace{2mm}
\displaystyle\partial_t \check v_{n,m}^{s_0}(t,\nu) + \bar\E\sum_{i=1}^n\bigg\{\frac{1}{n} \check f_{n,m}^i(t,\xi_1,\ldots,\xi_n,\mathfrak a_0^i) + \langle \tilde b_{n,m}^i(t,\xi_1,\ldots,\xi_n,\mathfrak a_0^i),\partial_{x_i}\check{\bar v}_{n,m}^{s_0}(t,\bar\xi,\bar{\mathfrak a}_0)\rangle \\
\displaystyle\vspace{2mm}+\,\frac{1}{2}\textup{tr}\Big[(\sigma\sigma\trans)(t,\xi_i,\mathfrak a_0^i)\partial_{x_ix_i}^2\check{\bar v}_{n,m}^{s_0}(t,\bar\xi,\bar{\mathfrak a}_0)\Big]\bigg\} = \check v_{n,m}^{s_0}(t,\nu), &\hspace{-5.7cm}(t,\nu)\in[\underline t_0,s_0)\times \Pc_2(\R^d\times A), \\
\displaystyle \check v_{n,m}^{s_0}(s_0,\nu) = \bar\E\big[\check u_{n,m}(s_0,\bar\xi)\big], &\hspace{-5.05cm}\nu\in\Pc_2(\R^d\times A),
\end{cases}
\]
for any $\bar\xi=(\xi_1,\ldots,\xi_n)\in L^2(\Omega,\Fc_t,\P;\R^{dn})$ and $\bar{\mathfrak a}_0=(\mathfrak a_0^1,\ldots,\mathfrak a_0^n)$, with $\mathfrak a_0^i\in\Mc_t$, such that $\bar\P_{(\bar\xi,\bar{\mathfrak a}_0)}=\nu\otimes\cdots\otimes\nu$, where $\check{\bar v}_{n,m}^{s_0}(t,\bar x,\bar a):=\textup{e}^{t-t_0}\bar v_{n,m}^{s_0}(t,\bar x,\bar a)$, for every $(t,\bar x,\bar a)\in[\underline t_0,s_0]\times(\R^d\times A)^n$, with $\bar v_{n,m}^{s_0}$ being the same function appearing in Theorem \ref{T:SmoothApprox2}.

\textcolor{black}{In the sequel it is useful to see at $u_2$ as a function on $[0,T]\times\Pc_2(\R^d\times A)$ rather than $[0,T]\times\Pc_2(\R^d)$. In other words, it is useful to consider the function
\[
\tilde u_2(t,\nu) \ := \ u_2(t,\mu), \qquad \forall\,(t,\nu)\in[0,T]\times\Pc_2(\R^d),
\]
with $\mu$ being the first marginal of $\nu$. To avoid introducing additional notations, we denote $\tilde u_2$ still by $u_2$.}

Now, notice that $v_{n,m}^{s_0}$ is bounded by a constant independent of $n,m$. As a consequence, there exists $\lambda\geq0$, independent of $n,m$, satisfying
\begin{equation}\label{lambda_II}
\sup_{[0,T]\times\Pc_2(\R^d\times A)} (\check v_{n,m}^{s_0}-\check u_2) \ \leq \ (\check v_{n,m}^{s_0}-\check u_2)(t_0,\nu_0) + \lambda.
\end{equation}
Since $\check v_{n,m}^{s_0}-\check u_2$ is bounded and continuous, by \eqref{lambda_II} and Theorem \ref{T:SmoothVarPrinc} applied on $[\underline t_0,s_0]\times\Pc_2(\R^d\times A)$ with $G=\check v_{n,m}^{s_0}-\check u_2$, we obtain that for every $\delta>0$ there exist $\{(t_k,\nu_k)\}_{k\geq1}\subset[\underline t_0,s_0]\times\Pc_2(\R^d\times A)$ converging to some $(\tilde t,\tilde\nu)\in[\underline t_0,s_0]\times\Pc_2(\R^d\times A)$ and $\varphi_\delta$ such that items (i)-(ii)-(iii) and 1)-2)-3) of Theorem \ref{T:SmoothVarPrinc} hold.

Now, as in the proof of \textsc{Step I} we distinguish two cases. If $\tilde t=s_0$ and, in addition, $s_0=T$, then we proceed as in \textsc{Substep I-c} to get a contradiction. On the other hand, if $s_0<T$, then we proceed as in \textsc{Substep I-d} in order to find a contradiction and conclude the proof.\end{proof}

\begin{Corollary}[Uniqueness]\label{C:Uniqueness}
Let Assumptions \ref{AssA}, \ref{AssB}, \ref{AssC}, \ref{AssD} hold. Then, the value function $v$, given by \eqref{Value}, is the unique bounded and continuous viscosity solution of equation \eqref{HJB}.
\end{Corollary}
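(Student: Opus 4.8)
The plan is to derive uniqueness as an immediate consequence of the existence result (Theorem \ref{T:Exist}) together with the comparison principle (Theorem \ref{T:Comparison}), following the standard pattern by which a comparison theorem yields uniqueness in viscosity-solution theory.

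First I would record that the value function $v$ defined in \eqref{Value} is itself a bounded and continuous viscosity solution of \eqref{HJB}: boundedness and joint continuity on $[0,T]\times\Pc_2(\R^d)$ come from Proposition \ref{P:ValueFunction}, while the viscosity-solution property is precisely the content of Theorem \ref{T:Exist}, which applies since Assumptions \ref{AssA} and \ref{AssB} are among the hypotheses \ref{AssA}--\ref{AssD} assumed in the Corollary. Hence there is at least one solution of the stated type, and it suffices to show it is the only one.

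Next, let $u\colon[0,T]\times\Pc_2(\R^d)\to\R$ be an arbitrary bounded continuous viscosity solution of \eqref{HJB}. By Definition \ref{D:Viscosity}, $u$ is simultaneously a viscosity subsolution and a viscosity supersolution of \eqref{HJB}; the same holds for $v$. Applying Theorem \ref{T:Comparison} once with $u_1:=u$ as subsolution and $u_2:=v$ as supersolution gives $u\le v$ on $[0,T]\times\Pc_2(\R^d)$; applying it a second time with $u_1:=v$ and $u_2:=u$ gives $v\le u$. Combining the two inequalities yields $u=v$, i.e.\ $v$ is the unique bounded continuous viscosity solution.

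I would finally remark that essentially all the difficulty has already been discharged inside Theorem \ref{T:Comparison}: the smooth finite-dimensional approximation of $v$ through the cooperative $n$-player game, the construction of the smooth gauge-type function $\rho_{2,\varrho}$ of Lemma \ref{L:SmoothGauge} and the resulting smooth variational principle (Theorem \ref{T:SmoothVarPrinc}), and the delicate successive passages to the limit in $m$, $n$, $\eps$ and $\delta$. At the level of the Corollary itself there is no genuine obstacle — the only thing to check is that the hypotheses required by both Theorem \ref{T:Exist} and Theorem \ref{T:Comparison} are met under Assumptions \ref{AssA}--\ref{AssD}, which is immediate — so the statement follows as a short two-line deduction.
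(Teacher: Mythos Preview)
Your proposal is correct and follows essentially the same approach as the paper: both invoke Proposition \ref{P:ValueFunction} and Theorem \ref{T:Exist} to verify that $v$ is a bounded continuous viscosity solution, and then apply the comparison Theorem \ref{T:Comparison} twice (swapping the roles of sub- and supersolution) to conclude that any other bounded continuous viscosity solution $u$ satisfies $u\le v$ and $v\le u$, hence $u\equiv v$.
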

\begin{proof}[\textbf{Proof.}]
From Proposition \ref{P:ValueFunction} and Theorem \ref{T:Exist} we know that $v$ is bounded, continuous, and it is a viscosity solution of equation \eqref{HJB}. Now, let $u$ be another bounded and continuous viscosity solution of equation \eqref{HJB}. Then, by Theorem \ref{T:Comparison} we deduce that $u\leq v$ and $v\leq u$ (in fact, both $v$ and $u$ are viscosity sub/supersolution of equation \eqref{HJB}), from which we conclude that $v\equiv u$.
\end{proof}

\appendix

\section{Smooth finite-dimensional approximations of the value function}
\label{S:AppApprox}

\subsection{Mean field control problem on a different probabilistic setting and approximation by non-degenerate control problems}
\label{SubS:A1}

In the present appendix we formulate the mean field control problem on a different probabilistic setting, supporting an independent $d$-dimensional Brownian motion $\hat W$.\\
Let $(\hat\Omega,\hat\Fc,\hat\P)$ be a complete probability space on which a $m$-dimensional Brownian motion $\hat B=(\hat B_t)_{t\geq0}$ and a $d$-dimensional Brownian motion $\hat W=(\hat W_t)_{t\geq0}$ are defined, with $\hat B$ and $\hat W$ being independent. We denote by $\hat\F^{B,W}=(\hat\Fc_t^{B,W})_{t\geq0}$ the $\hat\P$-completion of the filtration generated by $\hat B$ and $\hat W$. We also assume that there exists a sub-$\sigma$-algebra $\hat\Gc$ of $\hat\Fc$ satisfying the following properties.
\begin{enumerate}[i)]
\item $\hat\Gc$ and $\hat\Fc_\infty^{B,W}$ are independent.
\item $\Pc_2(\R^d)=\{\P_{\hat\xi}$ such that $\hat\xi\colon\hat\Omega\rightarrow\R^d,$ with $\hat\xi$ being $\hat\Gc$-measurable and $\hat\E|\hat\xi|^2<\infty\}$.
\end{enumerate}
We denote by $\hat\F=(\hat\Fc_t)_{t\geq0}$ the $\hat \P$-completed filtration of $(\hat\Gc\vee\hat\Fc_t^{B,W})_{t\geq0}$, for all $t\geq0$. Finally, we denote by $\hat\Ac$ the set of control processes, namely the family of all $\hat\F$-progressively measurable processes $\hat\alpha\colon[0,T]\times\hat\Omega\rightarrow A$.

\noindent Now, for every $\eps\geq0$, $t\in[0,T]$, $\hat\xi\in L^2(\hat\Omega,\hat\Fc_t,\hat\P;\R^d)$, $\hat\alpha\in\hat\Ac$, let $\hat X^{\eps,t,\hat\xi,\hat\alpha}=(\hat X_s^{\eps,t,\hat\xi,\hat\alpha})_{s\in[t,T]}$ be the unique solution to the following controlled McKean-Vlasov stochastic differential equation:
\[
\hat X_s \ = \ \hat \xi + \int_t^s b\big(r,\hat X_r,\P_{\hat X_r},\hat \alpha_r\big)\,dr + \int_t^s \textcolor{black}{\sigma(r,\hat X_r,\hat \alpha_r)}\,d\hat B_r + \eps\,(\hat W_s - \hat W_t), \quad \forall\,s\in[t,T].
\]
Moreover, consider the lifted value function
\[
V_\eps(t,\hat\xi) \ = \ \sup_{\hat\alpha\in\hat\Ac} \hat\E\bigg[\int_t^s f\big(r,\hat X_r^{\eps,t,\hat\xi,\hat\alpha},\P_{\hat X_r^{\eps,t,\hat\xi,\hat\alpha}},\hat\alpha_r\big)\,dr + g\big(\hat X_T^{\eps,t,\hat\xi,\hat\alpha},\P_{\hat X_T^{\eps,t,\hat\xi,\hat\alpha}}\big)\bigg],
\]
for every $t\in[0,T]$, $\hat\xi\in L^2(\hat\Omega,\hat\Fc_t,\hat\P;\R^d)$. Under Assumption \ref{AssA}, from Theorem \ref{T:LawInv} applied in the present probabilistic setting, with $\sigma$ and $B$ replaced respectively by $(\sigma,\eps I_d)$ and $(\hat B,\hat W)$, we know that $V_\eps$ satisfies the law invariance property. Therefore we can define the value function $v_\eps\colon[0,T]\times\Pc_2(\R^d)\rightarrow\R$ as follows:
\begin{equation}\label{v_eps}
v_\eps(t,\mu) \ = \ V_\eps(t,\hat\xi),
\end{equation}
for every $(t,\mu)\in[0,T]\times\Pc_2(\R^d)$ and any $\hat\xi\in L^2(\hat\Omega,\hat\Fc_t,\hat\P;\R^d)$ such that $\P_{\hat\xi}=\mu$. Moreover, applying Proposition \ref{P:ValueFunction} in the present probabilistic setting, it follows that $v_\eps$ is bounded, jointly continuous on $[0,T]\times\Pc_2(\R^d)$ and Lipschitz continuous in the measure: there exists  $L>0$ such that
\begin{align*}
 |v_\eps(t,\mu)-v_\eps(t',\mu')|\ \leq \ L \Wc_2(\mu,\mu'),
\end{align*}
for any $t\in[0,T]$ and $\mu,\mu'\in\Pc_2(\R^d)$.

\begin{Remark}\label{R:v_0}
Notice that, under Assumption \ref{AssA}, it is not immediately clear if $v_0\equiv v$. However, under Assumptions \ref{AssA} and \ref{AssB}, applying Theorem \ref{T:Exist} in the present probabilistic setting we deduce that $v_0$ is a viscosity solution of the Master Bellman equation \eqref{HJB}. As a consequence, under Assumptions \ref{AssA}-\ref{AssB}-\ref{AssC}-\ref{AssD}, by Corollary \ref{C:Uniqueness} we conclude that $v_0\equiv v$.
\end{Remark}

\begin{Lemma}\label{L:eps}
Suppose that Assumption \ref{AssA} holds. Then, there exists a constant $C_{K,T}\geq0$, depending only on $K$ and $T$, such that, for every $\eps\geq0$,
\[
|v_\eps(t,\mu) - v_0(t,\mu)| \ \leq \ C_{K,T}\,\eps,
\]
for every $(t,\mu)\in[0,T]\times\Pc_2(\R^d)$.
\end{Lemma}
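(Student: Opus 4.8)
The plan is to compare the two controlled state processes driven by the \emph{same} control but with dynamics differing only by the additive noise term $\eps(\hat W_s-\hat W_t)$, show that their distance in $L^2$ is of order $\eps$ uniformly in the control, and then transfer this bound to the reward functionals and hence to the value functions.

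First I would fix $t\in[0,T]$, $\hat\xi\in L^2(\hat\Omega,\hat\Fc_t,\hat\P;\R^d)$, $\hat\alpha\in\hat\Ac$ and $\eps\geq0$, abbreviate $\hat X^\eps:=\hat X^{\eps,t,\hat\xi,\hat\alpha}$, and set $D_s:=\hat X_s^\eps-\hat X_s^0$. Subtracting the two state equations gives
\[
D_s \ = \ \int_t^s\!\big(b(r,\hat X_r^\eps,\P_{\hat X_r^\eps},\hat\alpha_r)-b(r,\hat X_r^0,\P_{\hat X_r^0},\hat\alpha_r)\big)\,dr+\int_t^s\!\big(\sigma(r,\hat X_r^\eps,\hat\alpha_r)-\sigma(r,\hat X_r^0,\hat\alpha_r)\big)\,d\hat B_r+\eps\,(\hat W_s-\hat W_t).
\]
Taking $\sup_{s\in[t,u]}$ and expectation, I would bound the drift term by the Cauchy--Schwarz inequality, the stochastic integral by the Burkholder--Davis--Gundy inequality, and the last term by Doob's inequality, which yields $\hat\E[\sup_{s\in[t,u]}|\hat W_s-\hat W_t|^2]\leq c\,d\,(u-t)$. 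Using the Lipschitz bounds of Assumption \ref{AssA}(ii) together with the elementary inequality $\Wc_2(\P_{\hat X_r^\eps},\P_{\hat X_r^0})\leq\hat\E[|D_r|^2]^{1/2}$, and writing $\phi(u):=\hat\E[\sup_{s\in[t,u]}|D_s|^2]$, this produces an estimate of the form $\phi(u)\leq C\int_t^u\phi(r)\,dr+c\,d\,T\,\eps^2$, where $C$ depends only on $K$ and $T$. Gronwall's lemma then gives $\phi(T)\leq C_{K,T}\,\eps^2$, with $C_{K,T}$ depending only on $K$, $T$ (and the fixed dimension $d$), and in particular not on $\hat\xi$, $\hat\alpha$ or $\eps$.

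Next I would insert this into the reward functionals: by the Lipschitz continuity of $f$ and $g$, the triangle inequality and again $\Wc_2(\P_X,\P_Y)\leq\hat\E[|X-Y|^2]^{1/2}$, the difference of the two reward functionals evaluated at the same control $\hat\alpha$ is bounded by $2K(T+1)\sup_{r\in[t,T]}\hat\E[|D_r|^2]^{1/2}\leq C_{K,T}\,\eps$, uniformly in $\hat\alpha\in\hat\Ac$. Taking the supremum over $\hat\alpha$ and using $|\sup_\alpha a_\alpha-\sup_\alpha b_\alpha|\leq\sup_\alpha|a_\alpha-b_\alpha|$ gives $|V_\eps(t,\hat\xi)-V_0(t,\hat\xi)|\leq C_{K,T}\,\eps$; by the law-invariance property and the definition \eqref{v_eps}, this is exactly the claimed bound $|v_\eps(t,\mu)-v_0(t,\mu)|\leq C_{K,T}\,\eps$.

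I do not expect a genuine obstacle here: this is the classical stability estimate for McKean--Vlasov SDEs. The only points requiring a little care are keeping track of the constants so that they depend only on $K$ and $T$, and observing that the McKean--Vlasov coupling of the coefficients to the law of the state creates no additional difficulty, precisely because of the bound $\Wc_2(\P_X,\P_Y)\leq\E[|X-Y|^2]^{1/2}$.
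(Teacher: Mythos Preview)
Your proposal is correct and follows essentially the same approach as the paper: first obtain the $L^2$-stability estimate $\hat\E[\sup_{s\in[t,T]}|\hat X_s^\eps-\hat X_s^0|^2]\leq C\eps^2$ via Gronwall (the paper simply cites this as ``usual calculations'' \`a la Krylov), then propagate it to the reward functionals using the Lipschitz continuity of $f,g$ and the bound $\Wc_2(\P_X,\P_Y)\leq\hat\E[|X-Y|^2]^{1/2}$, and finally pass to the value functions via $|\sup_\alpha a_\alpha-\sup_\alpha b_\alpha|\leq\sup_\alpha|a_\alpha-b_\alpha|$. Your write-up is in fact slightly more detailed on the first step than the paper's own proof.
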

\begin{proof}[\textbf{Proof.}]
By usual calculations (as in \cite[Theorem 2.5.9]{Krylov80}), we obtain
\begin{equation}\label{Estimate_eps}
\hat\E\bigg[\sup_{t\leq s\leq T}\big|\hat X_s^{\eps,t,\hat\xi,\hat\alpha} - \hat X_s^{0,t,\hat\xi,\hat\alpha}\big|^2\bigg] \ \leq \ C_KT\,\textup{e}^{C_KT}\,\eps^2,
\end{equation}
for every $\eps\geq0$, $t\in[0,T]$, $\hat\xi\in L^2(\hat\Omega,\hat\Fc_t,\hat\P;\R^d)$, $\hat\alpha\in\hat\Ac$, for some constant $C_K\geq0$, depending only on $K$. Then, we have
\begin{align*}
|v_\eps(t,\mu) - v_0(t,\mu)| &\leq \sup_{\hat\alpha\in\hat\Ac}\hat\E\bigg[\int_t^T \big|f\big(s,\hat X_s^{\eps,t,\hat\xi,\hat\alpha},\P_{\hat X_s^{\eps,t,\hat\xi,\hat\alpha}},\hat\alpha_s\big) - f\big(s,\hat X_s^{0,t,\hat\xi,\hat\alpha},\P_{\hat X_s^{0,t,\hat\xi,\hat\alpha}},\hat\alpha_s\big)\big|\,ds \\
&\quad + \big|g\big(\hat X_T^{\eps,t,\hat\xi,\hat\alpha},\P_{\hat X_T^{\eps,t,\hat\xi,\hat\alpha}}\big) - g\big(\hat X_T^{0,t,\hat\xi,\hat\alpha},\P_{\hat X_T^{0,t,\hat\xi,\hat\alpha}}\big)\big|\bigg] \\
&\leq \ K\sup_{\hat\alpha\in\hat\Ac}\bigg\{\int_t^T \!\! \Big\{\hat\E\big[\big|\hat X_s^{\eps,t,\hat\xi,\hat\alpha} - \hat X_s^{0,t,\hat\xi,\hat\alpha}\big|\big] + \Wc_2\big(\P_{\hat X_s^{\eps,t,\hat\xi,\hat\alpha}},\P_{\hat X_s^{0,t,\hat\xi,\hat\alpha}}\big)\Big\}ds \\
&\quad + \hat\E\big[\big|\hat X_T^{\eps,t,\hat\xi,\hat\alpha} - \hat X_T^{0,t,\hat\xi,\hat\alpha}\big|\big] + \Wc_2\big(\P_{\hat X_T^{\eps,t,\hat\xi,\hat\alpha}},\P_{\hat X_T^{0,t,\hat\xi,\hat\alpha}}\big)\bigg\} \\
&\leq K\sup_{\hat\alpha\in\hat\Ac}\bigg\{\int_t^T \!\! \Big\{\hat\E\big[\big|\hat X_s^{\eps,t,\hat\xi,\hat\alpha} - \hat X_s^{0,t,\hat\xi,\hat\alpha}\big|^2\big]^{1/2} + \E\big[\big|\hat X_s^{\eps,t,\hat\xi,\hat\alpha} - \hat X_s^{0,t,\hat\xi,\hat\alpha}\big|^2\big]^{1/2}\Big\}ds \\
&\quad + \hat\E\big[\big|\hat X_T^{\eps,t,\hat\xi,\hat\alpha} - \hat X_T^{0,t,\hat\xi,\hat\alpha}\big|^2\big]^{1/2} + \E\big[\big|\hat X_T^{\eps,t,\hat\xi,\hat\alpha} - \hat X_s^{0,t,\hat\xi,\hat\alpha}\big|^2\big]^{1/2}\bigg\} \\
&\leq \ 2K(T+1)\,\sqrt{C_KT\,\textup{e}^{C_KT}}\,\eps,
\end{align*}
where the last inequality follows from estimate \eqref{Estimate_eps}.
\end{proof}

\subsection{Cooperative $n$-player stochastic differential game and\\ propagation of chaos result}
\label{SubS:CooperativeGame}

Let $n\in\N$ and let $(\bar\Omega,\bar\Fc,\bar\P)$ be a complete probability space, supporting independent Brownian motions $\bar B^1,\ldots,\bar B^n,\bar W^1,\ldots,\bar W^n$, with $\bar B^i$ (resp. $\bar W^i$) being $m$-dimensional (resp. $d$-dimensional). Let also $\bar\F^{B,W}=(\bar\Fc_t^{B,W})_{t\geq0}$ denote the $\bar\P$-completion of the filtration generated by $\bar B$ and $\bar W$, with $\bar B=(\bar B^1,\ldots,\bar B^n)$ and $\bar W=(\bar W^1,\ldots,\bar W^n)$. Moreover, let $\bar\Gc$ be a sub-$\sigma$-algebra of $\bar\Fc$ satisfying the following properties.
\begin{enumerate}[i)]
\item $\bar\Gc$ and $\bar\Fc_\infty^{B,W}$ are independent.
\item $\Pc_2(\R^d)=\{\P_{\bar\xi}$ such that $\bar\xi\colon\bar\Omega\rightarrow\R^d,$ with $\bar\xi$ being $\bar\Gc$-measurable and $\bar\E|\bar\xi|^2<\infty\}$.
\end{enumerate}
Furthermore, let $\bar\F=(\bar\Fc_t)_{t\geq0}$ be given by $\bar\Fc_t:=\bar\Gc\vee\bar\Fc_t^{B,W}$, for every $t\geq0$. Finally, let $\bar\Ac^n$ be the family of all $\bar\F$-progressively measurable processes $\bar\alpha=(\bar\alpha^1,\ldots,\bar\alpha^n)\colon[0,T]\times\bar\Omega\rightarrow A^n$. Now, for every $\eps>0$, $t\in[0,T]$, $\bar\alpha\in\bar\Ac^n$, $\bar\xi^1,\ldots,\bar\xi^n\in L^2(\bar\Omega,\bar\Fc_t,\bar\P;\R^d)$, with $\bar\xi:=(\bar\xi^1,\ldots,\bar\xi^n)$, let $\bar X^{\eps,t,\bar\xi,\bar\alpha}=(\bar X^{1,\eps,t,\bar\xi,\bar\alpha},\ldots,\bar X^{n,\eps,t,\bar\xi,\bar\alpha})$ be the unique solution to the following system of controlled stochastic differential equations:
\begin{equation}\label{SDE_bar_X}
\bar X_s^i \ = \ \bar\xi^i + \int_t^s b(r,\bar X_r^i,\widehat\mu_r^n,\bar\alpha_r^i)\,dr + \int_t^s \textcolor{black}{\sigma(r,\bar X_r^i,\bar\alpha_r^i)}\,dB_r^i + \eps\,(\bar W_s^i - \bar W_t^i), \qquad \forall\,s\in[t,T],
\end{equation}
for $i=1,\ldots,n$, with
\[
\widehat\mu_r^n \ = \ \frac{1}{n} \sum_{j=1}^n \delta_{\bar X_r^j}, \qquad \forall\,r\in[t,T].
\]
We denote $\widehat\mu_r^{n,\eps,t,\bar\xi,\bar\alpha}=\frac{1}{n} \sum_{j=1}^n \delta_{\bar X_r^{j,\eps,t,\bar\xi,\bar\alpha}}$. 
We consider the cooperative $n$-players game where
a planner maximizes, over $\bar\alpha\in\bar\Ac^n$,
the payoff
\begin{equation*}\label{payofftilde_v_eps,n}
\tilde J_{\eps,n}(t,\bar\mu;\bar\alpha):=
\frac{1}{n}\sum_{i=1}^n\bar\E\bigg[\int_t^T f\big(s,\bar X_s^{i,\eps,t,\bar\xi,\bar\alpha},\widehat\mu_s^{n,\eps,t,\bar\xi,\bar\alpha},\bar\alpha_s^i\big)\,ds + g\big(\bar X_T^{i,\eps,t,\bar\xi,\bar\alpha},\widehat\mu_T^{n,\eps,t,\bar\xi,\bar\alpha}\big)\bigg],
\end{equation*}
Then, the value function $\tilde v_{\eps,n}\colon[0,T]\times\Pc_2(\R^{dn})\rightarrow\R$ of such  cooperative $n$-player game is given by
\begin{equation}\label{tilde_v_eps,n}
\tilde v_{\eps,n}(t,\bar\mu) \ = \ \sup_{\bar\alpha\in\bar\Ac^n}\frac{1}{n}\sum_{i=1}^n\bar\E\bigg[\int_t^T f\big(s,\bar X_s^{i,\eps,t,\bar\xi,\bar\alpha},\widehat\mu_s^{n,\eps,t,\bar\xi,\bar\alpha},\bar\alpha_s^i\big)\,ds + g\big(\bar X_T^{i,\eps,t,\bar\xi,\bar\alpha},\widehat\mu_T^{n,\eps,t,\bar\xi,\bar\alpha}\big)\bigg],
\end{equation}
for every $t\in[0,T]$, $\bar\mu\in\Pc_2(\R^{dn})$, with $\bar\xi\in L^2(\bar\Omega,\bar\Fc_t,\bar\P;\R^{dn})$ such that $\P_{\bar\xi}=\bar\mu$.\\
We also introduce the following approximation of the value function: we call
$$\tilde v_{\eps,n,m}\colon[0,T]\times\Pc_2(\R^{dn})\rightarrow\R$$ the map given by 
\begin{align*}
\tilde v_{\eps,n,m}(t,\bar\mu) \ = \ \sup_{\bar\alpha\in\bar\Ac^n}\frac{1}{n}\sum_{i=1}^n\bar\E\bigg[\int_t^T f_{n,m}^i\big(s,\textcolor{black}{\bar X_s^{1,\textcolor{black}{m},\eps,t,\bar\xi,\bar\alpha},\ldots,\bar X_s^{n,\textcolor{black}{m},\eps,t,\bar\xi,\bar\alpha}},\bar\alpha_s^i\big)\,ds \\
+ \; g_{n,m}^i\big(\textcolor{black}{\bar X_T^{1,\textcolor{black}{m},\eps,t,\bar\xi,\bar\alpha},\ldots,\bar X_T^{n,\textcolor{black}{m},\eps,t,\bar\xi,\bar\alpha}}\big)&\bigg], \notag
\end{align*}
\textcolor{black}{where $\bar X^{i,\textcolor{black}{m},\eps,t,\bar\xi,\bar\alpha}$ solves equation \eqref{SDE_bar_X} with $b$ replaced by $b_{n,m}^i$, where
$$
b_{n,m}^i\colon[0,T]\times\R^{dn}\times A\rightarrow\R^d,
\qquad
f_{n,m}^i\colon[0,T]\times\R^{dn}\times A\rightarrow\R,
\qquad
g_{n,m}^i\colon\R^{dn}\rightarrow\R
$$ are smooth approximations of $b$, $f$, $g$ defined as follows:}
\begin{align*}
\textcolor{black}{b_{n,m}^i(t,\bar x,a)} \ &\textcolor{black}{=} \ \textcolor{black}{m^{nd\textcolor{black}{+1}}
\int_{\R^{dn\textcolor{black}{+1}}}b\bigg(\textcolor{black}
{T\wedge(t-s)^+},x_i-y_i,{1\over n}
\sum_{j=1}^n \delta_{x_j-y_j},a\bigg)\textcolor{black}{\zeta(ms)}\prod_{j=1}^{n}\Phi(my_j)dy_j\textcolor{black}{ds},}\\
f_{n,m}^i(t,\bar x,a) \ &= \ m^{nd\textcolor{black}{+1}}\int_{\R^{dn\textcolor{black}{+1}}}f\bigg(\textcolor{black}{T\wedge(t-s)^+},x_i-y_i,{1\over n}\sum_{j=1}^n \delta_{x_j-y_j},a\bigg)\textcolor{black}{\zeta(ms)}\prod_{j=1}^{n}\Phi(my_j)dy_j\textcolor{black}{ds},\\
g_{n,m}^i(\bar x) \ &= \ m^{nd}\int_{\R^{dn}}g\bigg(x_i-y_i,{1\over n}\sum_{j=1}^n \delta_{x_j-y_j}\bigg)\prod_{j=1}^{n}\Phi(my_j)dy_j,
\end{align*}
for all $n,m\in\N$, $i=1,\ldots,n$, $\bar x=(x_1,\ldots,x_n)\in\R^{dn}$, $(t,a)\in[0,T]\times A$, with $\Phi\colon\R^d\rightarrow[0,+\infty)$ \textcolor{black}{and $\zeta\colon\R\rightarrow[0,+\infty)$} being $C^\infty$ functions with compact support satisfying $\int_{\R^d}\Phi(y)\,dy=1$ \textcolor{black}{and $\int_{-\infty}^{+\infty}\zeta(s)\,ds=1$}.
 
\begin{Lemma}
\label{lm:conv}
Suppose that Assumptions \ref{AssA} and \ref{AssB} hold. Let $\widehat\mu^{n,\bar x}$ be given by
\[
\widehat\mu^{n,\bar x} \ := \ \frac{1}{n} \sum_{j=1}^n \delta_{x_j}
\]
Let $i=1,\dots,n$.
We have
$$
\lim_{m\to+\infty}{b_{n,m}^i(t,\bar x,a)}
=b\left(t,x_i,\widehat\mu^{n,\bar x}\right),
\qquad
\lim_{m\to+\infty}{f_{n,m}^i(t,\bar x,a)}
=f\left(t,x_i,\widehat\mu^{n,\bar x}\right),
$$
uniformly for $(t,\bar x,a)$ in $[0,T]\times \R^{dn}\times A$.
Moreover,
$$
\lim_{m\to+\infty}{g_{n,m}^i(\bar x)}
=g\left(x_i,\widehat\mu^{n,\bar x}\right),
$$
uniformly for $\bar x$ in $\R^{dn}$.
\\
Furthermore we have the following estimates
\begin{align*}
\big|b\left(t,x_i,\widehat\mu^{n,\bar x},a\right) - b_{n,m}^i(t,\bar x,a)\big|
&\leq \
Km\int_\R \left|t-\left(T\wedge(t-s)^+\right)\right|^\beta
{\zeta(ms)}ds
\notag
\\
&+
K\,m^{nd}\int_{\R^{dn}}\bigg(|y_i| + \frac{1}{n}\sum_{j=1}^n |y_j|\bigg)\prod_{j=1}^{n}\Phi(my_j)dy_j
\end{align*}
\begin{align*}
\big|f\left(t,x_i,\widehat\mu^{n,\bar x},a\right) - f_{n,m}^i(t,\bar x,a)\big|
&\leq \
Km\int_\R \left|t-\left(T\wedge(t-s)^+\right)\right|^\beta
{\zeta(ms)}ds
\notag
\\
&+
K\,m^{nd}\int_{\R^{dn}}\bigg(|y_i| + \frac{1}{n}\sum_{j=1}^n |y_j|\bigg)\prod_{j=1}^{n}\Phi(my_j)dy_j
\end{align*}
\begin{equation}\label{eq:limnewg}
\big|g(x_i,\widehat\mu^{n,\bar x}) - g_{n,m}^i(x_1,\ldots,x_n)\big|
\leq \ K\,m^{nd}\int_{\R^{dn}}\bigg(|y_i| + \frac{1}{n}\sum_{j=1}^n |y_j|\bigg)\prod_{j=1}^{n}\Phi(my_j)dy_j
\end{equation}
Finally
\begin{align}
&
\big|b_{n,m}^i(t,\bar x,a) - b_{n,m}^i(t,\bar z,a)\big|
\vee
\big|f_{n,m}^i(t,\bar x,a) - f_{n,m}^i(t,\bar z,a)\big|
\vee
\big|g_{n,m}^i(\bar x) - g_{n,m}^i(\bar z)\big|
\notag\\
& \ \leq \ K\left[|x_i-z_i| + \frac{1}{n}\sum_{j=1}^n |x_j-z_j|\right].
\label{eq:Lipbfg}
\end{align}
\end{Lemma}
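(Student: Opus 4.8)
I will treat the three families $b_{n,m}^i$, $f_{n,m}^i$, $g_{n,m}^i$ uniformly, since each is a mollification of the corresponding coefficient; the argument for $b_{n,m}^i$ is the representative one, that for $f_{n,m}^i$ is formally identical, and that for $g_{n,m}^i$ is the same but without the time mollification (which is why its estimate \eqref{eq:limnewg} carries no time term). The first step is a normalisation: because $\int_{\R^d}\Phi=1$ and $\int_{\R}\zeta=1$, the change of variables $u=ms$, $z_j=my_j$ rewrites $b_{n,m}^i(t,\bar x,a)$ as the average of $b\big(T\wedge(t-u/m)^+,\,x_i-z_i/m,\,\frac{1}{n}\sum_j\delta_{x_j-z_j/m},\,a\big)$ against the product probability measure $\zeta(u)\,du\otimes\prod_j\Phi(z_j)\,dz_j$. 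Since $b(t,x_i,\widehat\mu^{n,\bar x},a)$ equals its own average against the same measure, the difference $b(t,x_i,\widehat\mu^{n,\bar x},a)-b_{n,m}^i(t,\bar x,a)$ becomes the average of a single increment of $b$ between its ``perturbed'' and ``unperturbed'' arguments.

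Next I would bound that increment by the triangle inequality, moving first the time argument, then the state argument, then the measure argument. The truncation $T\wedge(t-s)^+$ is exactly what makes the time step work: for $t\in[0,T]$ one has $T\wedge(t-u/m)^+\in[0,T]$ and $|t-T\wedge(t-u/m)^+|\le|u|/m$, so Assumption \ref{AssB} bounds the time increment by $K|u/m|^\beta$; Assumption \ref{AssA}(ii) bounds the state increment by $K|z_i|/m$ and, using $\Wc_2\big(\frac{1}{n}\sum_j\delta_{a_j},\frac{1}{n}\sum_j\delta_{b_j}\big)\le\big(\frac{1}{n}\sum_j|a_j-b_j|^2\big)^{1/2}$, the measure increment by $K\big(\frac{1}{n}\sum_j|z_j|^2\big)^{1/2}/m$. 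Reverting to the original variables and performing the $y$- and $s$-integrations, which contribute the factors $m^{-nd}$ and $m^{-1}$, produces the displayed estimates for $b$, $f$ and $g$. Uniform convergence then follows at once: in the normalised form each bound is an integral of a quantity of order $1/m$ against $\zeta(u)\,du\otimes\prod_j\Phi(z_j)\,dz_j$, namely $\le Km^{-\beta}\int|u|^\beta\zeta(u)\,du$ for the time term, $\le Km^{-1}\int|z|\,\Phi(z)\,dz$ for the state term, and $\le Km^{-1}\big(\int|z|^2\Phi(z)\,dz\big)^{1/2}$ for the measure term by Jensen's inequality; all these constants are finite because $\Phi$ and $\zeta$ have compact support, they are independent of $(t,\bar x,a)$ because the H\"older and Lipschitz moduli of $b$, $f$, $g$ are, and they tend to $0$ as $m\to+\infty$.

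Finally, the Lipschitz estimate \eqref{eq:Lipbfg} is the easy part: in $b_{n,m}^i(t,\bar x,a)-b_{n,m}^i(t,\bar z,a)$ the mollifier, the time argument and $a$ are unchanged, so by Assumption \ref{AssA}(ii) the integrand difference is at most $K\big(|x_i-z_i|+\Wc_2\big(\frac{1}{n}\sum_j\delta_{x_j-y_j},\frac{1}{n}\sum_j\delta_{z_j-y_j}\big)\big)$, which is controlled by $K\big(|x_i-z_i|+\frac{1}{n}\sum_j|x_j-z_j|\big)$; this bound involves neither $s$ nor $y$, so it survives the averaging, and the same computation gives the estimate for $f_{n,m}^i$ and $g_{n,m}^i$. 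I do not expect a genuine obstacle in this lemma: the only points that require a little care are the time mollification — exploiting the truncation to keep both time arguments in $[0,T]$ so that Assumption \ref{AssB} applies, and checking that $|t-T\wedge(t-s)^+|\to0$ uniformly in $t$ — and making sure every intermediate bound is free of $(t,\bar x,a)$, which is precisely what the compact support of $\Phi,\zeta$ together with the uniform continuity moduli of the coefficients deliver.
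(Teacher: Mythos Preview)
Your proposal is correct and follows essentially the same route as the paper: rewrite the mollified coefficient as an average against a probability kernel, bound the increment by the triangle inequality using Assumption~\ref{AssB} for the time shift and Assumption~\ref{AssA} for the state and measure shifts, and note that every resulting bound is free of $(t,\bar x,a)$ because the moduli are uniform and $\Phi,\zeta$ have compact support; the Lipschitz estimate \eqref{eq:Lipbfg} is handled identically. One small point: the diagonal coupling gives $\Wc_2\big(\tfrac1n\sum_j\delta_{a_j},\tfrac1n\sum_j\delta_{b_j}\big)\le\big(\tfrac1n\sum_j|a_j-b_j|^2\big)^{1/2}$, as you write, which produces an integrand $\big(\tfrac1n\sum_j|y_j|^2\big)^{1/2}$ rather than the $\tfrac1n\sum_j|y_j|$ appearing literally in the displayed estimates (the paper asserts the $\ell^1$ form directly); either form is equally sufficient for the uniform convergence and for every use of the lemma downstream.
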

\begin{proof}[\textbf{Proof.}]
We first prove the claims for $g$.
From the definition of $g_{n,m}^i$ we get
\begin{align*}
&\big|g(x_i,\widehat\mu^{n,\bar x}) - g_{n,m}^i(x_1,\ldots,x_n)\big| \notag \\
&\leq \ m^{nd}\int_{\R^{dn}}\bigg|g(x_i,\widehat\mu^{n,\bar x}) - g\bigg(x_i-y_i,{1\over n}\sum_{j=1}^n \delta_{x_j-y_j}\bigg)\bigg|\prod_{j=1}^{n}\Phi(my_j)dy_j \notag \\
&\leq \ K\,m^{nd}\int_{\R^{dn}}\bigg(|y_i| + \frac{1}{n}\sum_{j=1}^n |y_j|\bigg)\prod_{j=1}^{n}\Phi(my_j)dy_j,
\end{align*}
where the last inequality follows from the Lipschitz continuity of $g$, and also from the following:
\[
\Wc_2\bigg(\widehat\mu^{n,\bar x},{1\over n}\sum_{j=1}^n \delta_{x_j-y_j}\bigg) \ = \ \Wc_2\bigg({1\over n}\sum_{j=1}^n \delta_{x_j},{1\over n}\sum_{j=1}^n \delta_{x_j-y_j}\bigg) \ \leq \ \frac{1}{n} \sum_{j=1}^n |y_j|,
\]
where the last inequality follows from the definition of $\Wc_2$ (see \eqref{Wasserstein}) taking the probability measure $\pi$ on $\R^d\times\R^d$ satisfying $\pi(\{(x_j,x_j-y_j)\})=\frac{1}{n}$, $\forall\,j=1,\ldots,n$.
\\
For the other claim on $g$ we use that
\begin{align*}
&\big|g_{n,m}^i(\bar x) - g_{n,m}^i(\bar z)\big| \ \leq \ m^{nd}\int_{\R^{dn}}\bigg|g\bigg(x_i-y_i,{1\over n}\sum_{j=1}^n \delta_{x_j-y_j}\bigg) \\
&- g\bigg(z_i-y_i,{1\over n}\sum_{j=1}^n \delta_{z_j-y_j}\bigg)\bigg|\prod_{j=1}^{n}\Phi(my_j)dy_j \ \leq \
K\left[|x_i-z_i| + \frac{1}{n}\sum_{j=1}^n |x_j-z_j|\right],
\end{align*}
for every $i=1,\ldots,n$, where the last inequality follows from the Lipschitz continuity of $g$, and also from the following:
\[
\Wc_2\bigg(\frac{1}{n}\sum_{j=1}^n \delta_{x_j-y_j},\frac{1}{n}\sum_{j=1}^n \delta_{z_j-y_j}\bigg) \ \leq \ \frac{1}{n} \sum_{j=1}^n |x_j-z_j|,
\]
which follows from the definition of $\Wc_2$ (see \eqref{Wasserstein}) taking the probability measure $\pi$ on $\R^d\times\R^d$ such that $\pi(\{(x_j-y_j,z_j-y_j)\})=\frac{1}{n}$, $\forall\,j=1,\ldots,n$.
\\
Now we prove the claims for $b$ (the ones for $f$ are proved exactly in the same way)
\begin{align*}
&\big|b(t,x_i,\widehat\mu^{n,\bar x},a)
- b_{n,m}^i(t,\bar x,a)\big| \ \leq \ m^{nd+1}\int_{\R^{dn+1}}
\bigg|b(t,x_i,\widehat\mu^{n,\bar x},a) \\
&- b\bigg(T\wedge(t-s)^+,x_i-y_i,{1\over n}\sum_{j=1}^n \delta_{x_j-y_j},a\bigg)\bigg|\zeta(ms)\prod_{j=1}^{n}
\Phi(my_j)dy_jds \ \leq \ m\int_\R
\bigg|b(t,x_i,\widehat\mu^{n,\bar x},a) \\
&- b\bigg(T\wedge(t-s)^+,x_i,\widehat\mu^{n,\bar x},a\bigg)\bigg|\zeta(ms)ds + m^{nd}\int_{\R^{dn}}
\bigg|b(T\wedge(t-s)^+,x_i,\widehat\mu^{n,\bar x},a) \\
&- b\bigg(T\wedge(t-s)^+,x_i-y_i,{1\over n}\sum_{j=1}^n \delta_{x_j-y_j},a\bigg)\bigg|\zeta(ms)\prod_{j=1}^{n}
\Phi(my_j)dy_j ds.
\end{align*}
Thanks to the Lipschitz property of $b$ (see Assumption \ref{AssA}-(ii)) the second integral is estimated as in the case of $g$ considered above.
Moreover, the first integral, thanks to the Assumption \ref{AssB} is estimated by $K|t-(T\wedge(t-s)^+)|^\beta$.
Finally, the Lipschitz estimates for $b$ and $f$ are proved exactly in the same way as for $g$ using Assumption \ref{AssA} (ii)-(iii).
 \end{proof}

\begin{Remark}
Notice that it is not a priori clear the fact that the right-hand side of \eqref{tilde_v_eps,n} depends on $\bar\xi$ only through its law $\bar\mu$. However, as the cooperative $n$-player game is an example of mean field control problem (indeed, it is a standard stochastic optimal control problem) we can apply the results of Section \ref{S:MKV_Control_Pb} to it. In particular, from Theorem \ref{T:LawInv} we deduce the law invariance property, which explains why we can consider the value function $\tilde v_{\eps,n}$ (and, similarly, $\tilde v_{\eps,n,m}$), which depends only on $\bar\mu$ rather than on $\bar\xi$.
\end{Remark}

\noindent We also consider the functions $v_{\eps,n,m},\ v_{\eps,n}\colon[0,T]\times\Pc_2(\R^d)\rightarrow\R$ defined as
\begin{equation}\label{v_eps,n}
v_{\eps,n,m}(t,\mu) \ = \ \tilde v_{\eps,n,m}(t,\mu\otimes\cdots\otimes\mu) \qquad \text{ and } \qquad
v_{\eps,n}(t,\mu) \ = \ \tilde v_{\eps,n}(t,\mu\otimes\cdots\otimes\mu),
\end{equation}
for every $(t,\mu)\in[0,T]\times\Pc_2(\R^d)$.

We first show that the analogous of Lemma \ref{L:eps} holds for $v_{\eps,n,m}$.
\\
\begin{Lemma}\label{L:epsnm}
Suppose that Assumption \ref{AssA} holds. Then, there exists a constant $C_{K,T}\geq0$, depending only on $K$ and $T$, such that, for every $\eps\geq0$,
\[
|v_{\eps,n,m}(t,\mu) - v_{0,n,m}(t,\mu)| \ \leq \ C_{K,T}\,\eps,
\]
for every $(t,\mu)\in[0,T]\times\Pc_2(\R^d)$.
\end{Lemma}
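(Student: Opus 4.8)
The plan is to repeat verbatim the argument of Lemma \ref{L:eps}, the only extra point being to verify that every constant can be chosen independently of $n$ and $m$. Fix $t\in[0,T]$, $\bar\alpha\in\bar\Ac^n$ and $\bar\xi\in L^2(\bar\Omega,\bar\Fc_t,\bar\P;\R^{dn})$, and write $\bar X^{m,\eps}=(\bar X^{1,m,\eps,t,\bar\xi,\bar\alpha},\dots,\bar X^{n,m,\eps,t,\bar\xi,\bar\alpha})$ for the solution of \eqref{SDE_bar_X} with $b$ replaced by $b_{n,m}^i$ and perturbation parameter $\eps\ge 0$; set $\Delta_s^i:=\bar X_s^{i,m,\eps}-\bar X_s^{i,m,0}$. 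The first step is to establish the analogue of \eqref{Estimate_eps}, namely
\[
\frac1n\sum_{i=1}^n\bar\E\Big[\sup_{t\le s\le T}\big|\bar X_s^{i,m,\eps}-\bar X_s^{i,m,0}\big|^2\Big]\ \le\ C_{K,T}\,\eps^2,
\]
with $C_{K,T}$ independent of $\eps,n,m,t,\bar\xi,\bar\alpha$. Subtracting the two systems, using the Lipschitz estimate \eqref{eq:Lipbfg} for $b_{n,m}^i$, the Lipschitz continuity of $\sigma$ from Assumption \ref{AssA}-(ii), the Burkholder–Davis–Gundy inequality for the stochastic integral and $|\bar W_s^i-\bar W_t^i|^2$ for the perturbation, one is led to a Gronwall inequality for $S(s):=\frac1n\sum_{i=1}^n\bar\E[\sup_{t\le r\le s}|\Delta_r^i|^2]$; the mean-field coupling term $\frac1n\sum_j|\Delta_r^j|$ produced by \eqref{eq:Lipbfg}, once squared, integrated and averaged over $i$, is bounded by $S(r)$ through Jensen's inequality, so the coefficients of the Gronwall inequality depend only on $K$ and $T$.

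The second step is to transfer this to the value functions exactly as in Lemma \ref{L:eps}. Since $v_{\eps,n,m}(t,\mu)=\tilde v_{\eps,n,m}(t,\mu\otimes\cdots\otimes\mu)$ by \eqref{v_eps,n}, the difference $|v_{\eps,n,m}(t,\mu)-v_{0,n,m}(t,\mu)|$ is bounded by the supremum over $\bar\alpha\in\bar\Ac^n$ of
\[
\frac1n\sum_{i=1}^n\bar\E\bigg[\int_t^T\big|f_{n,m}^i(s,\bar X_s^{m,\eps},\bar\alpha_s^i)-f_{n,m}^i(s,\bar X_s^{m,0},\bar\alpha_s^i)\big|\,ds+\big|g_{n,m}^i(\bar X_T^{m,\eps})-g_{n,m}^i(\bar X_T^{m,0})\big|\bigg].
\]
Applying the Lipschitz estimates \eqref{eq:Lipbfg} for $f_{n,m}^i$ and $g_{n,m}^i$, each summand is controlled by $K\big(|\Delta_s^i|+\frac1n\sum_j|\Delta_s^j|\big)$ (and similarly at time $T$); summing over $i$, averaging, using $\frac1n\sum_i\frac1n\sum_j|\Delta_s^j|=\frac1n\sum_j|\Delta_s^j|$, then Cauchy–Schwarz in $\omega$ together with the state estimate above, one obtains the claimed bound $C_{K,T}\,\eps$ (with a possibly larger $C_{K,T}$, still depending only on $K$ and $T$).

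The only delicate point, which I expect to be the main (mild) obstacle, is the uniformity of the constants in $n$: one must make sure that in the Gronwall argument the contributions of the coupling term $\frac1n\sum_j|\Delta_r^j|$ in \eqref{eq:Lipbfg} collapse, after squaring, taking expectation and averaging over $i$, into a term proportional to $S(r)$ with a constant not depending on $n$ — this is precisely Jensen's inequality $\big(\frac1n\sum_j|\Delta_r^j|\big)^2\le\frac1n\sum_j|\Delta_r^j|^2$. Once this is observed, the remainder is a routine repetition of the computation of Lemma \ref{L:eps}, with $f,g$ replaced by $f_{n,m}^i,g_{n,m}^i$ and the McKean–Vlasov SDE replaced by the system \eqref{SDE_bar_X}.
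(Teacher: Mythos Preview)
Your proposal is correct and follows the same route as the paper: the paper explicitly states that ``the proof is similar to the one of Lemma \ref{L:eps}'', derives the state estimate \eqref{Estimate_eps} for $\bar X^{i,m,\eps}-\bar X^{i,m,0}$ by a Gronwall argument (citing \cite[Theorem 2.5.9]{Krylov80}), and then transfers it to the value functions via the Lipschitz bounds \eqref{eq:Lipbfg} on $f_{n,m}^i$ and $g_{n,m}^i$. Your emphasis on using Jensen to collapse the mean-field coupling term $\tfrac1n\sum_j|\Delta_r^j|$ and thereby keep the Gronwall constants independent of $n$ is exactly the point the paper takes for granted.
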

\begin{proof}[\textbf{Proof.}]
The proof is similar to the one of Lemma \ref{L:eps}.
We provide a sketch for the reader convenience.
By usual calculations (as in \cite[Theorem 2.5.9]{Krylov80}), we obtain
\begin{equation}\label{Estimate_eps}
\bar\E\bigg[\sup_{t\leq s\leq T}\big|
\bar X_s^{i,m,\eps,t,\bar\xi,\bar\alpha}-
\bar X_s^{i,m,0,t,\bar\xi,\bar\alpha}\big|^2\bigg] \ \leq \ C_KT\,\textup{e}^{C_KT}\,\eps^2,
\end{equation}
for every $i=1,\dots,n$, $m\in \N$, $\eps\geq0$, $t\in[0,T]$,
$\bar\xi\in L^2(\bar\Omega,\bar\Fc_t,\bar\P;\R^{dn})$,
$\bar\alpha\in\bar\Ac^n$, for some constant $C_K\geq0$, depending only on $K$.
Then, we have, writing
$\bar X_s^{m,\eps,t,\bar\xi,\bar\alpha}$
for
$\big(\bar X_s^{1,m,\eps,t,\bar\xi,\bar\alpha},
\dots,\bar X_s^{n,m,\eps,t,\bar\xi,\bar\alpha}\big)$,
\begin{align*}
&|v_{\eps,n,m}(t,\mu) - v_{0,n,m}(t,\mu)| \ \leq \ \frac1n\sum_{i=1}^{n}\sup_{\bar\alpha\in\bar\Ac^n}
\bar\E\bigg[\int_t^T \big|
f_{n,m}^i\big(s,\bar X_s^{m,\eps,t,\bar\xi,\bar\alpha},
\bar\alpha^i_s\big) - f_{n,m}^i\big(s,\bar X_s^{m,0,t,\bar\xi,\bar\alpha},
,\bar\alpha^i_s\big)\big|\,ds \\
&
+ \big|
g_{n,m}^i\big(\bar X_T^{m,\eps,t,\bar\xi,\bar\alpha}
\big)
- g_{n,m}^i\big(\bar X_T^{m,0,t,\bar\xi,\bar\alpha}
\big)\big|\bigg] \ \leq \ \frac Kn\sum_{i=1}^{n}
\sup_{\bar\alpha\in\bar\Ac^n}\bigg\{\int_t^T \!\! \Big\{\bar\E\big[\big|\bar X_s^{m,\eps,t,\bar\xi,\bar\alpha} - \bar X_s^{m,0,t,\bar\xi,\bar\alpha}\big|\big]
\Big\}ds \\
&+ \bar\E\big[\big|\bar X_T^{m,\eps,t,\bar\xi,\bar\alpha} - \bar X_T^{0,t,\bar\xi,\bar\alpha}\big|\big]
\big)\bigg\} \ \leq \ \frac Kn\sum_{i=1}^{n}
\sup_{\bar\alpha\in\bar\Ac^n}\bigg\{\int_t^T \!\! \Big\{\bar\E\big[\big|\bar X_s^{m,\eps,t,\bar\xi,\bar\alpha} - \bar X_s^{m,0,t,\bar\xi,\bar\alpha}\big|^2\big]^{1/2} \\
&+ \E\big[\big|\bar X_s^{\eps,t,\bar\xi,\bar\alpha} - \bar X_s^{0,t,\bar\xi,\bar\alpha}\big|^2\big]^{1/2}\Big\}ds + \bar\E\big[\big|\bar X_T^{\eps,t,\bar\xi,\bar\alpha} - \bar X_T^{0,t,\bar\xi,\bar\alpha}\big|^2\big]^{1/2} + \E\big[\big|\bar X_T^{\eps,t,\bar\xi,\bar\alpha} - \bar X_s^{0,t,\bar\xi,\bar\alpha}\big|^2\big]^{1/2}\bigg\} \\
&\leq \ 2K(T+1)\,\sqrt{C_KT\,\textup{e}^{C_KT}}\,\eps,
\end{align*}
where the last inequality follows from estimate \eqref{Estimate_eps}.
\end{proof}
Now, we can state the following propagation of chaos result for $v_{\eps,n,m}$ (a more general propagation of chaos result holds for $\tilde v_{\eps,n,m}$, see \cite[Theorem 2.12]{Lacker}).

\begin{Theorem}\label{T:PropagChaos}
Suppose that  Assumptions \ref{AssA}, \ref{AssB}, \ref{AssC} hold. Let $\eps\geq0$ and $(t,\mu)\in\Pc_2(\R^d)$. If there exists $q>2$ such that $\mu\in\Pc_q(\R^d)$, then
\[
\lim_{n\rightarrow+\infty}\lim_{m\rightarrow+\infty} v_{\eps,n,m}(t,\mu) \ = \ v_\eps(t,\mu).
\]
\end{Theorem}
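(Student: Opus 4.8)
The plan is to prove Theorem~\ref{T:PropagChaos} in two halves: first, to reduce the problem to a propagation of chaos statement for the $\eps$-regularized problems with the smooth coefficients $b_{n,m}^i,f_{n,m}^i,g_{n,m}^i$, and second, to invoke the abstract convergence result \cite[Theorem 2.12]{Lacker} for the cooperative $n$-player game and pass the approximation parameter $m$ to the limit. The key observation is that the smoothed coefficients, by the estimates \eqref{eq:Lipbfg} of Lemma~\ref{lm:conv}, are Lipschitz in $\bar x$ uniformly in $m$ with the \emph{symmetric} modulus $|x_i-z_i|+\tfrac1n\sum_j|x_j-z_j|$; this is precisely the structure needed so that the cooperative $n$-player game with these coefficients fits the framework of \cite{Lacker}. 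Moreover, by Lemma~\ref{lm:conv} the smoothed coefficients converge to $b(t,x_i,\widehat\mu^{n,\bar x}),\,f(t,x_i,\widehat\mu^{n,\bar x}),\,g(x_i,\widehat\mu^{n,\bar x})$ uniformly on $[0,T]\times\R^{dn}\times A$; since these limit coefficients are exactly those of the (genuine) cooperative $n$-player game whose value is $v_{\eps,n}$, a standard stability estimate for stochastic control problems gives $\lim_{m\to\infty}v_{\eps,n,m}(t,\mu)=v_{\eps,n}(t,\mu)$ for each fixed $n$.

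The first step I would carry out is therefore the inner limit in $m$. Fix $\eps\geq0$, $n\in\N$, $(t,\mu)\in[0,T]\times\Pc_2(\R^d)$. Using the flow $\bar X^{i,m,\eps,t,\bar\xi,\bar\alpha}$ driven by $b_{n,m}^i$ and comparing it to the flow $\bar X^{i,\eps,t,\bar\xi,\bar\alpha}$ driven by $b(\cdot,\cdot,\widehat\mu^n_\cdot,\cdot)$, Gr\"onwall together with the uniform-in-$m$ Lipschitz bound \eqref{eq:Lipbfg} and the uniform convergence of $b_{n,m}^i$ to $b(\cdot,\cdot,\widehat\mu^{n,\cdot},\cdot)$ yields $\bar\E[\sup_{s}|\bar X^{i,m,\eps}_s-\bar X^{i,\eps}_s|^2]\to0$ as $m\to\infty$, uniformly over $\bar\alpha\in\bar\Ac^n$ and over initial data with bounded second moment. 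Combining this with the uniform convergence of $f_{n,m}^i$ and $g_{n,m}^i$ and the Lipschitz continuity of $f,g$ (Assumption~\ref{AssA}), one gets $|v_{\eps,n,m}(t,\mu)-v_{\eps,n}(t,\mu)|\to0$; note this is uniform in $\eps$ by the same argument (much as in Lemma~\ref{L:epsnm}, which already provides the uniform-in-$\eps$ bound we need for the final $\eps\to0$ regularity). The second step is then the outer limit in $n$: for the genuine cooperative $n$-player game with value $\tilde v_{\eps,n}$ and its symmetrized restriction $v_{\eps,n}(t,\mu)=\tilde v_{\eps,n}(t,\mu\otimes\cdots\otimes\mu)$, the propagation of chaos theorem \cite[Theorem 2.12]{Lacker} (whose hypotheses are met because, for $\eps>0$, the system is non-degenerate, the coefficients are Lipschitz with the interaction only through the empirical measure, $A$ is compact by Assumption~\ref{AssC}, and $\mu\in\Pc_q(\R^d)$ for some $q>2$) gives $\lim_{n\to\infty}v_{\eps,n}(t,\mu)=v_\eps(t,\mu)$. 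Chaining the two limits yields $\lim_{n\to\infty}\lim_{m\to\infty}v_{\eps,n,m}(t,\mu)=v_\eps(t,\mu)$.

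The main obstacle I anticipate is making the application of \cite[Theorem 2.12]{Lacker} rigorous, in particular checking that the boundedness and Lipschitz hypotheses required there are exactly those guaranteed by Assumptions~\ref{AssA}--\ref{AssC} and that the (possibly) degenerate case $\eps=0$ is still covered — for $\eps=0$ one must either appeal to the version of \cite{Lacker} that does not need uniform ellipticity, or else first prove the result for $\eps>0$ and then pass $\eps\to0^+$ using the uniform-in-$(n,m)$ estimates of Lemmas~\ref{L:eps} and~\ref{L:epsnm}. A secondary technical point is that \cite[Theorem 2.12]{Lacker} is naturally stated for $\tilde v_{\eps,n}$ on $\Pc_2(\R^{dn})$; one has to verify that evaluating at the product measure $\mu\otimes\cdots\otimes\mu$ and using symmetry of the game (each player faces the same dynamics and the reward is the average of symmetric individual rewards) indeed recovers the mean field value $v_\eps$, which is where the identification $v_{\eps,n}(t,\mu)=\tilde v_{\eps,n}(t,\mu\otimes\cdots\otimes\mu)$ in \eqref{v_eps,n} is used together with the law invariance property (Theorem~\ref{T:LawInv}) applied in the present probabilistic setting.
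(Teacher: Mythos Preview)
Your proposal is correct and follows essentially the same two-step approach as the paper: first pass $m\to\infty$ via stability of the SDE flows under the uniform convergence of the mollified coefficients (the paper compresses this to ``straightforward arguments'' giving a.s.\ convergence of $\bar X^{i,m,\eps}$ to $\bar X^{i,\eps}$), then invoke \cite[Theorem 2.12]{Lacker} for $n\to\infty$. One remark: Lacker's result does not require non-degeneracy of the diffusion, so the case $\eps=0$ is covered directly and your fallback via Lemmas~\ref{L:eps} and~\ref{L:epsnm}, while valid, is unnecessary.
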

\begin{proof}[\textbf{Proof.}]
From the definitions of $b_{n,m}^i$, $f_{n,m}^i$, $g_{n,m}^i$ we get, through straightforward arguments, the convergence, a.s., when $m\to +\infty$, of
$\bar X_s^{i,m,\varepsilon,t,\bar \xi,\bar \alpha}$ to
$\bar X_s^{i,\varepsilon,t,\bar \xi,\bar \alpha}$.
This implies, using the definitions of $v_{\eps,n,m}$ and $v_{\eps,n}$,
\[
\lim_{m\rightarrow+\infty} v_{\eps,n,m}(t,\mu) \ = \ v_{\eps,n}(t,\mu).
\]
Then, the convergence
\[
\lim_{n\rightarrow+\infty} v_{\eps,n}(t,\mu) \ = \ v_{\eps}(t,\mu)
\]
is a consequence of \cite[Theorem 2.12]{Lacker}. \textcolor{black}{More precisely, for every $n\in\N$, $\eps>0$, denote
\[
J_{\eps,n}(t,\mu,\bar\alpha) \ = \ \frac{1}{n}\sum_{i=1}^n\bar\E\bigg[\int_t^T f\big(s,\bar X_s^{i,\eps,t,\bar\xi,\bar\alpha},\widehat\mu_s^{n,\eps,t,\bar\xi,\bar\alpha},\bar\alpha_s^i\big)\,ds + g\big(\bar X_T^{i,\eps,t,\bar\xi,\bar\alpha},\widehat\mu_T^{n,\eps,t,\bar\xi,\bar\alpha}\big)\bigg],
\]
for every $t\in[0,T]$, $\bar\alpha\in\bar\Ac^n$, $\mu\in\Pc_2(\R^d)$, with $\bar\xi\in L^2(\bar\Omega,\bar\Fc_t,\bar\P;\R^{dn})$ such that $\P_{\bar\xi}=\mu\otimes\cdots\otimes\mu$. Notice that
\[
v_{\eps,n}(t,\mu) \ = \ \sup_{\bar\alpha\in\bar\Ac^n}J_{\eps,n}(t,\mu,\bar\alpha), \qquad (t,\mu)\in[0,T]\times\Pc_2(\R^d).
\]
Now, by \cite[Theorem 2.12]{Lacker}, for every $n\in\N$ there exist $\epsilon_n\geq0$ and $\bar\alpha_n\in\bar\Ac^n$ such that $\lim_{n\rightarrow+\infty}\epsilon_n=0$ and $\bar\alpha_n$ is an $\epsilon_n$-optimal control for \eqref{tilde_v_eps,n}, namely it holds that
\begin{equation}\label{Lacker_Ineq}
J_{\eps,n}(t,\mu,\bar\alpha_n) \ \leq \ v_{\eps,n}(t,\mu) \ \leq \ J_{\eps,n}(t,\mu,\bar\alpha_n) + \epsilon_n, \qquad \forall\,n\in\N.
\end{equation}
In addition, by the beginning of Step 3 of \cite[Theorem 2.12]{Lacker} we have that $\{\bar\alpha_n\}_n$ is converging in a suitable way to some optimal relaxed control $m^*$, and also we have the convergence of the reward functionals: $J_{\eps,n}(t,\mu,\bar\alpha_n)\rightarrow v_\eps(t,\mu)$, where here we used that $v_\eps(t,\mu)$ coincides with the reward functional evaluated at the optimal relaxed control $m^*$, that is $v_\eps(t,\mu)$ is equal to the value obtained optimizing over relaxed controls, see \cite[Theorem 2.4]{Lacker}. Then, using the convergence $J_{\eps,n}(t,\mu,\bar\alpha_n)\rightarrow v_\eps(t,\mu)$, we see that the claim follows letting $n\rightarrow\infty$ in \eqref{Lacker_Ineq}.}
\end{proof}

\subsection{Smooth finite-dimensional approximations}
\label{SubS:A3}

We consider the same probabilistic setting as in Section \ref{SubS:CooperativeGame}.

\begin{Theorem}\label{T:SmoothApprox}
Suppose that Assumptions \ref{AssA}, \ref{AssB}, \ref{AssD} hold. Then, for every $\eps>0$, $n,m\in\N$, there exists $\bar v_{\eps,n,m}\colon[0,T]\times\R^{dn}\rightarrow\R$ such that
\begin{equation}\label{v_eps,n=bar_v_eps,n}
v_{\eps,n,m}(t,\mu) \ = \ \int_{\R^{dn}}\bar v_{\eps,n,m}(t,x_1,\ldots,x_n)\,\mu(dx_1)\cdots\mu(dx_n),
\end{equation}
for every $(t,\mu)\in[0,T]\times\Pc_2(\R^d)$, with $v_{\eps,n,m}$ given by \eqref{v_eps,n}, and the following holds.
\begin{enumerate}[\upshape 1)]
\item $\bar v_{\eps,n,m}\in C^{1,2}([0,T]\times\R^{dn})$ and $v_{\eps,n,m}\in C^{1,2}([0,T]\times\Pc_2(\R^d))$.
\item \textcolor{black}{For all $(t,\bar x)\in[0,T]\times\R^{dn}$, with $\bar x=(\bar x_1,\ldots,\bar x_\ell,\ldots,\bar x_{dn})=(x_1,\ldots,x_n)$ and $\bar x_\ell\in\R$, $x_i\in\R^d$, it holds that
\begin{align}
\big|\partial_{x_i}\bar v_{\eps,n,m}(t,\bar x)\big| \ &\leq \ \frac{C_K}{n},\label{Estimate_1stDeriv_n} \\
- C_{n,m} \ \leq \ \partial_{\bar x_\ell\bar x_h}\bar v_{\eps,n,m}(t,\bar x) \ &\leq \ \frac{1}{\eps^2}C_{n,m}, \label{Estimate_2ndDeriv}
\end{align}
for every $i=1,\ldots,n$, $\ell,h=1,\ldots,dn$, for some constants $C_K\geq0$ and $C_{n,m}\geq0$, with $C_K$ $($resp. $C_{n,m}$$)$ possibly depending on $K$ $($resp. $K,n,m$$)$, but independent of $\eps,n,m$ $($resp. $\eps$$)$, where $K$ is as in Assumption \ref{AssA}.}
\item $v_{\eps,n,m}$ solves the following equation:
\begin{equation}\label{HJB_eps,n}
\hspace{-5mm}\begin{cases}
\vspace{2mm}
\displaystyle\partial_t v_{\eps,n,m}(t,\mu) + \int_{\R^{dn}}\sum_{i=1}^n\sup_{a_i\in A}\bigg\{\langle\textcolor{black}{b_{n,m}^i(t,x_1,\ldots,x_n,a_i)},\partial_{x_i}\bar v_{\eps,n,m}(t,\bar x)\rangle \\
\displaystyle\vspace{2mm}+\,\frac{1}{2}\textup{tr}\Big[\big(\textcolor{black}{(\sigma\sigma\trans)(t,x_i,a_i)} + \eps^2\big)\partial_{x_ix_i}^2\bar v_{\eps,n,m}(t,\bar x)\Big] &\hspace{-2.5cm}(t,\mu)\in[0,T)\times \Pc_2(\R^d), \\
\displaystyle\vspace{2mm}+\,\frac{1}{n} f_{n,m}^i(t,x_1,\ldots,x_n,a_i)\bigg\}\mu(dx_1)\otimes\cdots\otimes\mu(dx_n) = 0, \\
\displaystyle v_{\eps,n,m}(T,\mu) = \frac{1}{n}\sum_{i=1}^n \int_{\R^{dn}} g_{n,m}^i(x_1,\ldots,x_n)\,\mu(dx_1)\otimes\cdots\otimes\mu(dx_n), &\hspace{-5mm}\,\mu\in\Pc_2(\R^d),
\end{cases}
\end{equation}
for every $n,m\in\N$, $\bar x=(x_1,\ldots,x_n)\in\R^{dn}$ and $x_1,\ldots,x_n\in\R^d$.
\end{enumerate}
\end{Theorem}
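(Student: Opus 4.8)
The plan is to realise $\bar v_{\eps,n,m}$ as the value function of a \emph{standard} finite-dimensional stochastic control problem, get its regularity from classical parabolic theory, and push both the regularity and the equation up to the Wasserstein level through the representation \eqref{v_eps,n=bar_v_eps,n}. Concretely, on the setting of Section~\ref{SubS:CooperativeGame}, for $(t,\bar x)\in[0,T]\times\R^{dn}$ and $\bar\alpha\in\bar\Ac^n$ let $\bar X^{t,\bar x,\bar\alpha}$ solve \eqref{SDE_bar_X} with $b$ replaced by $b_{n,m}^i$ and started from the deterministic datum $\bar x$, and set
\[
\bar v_{\eps,n,m}(t,\bar x) \ := \ \sup_{\bar\alpha\in\bar\Ac^n}\frac1n\sum_{i=1}^n\E\Big[\int_t^T f_{n,m}^i\big(s,\bar X_s^{t,\bar x,\bar\alpha},\bar\alpha_s^i\big)\,ds+g_{n,m}^i\big(\bar X_T^{t,\bar x,\bar\alpha}\big)\Big].
\]
Since the empirical measure $\tfrac1n\sum_j\delta_{x_j-y_j}$ entering $b_{n,m}^i,f_{n,m}^i,g_{n,m}^i$ is a deterministic function of $\bar x$, this is an ordinary controlled diffusion in $\R^{dn}$; its diffusion matrix is block diagonal, the $i$-th block being $(\sigma\sigma\trans)(t,x_i,a_i)+\eps^2 I_d\ge\eps^2 I_d$, so the associated Bellman equation is \emph{uniformly parabolic with ellipticity constant $\eps^2$}. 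Identity \eqref{v_eps,n=bar_v_eps,n} is then the classical fact that the value of a standard control problem is affine in the initial law: taking $\bar\xi$ with $\P_{\bar\xi}=\mu\otimes\cdots\otimes\mu$ and conditioning on $\bar\xi$ — using the ``rich'' $\sigma$-algebra $\bar\Gc$ to paste measurable families of near-optimal controls, exactly as in the law-invariance and dynamic-programming arguments recalled in Section~\ref{S:MKV_Control_Pb} — yields $v_{\eps,n,m}(t,\mu)=\tilde v_{\eps,n,m}(t,\mu^{\otimes n})=\int_{\R^{dn}}\bar v_{\eps,n,m}(t,\bar x)\,\mu(dx_1)\cdots\mu(dx_n)$.

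\emph{Regularity, the equation, and the second-order bounds.} After the change of variables $z_j=x_j-y_j$, $b_{n,m}^i,f_{n,m}^i,g_{n,m}^i$ are mollifications of $b,f,g$, hence $C^\infty$ in $\bar x$ with all spatial derivatives bounded (by constants depending on $n,m,K$ and the mollifiers) and, because of the convolution in time, Lipschitz in $t$; together with Assumptions~\ref{AssB} and \ref{AssD} this gives the regularity of the coefficients needed to apply classical Schauder/Krylov theory for uniformly parabolic Bellman equations (\cite[Section~7 of Chapter~4, Theorem~4.7.4]{Krylov80}), which yields $\bar v_{\eps,n,m}\in C^{1,2}([0,T]\times\R^{dn})$ and the fact that it is a classical solution of
\begin{align*}
&\partial_t\bar v_{\eps,n,m}(t,\bar x)+\sum_{i=1}^n\sup_{a_i\in A}\Big\{\big\langle b_{n,m}^i(t,\bar x,a_i),\partial_{x_i}\bar v_{\eps,n,m}(t,\bar x)\big\rangle\\
&\qquad+\tfrac12\textup{tr}\big[\big((\sigma\sigma\trans)(t,x_i,a_i)+\eps^2\big)\partial_{x_ix_i}^2\bar v_{\eps,n,m}(t,\bar x)\big]+\tfrac1n f_{n,m}^i(t,\bar x,a_i)\Big\}=0,
\end{align*}
with $\bar v_{\eps,n,m}(T,\bar x)=\tfrac1n\sum_i g_{n,m}^i(\bar x)$. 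In \eqref{Estimate_2ndDeriv} the upper bound $C_{n,m}/\eps^2$ is the usual $\|D^2\bar v_{\eps,n,m}\|_\infty$-estimate from the same theory, in which the ellipticity constant $\eps^2$ enters; the lower bound $-C_{n,m}$, which must \emph{not} degenerate as $\eps\to0^+$, I would obtain by a semiconvexity argument: each fixed-control payoff solves a \emph{linear} parabolic equation whose data are $C^2_b$ and whose coefficients have bounded first and second $\bar x$-derivatives — the added $\eps^2 I_d$ contributing no derivatives — so a maximum-principle bound for the twice-differentiated equation gives $D^2\ge -C_{n,m}I$ with $C_{n,m}$ independent of $\eps$ and of the control, and this lower bound passes to the supremum $\bar v_{\eps,n,m}$; evaluating the quadratic form on $e_\ell\pm e_h$ turns it into the componentwise bound $\partial_{\bar x_\ell\bar x_h}\bar v_{\eps,n,m}\ge -C_{n,m}$. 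Boundedness of $\partial_t\bar v_{\eps,n,m}$ then follows from the equation, using the boundedness of $b_{n,m}^i,f_{n,m}^i$ and of the first and second spatial derivatives of $\bar v_{\eps,n,m}$.

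\emph{The gradient estimate — the main obstacle.} The bound $|\partial_{x_i}\bar v_{\eps,n,m}|\le C_K/n$ with $C_K$ \emph{independent of $n$ and $m$} is the delicate point, and here the $\tfrac1n$-weighted Lipschitz structure \eqref{eq:Lipbfg} of the smoothed coefficients is essential. Perturb only the $i$-th block of the initial datum, from $x_i$ to $x_i'$, and let $\bar X,\bar Y$ be the two particle systems (driven by the same control). A Gronwall argument run on the \emph{sum over particles} $E_s:=\sum_j\E\sup_{r\le s}|\bar X_r^j-\bar Y_r^j|$ — rather than particle-by-particle followed by a Cauchy--Schwarz summation, which would cost a factor $\sqrt n$ — shows that the mean-field feedback $\tfrac1n\sum_j|\bar X^j-\bar Y^j|$ stays of order $\tfrac1n|x_i-x_i'|$, so $E_s\le C_K|x_i-x_i'|$ uniformly in $s$ and in the control, with $C_K$ depending only on $K$ and $T$. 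Inserting this into
\[
|\bar v_{\eps,n,m}(t,\bar x)-\bar v_{\eps,n,m}(t,\bar x')| \ \le \ \frac{2K}{n}\sup_{\bar\alpha}\E\Big[\int_t^T\Big(\sum_j|\bar X_s^j-\bar Y_s^j|\Big)ds+\sum_j|\bar X_T^j-\bar Y_T^j|\Big]
\]
— which itself is a consequence of the $\tfrac1n$-weighted Lipschitz bound on $f_{n,m}^i,g_{n,m}^i$ in \eqref{eq:Lipbfg} — gives $|\bar v_{\eps,n,m}(t,\bar x)-\bar v_{\eps,n,m}(t,\bar x')|\le (C_K/n)|x_i-x_i'|$, i.e.\ \eqref{Estimate_1stDeriv_n}. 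Running the same $\ell^1$-Gronwall with a full perturbation shows, in passing, that $\bar v_{\eps,n,m}(t,\cdot)$ is globally Lipschitz, which was used above.

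\emph{Transport to $v_{\eps,n,m}$.} From $v_{\eps,n,m}(t,\mu)=\int_{\R^{dn}}\bar v_{\eps,n,m}(t,\bar x)\,\mu^{\otimes n}(d\bar x)$ with $\bar v_{\eps,n,m}\in C^{1,2}$ having bounded spatial derivatives, one differentiates under the integral: the lifting of $v_{\eps,n,m}$ is Fréchet differentiable and, by the standard formulas for the $L$-derivatives of a linear functional of $\mu^{\otimes n}$,
\[
\partial_\mu v_{\eps,n,m}(t,\mu)(x)=\sum_{i=1}^n\int_{\R^{d(n-1)}}\partial_{x_i}\bar v_{\eps,n,m}\big(t,x_1,\dots,x_{i-1},x,x_{i+1},\dots,x_n\big)\,\mu(dx_1)\cdots\mu(dx_{i-1})\mu(dx_{i+1})\cdots\mu(dx_n),
\]
and similarly for $\partial_x\partial_\mu v_{\eps,n,m}$ with $\partial_{x_ix_i}^2\bar v_{\eps,n,m}$ in place of $\partial_{x_i}\bar v_{\eps,n,m}$; by the previous two steps these maps are jointly continuous and bounded ($|\partial_\mu v_{\eps,n,m}|\le C_K$ and $|\partial_x\partial_\mu v_{\eps,n,m}|\le n\,d\,C_{n,m}/\eps^2$), so $v_{\eps,n,m}\in C^{1,2}([0,T]\times\Pc_2(\R^d))$, in fact with bounded derivatives. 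Finally, integrating the finite-dimensional Bellman equation against $\mu(dx_1)\cdots\mu(dx_n)$, pulling $\partial_t$ out of the integral and substituting the two derivative formulas, produces exactly \eqref{HJB_eps,n}, the terminal condition following in the same way from $\bar v_{\eps,n,m}(T,\cdot)$. The only genuinely non-routine ingredient is the gradient estimate: extracting the $1/n$-rate with a constant free of $n$ and $m$ forces one to run the stability estimate for the interacting particle system in the $\ell^1$-norm over particles and to exploit the $\tfrac1n$-weighting in \eqref{eq:Lipbfg}, together with the careful tracking of the $\eps$-dependence (semiconvexity) that keeps the lower bound in \eqref{Estimate_2ndDeriv} non-degenerate.
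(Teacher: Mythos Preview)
Your proposal is correct and follows essentially the same route as the paper: define $\bar v_{\eps,n,m}$ as the value function of the cooperative $n$-player game with deterministic initial datum, obtain $C^{1,2}$-regularity and the second-order bounds from classical uniformly parabolic theory, prove the representation \eqref{v_eps,n=bar_v_eps,n} by a conditioning/pasting argument with discrete $\bar\xi$, and then differentiate under the integral to get the $L$-derivative formulas and the equation \eqref{HJB_eps,n}. Two minor differences worth noting: for \eqref{Estimate_2ndDeriv} the paper simply cites \cite[Theorem~4.7.4]{Krylov80} for \emph{both} the upper and the $\eps$-independent lower bound (Krylov's one-sided estimates already give semiconvexity with constant independent of the ellipticity), so your separate semiconvexity argument, while correct, is not needed; and for \eqref{Estimate_1stDeriv_n} the paper merely says the bound ``easily follows'' from the $\tfrac1n$-weighted Lipschitz estimates of Lemma~\ref{lm:conv}, whereas your $\ell^1$-over-particles Gronwall makes explicit the mechanism the paper leaves implicit.
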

\begin{proof}[\textbf{Proof.}]
We split the proof into four steps.

\vspace{1mm}

\noindent\emph{Step I. Definition of $\bar v_{\eps,n,m}$ and its properties.} Fix $\eps>0$ and $n,m\in\N$. For every $t\in[0,T]$, $\bar x=(x_1,\ldots,x_n)\in\R^{dn}$, let $\bar v_{\eps,n,m}\colon[0,T]\times\R^{dn}\rightarrow\R$ be given by
\begin{equation}\label{bar_v_eps,n}
\bar v_{\eps,n,m}(t,x_1,\ldots,x_n) \ = \ \tilde v_{\eps,n,m}(t,\delta_{x_1}\otimes\cdots\otimes\delta_{x_n}),
\end{equation}
with $\tilde v_{\eps,n,m}$ defined by \eqref{tilde_v_eps,n}. In other words, $\bar v_{\eps,n,m}$ corresponds to the value function of the cooperative $n$-player game (see Section \ref{SubS:CooperativeGame}) with deterministic initial state $\bar x$ in place of the random vector $\bar\xi$. Hence
\begin{align*}
\bar v_{\eps,n,m}(t,x_1,\ldots,x_n) \ &= \ \sup_{\bar\alpha\in\bar\Ac^n}\frac{1}{n}\sum_{i=1}^n\bar\E\bigg[\int_t^T f_{n,m}^i\big(s,\textcolor{black}{\bar X_s^{1,\textcolor{black}{m},\eps,t,\bar x,\bar\alpha},\ldots,\bar X_s^{n,\textcolor{black}{m},\eps,t,\bar x,\bar\alpha}},\bar\alpha_s^i\big)\,ds \\
&\quad \ + g_{n,m}^i\big(\textcolor{black}{\bar X_T^{1,\textcolor{black}{m},\eps,t,\bar x,\bar\alpha},\ldots,\bar X_T^{n,\textcolor{black}{m},\eps,t,\bar x,\bar\alpha}}\big)\bigg].
\end{align*}
This optimal control problem involve coefficients satisfying Assumption \ref{AssA}. Therefore $\bar v_{\eps,n,m}$ is bounded, jointly continuous, and Lipschitz with respect to $\bar x$. Moreover, $\bar v_{\eps,n,m}$ is a viscosity solution of the following Bellman equation:
\begin{equation}\label{HJB_n,eps}
\begin{cases}
\displaystyle\partial_t \bar v_{\eps,n,m}(t,\bar x) + \sup_{(a_1,\ldots,a_n)\in A^n}\bigg\{\frac{1}{n}\sum_{i=1}^n f_{n,m}^i(t,\bar x,a_i) \\
\displaystyle+ \sum_{i=1}^n\frac{1}{2}\textup{tr}\Big[\big(\textcolor{black}{(\sigma\sigma\trans)(t,x_i,a_i)} + \eps^2\big)\partial_{x_ix_i}^2\bar v_{\eps,n,m}(t,\bar x)\Big] \\
\displaystyle+ \sum_{i=1}^n\langle \textcolor{black}{b_{n,m}^i(t,\bar x,a_i)},\partial_{x_i}\bar v_{\eps,n,m}(t,\bar x)\rangle\bigg\} = 0, \hspace{2.25cm} \forall\,(t,\bar x)\in[0,T)\times\R^{dn}, \\
\displaystyle\bar v_{\eps,n}(t,\bar x) = \frac{1}{n}\sum_{i=1}^n g_{n,m}^i(\bar x), \hspace{5.4cm} \forall\,\bar x\in\R^{dn}.
\end{cases}
\end{equation}
We notice that equation \eqref{HJB_n,eps} is uniformly parabolic, with coefficients satisfying Assumptions \ref{AssA} and \ref{AssB}. It follows that $\bar v_{\eps,n,m}\in C^{1,2}([0,T]\times\R^{dn})$ (see \cite[Theorem 14.15]{Lieberman} and the comments just below Theorem 14.15 regarding the case with linear operators ``$L_\nu$''). \textcolor{black}{In addition, from \cite[Theorem 4.7.4]{Krylov80} we deduce estimate \eqref{Estimate_2ndDeriv}. Concerning \eqref{Estimate_1stDeriv_n}, we just notice that it follows if we prove that
\[
\big|\bar v_{\eps,n,m}(t,\bar x) - \bar v_{\eps,n,m}(t,\bar z)\big| \ \leq \ \frac{C_K}{n} |\bar x - \bar z|,
\]
whenever the components of $\bar x=(x_1,\ldots,x_n)$ and $\bar z=(z_1,z_2,\ldots,z_n)$ are equal, apart for one component $x_k\neq z_k$.
Such a Lipschitz continuity} easily follows from the Lipschitz continuity estimates
for $b_{n,m}^i$, $f_{n,m}^i$ and $g_{n,m}^i$ proved in Lemma \ref{lm:conv}, formula \eqref{eq:Lipbfg}. 
\\
Finally, we observe that equation \eqref{HJB_n,eps} can be equivalently written as
\begin{align}\label{HJB_n,eps_2}
\partial_t \bar v_{\eps,n,m}(t,\bar x) + \sum_{i=1}^n\sup_{a_i\in A}\bigg\{\frac{1}{n}f_{n,m}^i(t,\bar x,a_i) + \langle \textcolor{black}{b_{n,m}^i(t,\bar x,a_i)},\partial_{x_i}\bar v_{\eps,n,m}(t,\bar x)\rangle& \\
+ \, \frac{1}{2}\textup{tr}\Big[\big(\textcolor{black}{(\sigma\sigma\trans)(t,x_i,a_i)} + \eps^2\big)\partial_{x_ix_i}^2\bar v_{\eps,n,m}(t,\bar x)\Big]&\bigg\} \ = \ 0, \notag
\end{align}
for all $(t,\bar x)\in[0,T)\times\R^{dn}$.

\vspace{1mm}

\noindent\emph{Step II. Proof of equality \eqref{v_eps,n=bar_v_eps,n}.} We prove the more general equality
\begin{equation}\label{tilde_v_eps,n=bar_v_eps,n}
\tilde v_{\eps,n,m}(t,\bar\mu) \ = \ \int_{\R^{dn}}\bar v_{\eps,n,m}(t,x_1,\ldots,x_n)\,\bar\mu(dx_1,\ldots,dx_n),
\end{equation}
for every $(t,\bar\mu)\in[0,T]\times\Pc_2(\R^{dn})$, from which \eqref{v_eps,n=bar_v_eps,n} follows. Notice that equality \eqref{tilde_v_eps,n=bar_v_eps,n} can be equivalently written as
\[
\tilde v_{\eps,n,m}(t,\bar\mu) \ = \ \E[\bar v_{\eps,n,m}(t,\bar\xi)],
\]
for every $t\in[0,T]$, $\bar\xi\in L^2(\bar\Omega,\bar\Fc_t,\bar\P;\R^{dn})$, with $\P_{\bar\xi}=\bar\mu$. We split the rest of the proof of Step II into two substeps.

\vspace{1mm}

\noindent\emph{Step II-a. General case: $\bar\xi\in L^2(\bar\Omega,\bar\Fc_t,\bar\P;\R^{dn})$.} Observe that we can apply Proposition \ref{P:ValueFunction} to the cooperative $n$-player game, from which we deduce that $\tilde v_{\eps,n,m}$ is bounded, jointly continuous, and Lipschitz with respect to $\bar\mu$. Moreover, recall from Step I above that $\bar v_{\eps,n,m}$ is also bounded, jointly continuous, and Lipschitz with respect to $\bar x$. As a consequence, the general case with $\bar\xi\in L^2(\bar\Omega,\bar\Fc_t,\bar\P;\R^{dn})$ can be deduced, relying on an approximation argument, from the case where $\bar\xi$ takes only a finite number of values, namely from the next Step II-b.

\vspace{1mm}

\noindent\emph{Step II-b. $\bar\xi\in L^2(\bar\Omega,\bar\Fc_t,\bar\P;\R^{dn})$ taking only a finite number of values.} Firstly, we fix some notation. For every $t\in[0,T]$, let $\bar\F^{B,W,t}=(\bar{\Fc}_s^{B,W,t})_{s\geq0}$ be the $\bar\P$-completion of the filtration generated by $(\bar B_{s\vee t}-\bar B_t)_{s\geq0}$ and $(\bar W_{s\vee t}-\bar W_t)_{s\geq0}$, where we recall that $\bar B=(\bar B^1,\ldots,\bar B^n)$ and $\bar W=(\bar W^1,\ldots,\bar W^n)$. Let also $Prog(\bar\F^{B,W,t})$ denote the $\sigma$-algebra of $[0,T]\times\bar\Omega$ of all $\bar\F^{B,W,t}$-progressive sets.

\vspace{1mm}

\noindent\emph{Proof of the inequality $\tilde v_{\eps,n,m}(t,\bar\mu)\leq\E[\bar v_{\eps,n,m}(t,\bar\xi)]$.} Suppose that $\bar\xi\in L^2(\bar\Omega,\bar\Fc_t,\bar\P;\R^{dn})$ takes only a finite number of values. In such a case, by \cite[Lemma B.3]{CKGPR20} there exists a $\bar\Fc_t$-measurable random variable $\bar U\colon\bar\Omega\rightarrow\R$, having uniform distribution on $[0,1]$ and being independent of $\bar\xi$. Then, by \cite[Lemma B.2]{CKGPR20}, for every $\bar\alpha\in\bar\Ac^n$ there exists a measurable function
\[
\mathrm a\colon\big([0,T]\times\bar\Omega\times\R^{dn}\times[0,1],Prog(\bar\F^{B,W,t})\otimes\Bc(\R^{dn})\otimes\mathcal B([0,1])\big) \longrightarrow (A^n,\Bc(A^n))
\]
such that $\bar\beta:=(\mathrm a_s(\bar\xi,\bar U))_{s\in[0,T]}\in\bar\Ac^n$ and
\begin{align*}
&\Big(\bar\xi,(\mathrm a_s(\bar\xi,\bar U))_{s\in[t,T]},(\bar B_s-\bar B_t)_{s\in[t,T]},(\bar W_s-\bar W_t)_{s\in[t,T]}\Big) \\
&\hspace{3cm}\overset{\mathscr L}{=} \ \Big(\bar\xi,(\bar\alpha_s)_{s\in[t,T]},(\bar B_s-\bar B_t)_{s\in[t,T]},(\bar W_s-\bar W_t)_{s\in[t,T]}\Big),
\end{align*}
where $\overset{\mathscr L}{=}$ stands for equality in law. As a consequence, proceeding along the same lines as in \cite[Proposition 1.137]{FabbriGozziSwiech}, we deduce that
\[
\big(\bar X_s^{\textcolor{black}{m},\eps,t,\bar\xi,\bar\alpha},\bar\alpha_s\big)_{s\in[t,T]} \ \overset{\mathscr L}{=} \ \big(\bar X_s^{\textcolor{black}{m},\eps,t,\bar\xi,\bar\beta},\bar\beta_s\big)_{s\in[t,T]}.
\]
Moreover, since $\bar\xi$ takes only a finite number of values, it holds that
\begin{equation}\label{xi_discrete}
\bar\xi \ = \ \sum_{k=1}^K \bar x_k\,1_{\bar E_k},
\end{equation}
for some $K\in\N$, $\bar x_k\in\R^{dn}$, $\bar E_k\in\sigma(\bar\xi)$, with $\{\bar E_k\}_{k=1,\ldots,K}$ being a partition of $\bar\Omega$. Let also
\[
\bar\beta_{k,s} \ := \ \mathrm a_s(\bar x_k,\bar U), \qquad \forall\,s\in[0,T],\,k=1,\ldots,K.
\]
It easy to see that $\bar X^{\textcolor{black}{m},\eps,t,\bar\xi,\bar\beta}$ and $\bar X^{\textcolor{black}{m},\eps,t,\bar x_1,\bar\beta_1}\,1_{\bar E_1}+\cdots+\bar X^{\textcolor{black}{m},\eps,t,\bar x_K,\bar\beta_K}\,1_{\bar E_K}$ satisfy the same system of controlled stochastic differential equations, therefore, by pathwise uniqueness, they are $\bar\P$-indistinguishable. Hence
\begin{align*}
&\frac{1}{n}\sum_{i=1}^n\bar\E\bigg[\int_t^T f_{n,m}^i\big(s,\bar X_s^{1,\textcolor{black}{m},\eps,t,\bar\xi,\bar\alpha},\ldots,\bar X_s^{n,\textcolor{black}{m},\eps,t,\bar\xi,\bar\alpha},\bar\alpha_s^i\big)\,ds + g_{n,m}^i\big(\bar X_T^{1,\textcolor{black}{m},\eps,t,\bar\xi,\bar\alpha},\ldots,\bar X_T^{n,\textcolor{black}{m},\eps,t,\bar\xi,\bar\alpha}\big)\bigg] \\
&= \ \frac{1}{n}\sum_{i=1}^n\bar\E\bigg[\int_t^T f_{n,m}^i\big(s,\bar X_s^{1,\textcolor{black}{m},\textcolor{black}{m},\eps,t,\bar\xi,\bar\beta},\ldots,\bar X_s^{n,\textcolor{black}{m},\eps,t,\bar\xi,\bar\beta},\bar\beta_s^i\big)\,ds + g_{n,m}^i\big(\bar X_T^{1,\textcolor{black}{m},\eps,t,\bar\xi,\bar\beta},\ldots,\bar X_T^{n,\textcolor{black}{m},\eps,t,\bar\xi,\bar\beta}\big)\bigg] \\
&= \ \frac{1}{n}\sum_{i=1}^n\bar\E\bigg[\sum_{k=1}^K\bigg(\int_t^T f_{n,m}^i\big(s,\bar X_s^{1,\textcolor{black}{m},\eps,t,\bar x_k,\bar\beta_k},\ldots,\bar X_s^{n,\textcolor{black}{m},\eps,t,\bar x_k,\bar\beta_k},\bar\beta_{k,s}^i\big)ds \\
&\quad \ + g_{n,m}^i\big(\bar X_T^{1,\textcolor{black}{m},\eps,t,\bar x_k,\bar\beta_k},\ldots,\bar X_T^{n,\textcolor{black}{m},\eps,t,\bar x_k,\bar\beta_k}\big)\bigg)1_{\bar E_k}\bigg].
\end{align*}
Since both $\{\bar X^{\textcolor{black}{m},\eps,\bar x_k,\bar\beta_k}\}_{k}$ and $\{\bar\beta_k\}_k$ are independent of $\{\bar E_k\}_k$, we have
\begin{align*}
&\frac{1}{n}\sum_{i=1}^n\bar\E\bigg[\sum_{k=1}^K\bigg(\int_t^T f_{n,m}^i\big(s,\bar X_s^{1,\textcolor{black}{m},\eps,t,\bar x_k,\bar\beta_k},\ldots,\bar X_s^{n,\textcolor{black}{m},\eps,t,\bar x_k,\bar\beta_k},\bar\beta_{k,s}^i\big)\,ds \\
&+ g_{n,m}^i\big(\bar X_T^{1,\textcolor{black}{m},\eps,t,\bar x_k,\bar\beta_k},\ldots,\bar X_T^{n,\textcolor{black}{m},\eps,t,\bar x_k,\bar\beta_k}\big)1_{\bar E_k}\bigg] \\
&= \frac{1}{n}\sum_{i=1}^n\bar\E\bigg[\sum_{k=1}^K\bar\E\bigg[\int_t^T f_{n,m}^i\big(s,\bar X_s^{1,\textcolor{black}{m},\eps,t,\bar x_k,\bar\beta_k},\ldots,\bar X_s^{n,\textcolor{black}{m},\eps,t,\bar x_k,\bar\beta_k},\bar\beta_{k,s}^i\big)ds \\
&+ g_{n,m}^i\big(\bar X_T^{1,\textcolor{black}{m},\eps,t,\bar x_k,\bar\beta_k},\ldots,\bar X_T^{n,\textcolor{black}{m},\eps,t,\bar x_k,\bar\beta_k}\big)\bigg]1_{\bar E_k}\bigg] \\
&= \sum_{k=1}^K\bar\E\bigg[\frac{1}{n}\sum_{i=1}^n\bar\E\bigg[\int_t^T f_{n,m}^i\big(s,\bar X_s^{1,\textcolor{black}{m},\eps,t,\bar x_k,\bar\beta_k},\ldots,\bar X_s^{n,\textcolor{black}{m},\eps,t,\bar x_k,\bar\beta_k},\bar\beta_{k,s}^i\big)ds \\
&+ g_{n,m}^i\big(\bar X_T^{1,\textcolor{black}{m},\eps,t,\bar x_k,\bar\beta_k},\ldots,\bar X_T^{n,\textcolor{black}{m},\eps,t,\bar x_k,\bar\beta_k}\big)\bigg]1_{\bar E_k}\bigg] \ \leq \ \sum_{k=1}^K\bar\E\Big[\bar v_{\eps,n,m}(t,\bar x_k)\,1_{\bar E_k}\Big] \ = \ \bar\E\big[\bar v_{\eps,n,m}(t,\bar\xi)\big].
\end{align*}
As $\bar\alpha$ was arbitrary, we obtain (denoting by $\bar\mu$ the law of $\bar\xi$)
\begin{align*}
\tilde v_{\eps,n,m}(t,\bar\mu) \ &= \ \sup_{\bar\alpha\in\bar\Ac^n}\frac{1}{n}\sum_{i=1}^n\bar\E\bigg[\int_t^T f_{n,m}^i\big(s,\bar X_s^{1,\textcolor{black}{m},\eps,t,\bar\xi,\bar\alpha},\ldots,\bar X_s^{n,\textcolor{black}{m},\eps,t,\bar\xi,\bar\alpha},\bar\alpha_s^i\big)\,ds \\
&\quad \ + g_{n,m}^i\big(\bar X_T^{1,\textcolor{black}{m},\eps,t,\bar\xi,\bar\alpha},\ldots,\bar X_T^{n,\textcolor{black}{m},\eps,t,\bar\xi,\bar\alpha}\big)\bigg] \ \leq \ \bar\E\big[\bar v_{\eps,n,m}(t,\bar\xi)\big].
\end{align*}

\vspace{1mm}

\noindent\emph{Proof of the inequality $\E[\bar v_{\eps,n,m}(t,\bar\xi)]\leq\tilde v_{\eps,n,m}(t,\bar\mu)$.} Let $\bar\Ac_t^n$ be the subset of $\bar\Ac^n$ of all $\bar\F^{B,W,t}$-progressively measurable processes $\bar\alpha=(\bar\alpha^1,\ldots,\bar\alpha^n)\colon[0,T]\times\bar\Omega\rightarrow A^n$. Then, it is well-known that  the value function $\bar v_{\eps,n,m}$ in \eqref{bar_v_eps,n} is also given by
\begin{align}\label{bar_v_eps,n_2}
\bar v_{\eps,n,m}(t,x_1,\ldots,x_n) \ &= \ \sup_{\bar\alpha\in\bar\Ac_t^n}\frac{1}{n}\sum_{i=1}^n\bar\E\bigg[\int_t^T \! f_{n,m}^i\big(s,\bar X_s^{1,\textcolor{black}{m},\eps,t,\bar x,\bar\alpha},\ldots,\bar X_s^{n,\textcolor{black}{m},\eps,t,\bar x,\bar\alpha},\bar\alpha_s^i\big)ds \\
&\quad \ + g_{n,m}^i\big(\bar X_T^{1,\textcolor{black}{m},\eps,t,\bar x,\bar\alpha},\ldots,\bar X_T^{n,\textcolor{black}{m},\eps,t,\bar x,\bar\alpha}\big)\bigg], \notag
\end{align}
where the supremum is taken on $\bar\Ac_t^n$ rather than on $\bar\Ac^n$. Now, let $\bar\xi\in L^2(\bar\Omega,\bar\Fc_t,\bar\P;\R^{dn})$ be given by \eqref{xi_discrete}. By \eqref{bar_v_eps,n_2}, for every $\delta>0$ and $k=1,\ldots,K$, there exists $\bar\beta_k\in\bar\Ac_t^n$ (possibly depending on $\delta$) such that
\begin{align*}
\bar v_{\eps,n,m}(t,\bar x_k)\ &\leq \ \frac{1}{n}\sum_{i=1}^n\bar\E\bigg[\int_t^T f_{n,m}^i\big(s,\bar X_s^{1,\textcolor{black}{m},\eps,t,\bar x_k,\bar\beta_k},\ldots,\bar X_s^{n,\textcolor{black}{m},\eps,t,\bar x_k,\bar\beta_k},\bar\beta_{k,s}^i\big)\,ds \\
&\quad \ + \; g_{n,m}^i\big(\bar X_T^{1,\textcolor{black}{m},\eps,t,\bar x_k,\bar\beta_k},\ldots,\bar X_T^{n,\textcolor{black}{m},\eps,t,\bar x_k,\bar\beta_k}\big)\bigg] + \delta.
\end{align*}
Then, define
\[
\bar\beta \ := \ \sum_{k=1}^K \bar\beta_k\,1_{\bar E_k}.
\]
Notice that $\bar\beta\in\bar\Ac^n$. Moreover, it is easy to see that $\bar X^{\textcolor{black}{m},\eps,t,\bar\xi,\bar\beta}$ and $\bar X^{\textcolor{black}{m},\eps,t,\bar x_1,\bar\beta_1}\,1_{\bar E_1}+\cdots+\bar X^{\textcolor{black}{m},\eps,t,\bar x_K,\bar\beta_K}\,1_{\bar E_K}$ satisfy the same system of controlled stochastic differential equations, therefore, by pathwise uniqueness, they are $\bar\P$-indistinguishable. Hence (using the independence of both $\{\bar X^{\textcolor{black}{m},\eps,\bar x_k,\bar\beta_k}\}_{k}$ and $\{\bar\beta_k\}_k$ from $\{\bar E_k\}_k$)
\begin{align*}
&\bar\E\big[\bar v_{\eps,n,m}(t,\bar\xi)\big] \ = \ \sum_{k=1}^K \bar\E\Big[\bar v_{\eps,n,m}(t,\bar x_k)\,1_{\bar E_k}\Big] \\
&\leq \ \sum_{k=1}^K\bar\E\bigg[\frac{1}{n}\sum_{i=1}^n\bar\E\bigg[\int_t^T f_{n,m}^i\big(s,\bar X_s^{1,\textcolor{black}{m},\eps,t,\bar x_k,\bar\beta_k},\ldots,\bar X_s^{n,\textcolor{black}{m},\eps,t,\bar x_k,\bar\beta_k},\bar\beta_{k,s}^i\big)\,ds \\
&\quad \ + g_{n,m}^i\big(\bar X_T^{1,\textcolor{black}{m},\eps,t,\bar x_k,\bar\beta_k},\ldots,\bar X_T^{n,\textcolor{black}{m},\eps,t,\bar x_k,\bar\beta_k}\big)\bigg]1_{\bar E_k}\bigg] + \delta \\
&= \ \frac{1}{n}\sum_{i=1}^n\bar\E\bigg[\sum_{k=1}^K\bar\E\bigg[\int_t^T f_{n,m}^i\big(s,\bar X_s^{1,\textcolor{black}{m},\eps,t,\bar x_k,\bar\beta_k},\ldots,\bar X_s^{n,\textcolor{black}{m},\eps,t,\bar x_k,\bar\beta_k},\bar\beta_{k,s}^i\big)\,ds \\
&\quad \ + g_{n,m}^i\big(\bar X_T^{1,\textcolor{black}{m},\eps,t,\bar x_k,\bar\beta_k},\ldots,\bar X_T^{n,\textcolor{black}{m},\eps,t,\bar x_k,\bar\beta_k}\big)\bigg]1_{\bar E_k}\bigg] + \delta \\
&= \ \frac{1}{n}\sum_{i=1}^n\bar\E\bigg[\sum_{k=1}^K\bigg(\int_t^T f_{n,m}^i\big(s,\bar X_s^{1,\textcolor{black}{m},\eps,t,\bar x_k,\bar\beta_k},\ldots,\bar X_s^{n,\textcolor{black}{m},\eps,t,\bar x_k,\bar\beta_k},\bar\beta_{k,s}^i\big)\,ds \\
&\quad \ + g_{n,m}^i\big(\bar X_T^{1,\textcolor{black}{m},\eps,t,\bar x_k,\bar\beta_k},\ldots,\bar X_T^{n,\textcolor{black}{m},\eps,t,\bar x_k,\bar\beta_k}\big)\bigg)1_{\bar E_k}\bigg] + \delta \\
&= \ \frac{1}{n}\sum_{i=1}^n\bar\E\bigg[\int_t^T f_{n,m}^i\big(s,\bar X_s^{1,\textcolor{black}{m},\eps,t,\bar\xi,\bar\beta},\ldots,\bar X_s^{n,\textcolor{black}{m},\eps,t,\bar\xi,\bar\beta},\bar\beta_s^i\big)\,ds \\
&\quad \ + g_{n,m}^i\big(\bar X_T^{1,\textcolor{black}{m},\eps,t,\bar\xi,\bar\beta},\ldots,\bar X_T^{n,\textcolor{black}{m},\eps,t,\bar\xi,\bar\beta}\big)\bigg] + \delta \ \leq \ \tilde v_{\eps,n,m}(t,\bar\mu) + \delta,
\end{align*}
with $\bar\mu$ being the law of $\bar\xi$. From the arbitrariness of $\delta$, we conclude that the inequality $\E[\bar v_{\eps,n,m}(t,\bar\xi)]\leq\tilde v_{\eps,n,m}(t,\bar\mu)$ holds.

\vspace{1mm}

\noindent\emph{Step III. Proof of item 1).} We begin noting that, by equality \eqref{v_eps,n=bar_v_eps,n}, we have
\begin{equation}\label{TimeDeriv}
\partial_t v_{\eps,n,m}(t,\mu) \ = \ \int_{\R^{dn}} \partial_t \bar v_{\eps,n,m}(t,x_1,\ldots,x_n)\,\mu(dx_1)\cdots\mu(dx_n),
\end{equation}
which proves that $\partial_t v_{\eps,n,m}$ exists and is continuous. Now, for every $\eps>0$ and $n\geq2$, let $\hat v_{\eps,n,m}\colon[0,T]\times(\Pc_2(\R^d))^n\rightarrow\R$ be given by
\[
\hat v_{\eps,n,m}(t,\mu_1,\ldots,\mu_n) \ := \ \tilde v_{\eps,n,m}(t,\mu_1\otimes\cdots\otimes\mu_n) \ = \ \int_{\R^{dn}} \bar v_{\eps,n,m}(t,x_1,\ldots,x_n)\,\mu_1(dx_1)\cdots\mu_n(dx_n),
\]
for every $(t,\mu_1,\ldots,\mu_n)\in[0,T]\times(\Pc_2(\R^d))^n$. Then, by direct calculation, we obtain
\begin{align*}
&\partial_{\mu_i} \hat v_{\eps,n,m}(t,\mu_1,\ldots,\mu_n)(x) \\
&= \int_{\R^{d(n-1)}} \!\!\! \partial_{x_i} \bar v_{\eps,n,m}(t,x_1,\ldots,x_{i-1},x,x_{i+1}\ldots,x_n)\mu_1(dx_1)\cdots\mu_{i-1}(dx_{i-1})\mu_{i+1}(dx_{i+1})\cdots\mu_n(dx_n),
\end{align*}
for every $(t,\mu_1,\ldots,\mu_n,x)\in[0,T]\times(\Pc_2(\R^d))^n\times\R^d$, $i=1,\ldots,n$. Since $v_{\eps,n,m}(t,\mu)=\hat v_{\eps,n,m}(t,\mu,\ldots,\mu)$, we obtain
\begin{align}\label{partial_mu_identity}
&\partial_\mu v_{\eps,n,m}(t,\mu)(x) \\
&= \ \sum_{i=1}^n \int_{\R^{d(n-1)}} \partial_{x_i} \bar v_{\eps,n,m}(t,x_1,\ldots,x_{i-1},x,x_{i+1},\ldots,x_n)\,\mu(dx_1)\cdots\mu(dx_{i-1})\,\mu(dx_{i+1})\cdots\mu(dx_n), \notag	
\end{align}
for every $(t,\mu,x)\in[0,T]\times\Pc_2(\R^d)\times\R^d$. Hence
\begin{align}\label{partial_x_partial_mu_identity}
&\partial_x\partial_\mu v_{\eps,n,m}(t,\mu)(x) \\
&= \ \sum_{i=1}^n \int_{\R^{d(n-1)}} \partial_{x_ix_i}^2 \bar v_{\eps,n,m}(t,x_1,\ldots,x_{i-1},x,x_{i+1},\ldots,x_n)\,\mu(dx_1)\cdots\mu(dx_{i-1})\,\mu(dx_{i+1})\cdots\mu(dx_n), \notag
\end{align}
for every $(t,\mu,x)\in[0,T]\times\Pc_2(\R^d)\times\R^d$. In conclusion, we see that $v_{\eps,n,m}\in C^{1,2}([0,T]\times\Pc_2(\R^d))$.

\vspace{1mm}

\noindent\emph{Step IV. Proof of item 3).} Recall that $\bar v_{\eps,n,m}$ solves equation \eqref{HJB_n,eps}. Fix $(t,\mu)\in[0,T]\times\Pc_2(\R^d)$. When $t=T$, integrating the terminal condition of \eqref{HJB_n,eps} with respect to $\mu\otimes\cdots\otimes\mu$ on $\R^{dn}$, we get
\[
v_{\eps,n,m}(T,\mu) \ = \ \frac{1}{n}\sum_{i=1}^n \int_{\R^{dn}} g_{n,m}^i(x_1,\ldots,x_n)\,\mu(dx_1)\otimes\cdots\otimes\mu(dx_n),
\]
which corresponds to the terminal condition of equation \eqref{HJB_eps,n}. On the other hand, when $t<T$, integrating equation \eqref{HJB_n,eps_2} with respect to $\mu\otimes\cdots\otimes\mu$ on $\R^{dn}$, and using \eqref{TimeDeriv}, we find
\begin{align*}
\partial_t v_{\eps,n,m}(t,\mu) + \int_{\R^{dn}}\sum_{i=1}^n\sup_{a_i\in A}\bigg\{\frac{1}{n} f_{n,m}^i(t,\bar x,a_i) + \langle \textcolor{black}{b_{n,m}^i(t,\bar x,a_i)},\partial_{x_i}\bar v_{\eps,n,m}(t,\bar x)\rangle& \\
+ \, \frac{1}{2}\textup{tr}\Big[\big(\textcolor{black}{(\sigma\sigma\trans)(t,x_i,a_i)} + \eps^2\big)\partial_{x_ix_i}^2\bar v_{\eps,n,m}(t,\bar x)\Big]\bigg\}\mu(dx_1)\otimes\cdots\otimes\mu(dx_n)& \ = \ 0,
\end{align*}
which corresponds to equation \eqref{HJB_eps,n}.
\end{proof}

\noindent We end this section with the next result, which is used in the proof of the comparison theorem, in order to prove that $v_0\leq u_2$.
We first need to regularize the coefficients also in the control variable. For that, we fix $p\in\N$ such that $A\subset \R^p$ and a function $\zeta_p\colon\R^p\rightarrow[0,+\infty)$ being of class $C^\infty$ with compact support and satisfying $\int_{\R^p}\zeta_p(a)\,da=1$. Moreover, we extend the continuous and bounded functions $b$ and $f$ defined on $[0,T]\times\R^d\times\Pc_2(\R^d)\times A$ to some continuous and bounded functions, still denoted by $b$ and $f$, defined on $[0,T]\times\R^d\times\Pc_2(\R^d)\times\R^p$. Then, as in Section \ref{SubS:CooperativeGame} we define the coefficients $b_{n,m}^i$ and $f_{n,m}^i$ on the entire space $[0,T]\times\R^{dn}\times\R^p$ (rather than $[0,T]\times\R^{dn}\times A$). Afterwards, we define the coefficients $\tilde b ^i_{n,m}$ and $\tilde f ^i_{n,m}$ by
\begin{align*}
\tilde b_{n,m}^i(t,\bar x,a)  \ &= \ m^{p}
\int_{\R^{p}}b_{n,m}^i(t,\bar x,a-a')\zeta_p(ma')da', \\
\tilde f_{n,m}^i(t,\bar x,a)  \ &= \ m^{p}
\int_{\R^{p}}f_{n,m}^i(t,\bar x,a-a')\zeta_p(ma')da',
\end{align*}
for all $n,m\in\N$, $i=1,\ldots,n$, $\bar x=(x_1,\ldots,x_n)\in\R^{dn}$, $(t,a)\in[0,T]\times\R^p$. We can now state our last result. 

\begin{Theorem}\label{T:SmoothApprox2}
Let Assumptions \ref{AssA}, \ref{AssB}, \ref{AssC}, \ref{AssD} hold. For every $t\in[0,T]$, let $\Mc_t$ denote the set of $\Fc_t$-measurable random variables $\mathfrak a\colon\Omega\rightarrow A$. Let $u_2\colon[0,T]\times\Pc_2(\R^d)\rightarrow\R$ be a continuous and bounded function. For every $\underline t_0\in[0,T)$, $s_0\in(\underline t_0,T]$, let $v^{s_0}\colon[\underline t_0,s_0]\times\Pc_2(\R^d\times A)\rightarrow\R$ be given by
\[
v^{s_0}(t,\nu) \ = \E\bigg[\int_t^{s_0} f\big(r,X_r^{t,\xi,\mathfrak a_0},\P_{X_r^{t,\xi,\mathfrak a_0}},Y_r^{t,\mathfrak a_0}\big)\,dr\bigg] + u_2\big(s_0,\P_{X_{s_0}^{t,\xi,\mathfrak a_0}}\big),
\]
for all $(t,\nu)\in[\underline t_0,s_0]\times\Pc_2(\R^d\times A)$, $\xi\in L^2(\Omega,\Fc_t,\P;\R^d)$ and $\mathfrak a_0\in\Mc_t$ such that $\P_{(\xi,\mathfrak a_0)}=\nu$, \textcolor{black}{where $(X^{t,\xi,\mathfrak a_0},Y^{t,\mathfrak a_0})$ is the unique solution to the following system of McKean-Vlasov stochastic differential equations:
\begin{equation}\label{MKVSDE_X,Y}
\begin{cases}
X_s \ = \ \xi + \int_t^s b\big(r,X_r,\P_{X_r},Y_r\big)\,dr
+ \int_t^s \sigma(r,X_r,Y_r)\,dB_r, \qquad s\in[t,T], \\
Y_s = \mathfrak a_0, \qquad s\in[t,T].
\end{cases}
\end{equation}}
Moreover, for every $n,m\in\N$, let $v_{n,m}^{s_0}\colon[\underline t_0,s_0]\times\Pc_2(\R^d\times A)\rightarrow\R$ be given by
\begin{align}\label{v_n,m^s,a}
v_{n,m}^{s_0}(t,\nu) \ &= \ \frac{1}{n}\sum_{i=1}^n\E\bigg[\int_t^{s_0} \tilde  f_{n,m}^i\big(r,\bar {\tilde  X}_r^{1,m,t,\bar\xi,\bar{\mathfrak a}_0},\ldots,\bar {\tilde  X}_r^{n,m,t,\bar\xi,\bar{\mathfrak a}_0},\tilde Y_r^{i,t,\bar{\mathfrak a}_0}\big)\,dr \\
&\quad \ + u_{n,m}\big(s_0,\bar {\tilde  X}_{s_0}^{1,m,t,\bar\xi,\bar{\mathfrak a}_0},\ldots,\bar {\tilde  X}_{s_0}^{n,m,t,\bar\xi,\bar{\mathfrak a}_0}\big)\bigg], \notag
\end{align}
for every $(t,\nu)\in[\underline t_0,s_0]\times\Pc_2(\R^d\times A)$, $\bar\xi=(\xi_1,\ldots,\xi_n)\in L^2(\Omega,\Fc_t,\P;\R^d)$ and $\bar{\mathfrak a}_0=(\mathfrak a_0^1,\ldots,\mathfrak a_0^n)$, with $\mathfrak a_0^i\in\Mc_t$, such that $\P_{(\bar\xi,\bar{\mathfrak a}_0)}=\nu\otimes\cdots\otimes\nu$. Moreover, $\tilde Y_r^{i,t,\bar{\mathfrak a}_0}=\mathfrak a_0^i$ for $r\in[t,T]$ and $\bar {\tilde  X}^{i,m,t,\bar\xi,\bar{\mathfrak a}_0}$ solves equation \eqref{SDE_bar_X} with $\eps=0$, $\bar\alpha_r^i=\tilde Y_r^{i,t,\bar{\mathfrak a}_0}$ for $r\in[t,T]$, $b$ replaced by $\tilde b_{n,m}^i$. Similarly, $u_{n,m}(s_0,\cdot)\colon\R^{dn}\rightarrow\R$ is given by
\[
u_{n,m}(s_0,\bar x) \ = \ m^{nd}\int_{\R^{dn}}u_2\bigg(s_0,{1\over n}\sum_{j=1}^n \delta_{x_j-y_j}\bigg)\prod_{j=1}^{n}\Phi(my_j)dy_j,
\]
for all $\bar x=(x_1,\ldots,x_n)\in\R^{dn}$, with $\Phi$ as in Section \ref{SubS:CooperativeGame}.\\
Then, for every $n,m\in\N$, there exists $\bar v_{n,m}^{s_0}\colon[\underline t_0,s_0]\times(\R^d\times A)^n\rightarrow\R$ such that
\[
v_{n,m}^{s_0}(t,\nu) \ = \ \int_{\R^{dn}}\bar v_{n,m}^{s_0}(t,x_1,\ldots,x_n,a_1,\ldots,a_n)\,\nu(dx_1,da_1)\cdots\nu(dx_n,da_n),
\]
for every $(t,\nu)\in[\underline t_0,s_0]\times\Pc_2(\R^d\times A)$, and the following holds.
\begin{enumerate}[\upshape 1)]
\item $\bar v_{n,m}^{s_0}\in C^{1,2}([\underline t_0,s_0]\times(\R^d\times A)^n)$ and $v_{n,m}^{s_0}\in C^{1,2}([\underline t_0,s_0]\times\Pc_2(\R^d\times A))$.
\item For all $(t,\bar x,\bar a)\in[\underline t_0,s_0]\times(\R^d\times A)^n$, with $\bar x=(x_1,\ldots,x_n)$, $\bar a=(a_1,\ldots,a_n)$ and $x_1,\ldots,x_n\in\R^d$, $a_1,\ldots,a_n\in A$, it holds that
\[
\big|\partial_{x_i}\bar v_{n,m}^{s_0}(t,\bar x,\bar a)\big| \ \leq \ \frac{C_K}{n},
\]
for every $i=1,\ldots,n$, for some constant $C_K\geq0$, possibly depending on $K$, but independent of $n,m$, where $K$ is as in Assumption \ref{AssA}.
\item $v_{n,m}^{s_0}$ solves the following equation:
\[
\hspace{-5mm}\begin{cases}
\vspace{2mm}
\displaystyle\partial_t v_{n,m}^{s_0}(t,\nu) + \bar\E\sum_{i=1}^n\bigg\{\frac{1}{n} \tilde  f_{n,m}^i(t,\xi_1,\ldots,\xi_n,\mathfrak a_0^i) + \langle \tilde b_{n,m}^i(t,\xi_1,\ldots,\xi_n,\mathfrak a_0^i),\partial_{x_i}\bar v_{n,m}^{s_0}(t,\bar\xi,\bar{\mathfrak a}_0)\rangle \\
\displaystyle\vspace{2mm}+\,\frac{1}{2}\textup{tr}\Big[(\sigma\sigma\trans)(t,\xi_i,\mathfrak a_0^i)\partial_{x_ix_i}^2\bar v_{n,m}^{s_0}(t,\bar\xi,\bar{\mathfrak a}_0)\Big]\bigg\} = 0, &\hspace{-5.7cm}(t,\nu)\in[\underline t_0,s_0)\times \Pc_2(\R^d\times A), \\
\displaystyle v_{n,m}^{s_0}(s_0,\nu) = \bar\E\big[u_{n,m}(s_0,\bar\xi)\big], &\hspace{-5.05cm}\nu\in\Pc_2(\R^d\times A),
\end{cases}
\]
for any $\bar\xi=(\xi_1,\ldots,\xi_n)\in L^2(\Omega,\Fc_t,\P;\R^{dn})$ and $\bar{\mathfrak a}_0=(\mathfrak a_0^1,\ldots,\mathfrak a_0^n)$, with $\mathfrak a_0^i\in\Mc_t$, such that $\bar\P_{(\bar\xi,\bar{\mathfrak a}_0)}=\nu\otimes\cdots\otimes\nu$.
\item If there exists $q>2$ such that $\nu\in\Pc_q(\R^d)$, then
\[
\lim_{n\rightarrow+\infty}\lim_{m\rightarrow+\infty} v_{n,m}^{s_0}(t,\nu) \ = \ v^{s_0}(t,\nu).
\]
\end{enumerate}
\end{Theorem}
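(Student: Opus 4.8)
The strategy is to reproduce, mutatis mutandis, the proof of Theorem~\ref{T:SmoothApprox}, exploiting the key simplification that the $n$-particle problem underlying $v_{n,m}^{s_0}$ is \emph{uncontrolled} --- the control $\mathfrak a_0$ being frozen --- so that the relevant finite-dimensional equation is \emph{linear} rather than of Bellman type. Accordingly, I would first define, for deterministic data $(\bar x,\bar a)=(x_1,\dots,x_n,a_1,\dots,a_n)\in(\R^d\times A)^n$,
\[
\bar v_{n,m}^{s_0}(t,\bar x,\bar a) \ := \ \frac1n\sum_{i=1}^n\bar\E\Big[\int_t^{s_0}\tilde f_{n,m}^i\big(r,\bar{\tilde X}_r^{t,\bar x,\bar a},a_i\big)\,dr\Big] + \bar\E\big[u_{n,m}\big(s_0,\bar{\tilde X}_{s_0}^{t,\bar x,\bar a}\big)\big],
\]
where $\bar{\tilde X}^{t,\bar x,\bar a}$ solves \eqref{SDE_bar_X} with $\eps=0$, with $b$ replaced by $(\tilde b_{n,m}^i)_i$, and with the control processes frozen at $\bar a$.

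The first main point is the regularity of $\bar v_{n,m}^{s_0}$ and the linear PDE it solves. For fixed $m$, the coefficients $\tilde b_{n,m}^i(t,\cdot,\cdot)$, $\tilde f_{n,m}^i(t,\cdot,\cdot)$ and the terminal datum $u_{n,m}(s_0,\cdot)$ are $C^\infty$ in the space variable $\bar x$ --- the convolution in the $\bar y$-variables makes them smooth in $\bar x$ even though $b$, $f$, $u_2$ are merely Lipschitz (resp.\ continuous) in the measure argument --- $\sigma(t,\cdot,a)\in C^{1,2}$ with bounded derivatives by Assumption~\ref{AssD}, and all coefficients are (H\"older) continuous in $t$. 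Hence the stochastic flow $\bar x\mapsto\bar{\tilde X}^{t,\bar x,\bar a}_r$ is twice mean-square differentiable in $\bar x$, with derivative processes solving the formally differentiated linear SDEs; differentiating under the expectation yields $\bar v_{n,m}^{s_0}\in C^{1,2}$ in $(t,\bar x)$, jointly continuous in $\bar a$, and the Markov property together with It\^o's formula shows it is the classical solution of
\[
\partial_t\bar v_{n,m}^{s_0} + \sum_{i=1}^n\Big\{\tfrac1n\tilde f_{n,m}^i(t,\bar x,a_i) + \langle\tilde b_{n,m}^i(t,\bar x,a_i),\partial_{x_i}\bar v_{n,m}^{s_0}\rangle + \tfrac12\textup{tr}\big[(\sigma\sigma\trans)(t,x_i,a_i)\partial_{x_ix_i}^2\bar v_{n,m}^{s_0}\big]\Big\} \ = \ 0,
\]
with $\bar v_{n,m}^{s_0}(s_0,\bar x,\bar a)=u_{n,m}(s_0,\bar x)$. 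In contrast with Theorem~\ref{T:SmoothApprox}, no non-degeneracy of $\sigma\sigma\trans$ is needed since this equation is linear (no supremum); Assumption~\ref{AssD} enters only through the $C^{1,2}$-regularity of $\sigma$.

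Next I would derive items~1)--3) for $v_{n,m}^{s_0}$. The identity $v_{n,m}^{s_0}(t,\nu)=\int\bar v_{n,m}^{s_0}(t,\bar x,\bar a)\,\nu(dx_1,da_1)\cdots\nu(dx_n,da_n)$, i.e.\ $v_{n,m}^{s_0}(t,\nu)=\bar\E[\bar v_{n,m}^{s_0}(t,\bar\xi,\bar{\mathfrak a}_0)]$ with $\bar\P_{(\bar\xi,\bar{\mathfrak a}_0)}=\nu\otimes\cdots\otimes\nu$, is obtained as in Step~II of the proof of Theorem~\ref{T:SmoothApprox}, in fact more easily, by conditioning on $\bar\Fc_t$, using independence of the Brownian increments after $t$ from $(\bar\xi,\bar{\mathfrak a}_0)$ and pathwise uniqueness of \eqref{SDE_bar_X} (no measurable selection of controls being needed). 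The bound $|\partial_{x_i}\bar v_{n,m}^{s_0}|\le C_K/n$ follows, exactly as in Step~I of that proof, from the estimate $|\bar v_{n,m}^{s_0}(t,\bar x,\bar a)-\bar v_{n,m}^{s_0}(t,\bar z,\bar a)|\le\frac{C_K}{n}|x_k-z_k|$ when $\bar x,\bar z$ differ only in their $k$-th block: by the uniform-in-$m$ Lipschitz property \eqref{eq:Lipbfg} of $\tilde b_{n,m}^i,\tilde f_{n,m}^i$ (with constant $K(\mathbf 1_{\{i=k\}}+\tfrac1n)$ when only block $k$ moves), the Lipschitz property of $\sigma$ in $x$, the Lipschitz property of $u_{n,m}(s_0,\cdot)$ and a Gr\"onwall estimate on the coupled particle system --- which keeps the $k$-th error component $O(|x_k-z_k|)$ and the others $O(|x_k-z_k|/n)$ --- one gains an overall factor $1/n$ (here $C_K$ may also depend on the Lipschitz constant of $u_2(s_0,\cdot)$, which is no loss of generality by the reduction in the proof of Theorem~\ref{T:Comparison}). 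Then $v_{n,m}^{s_0}\in C^{1,2}([\underline t_0,s_0]\times\Pc_2(\R^d\times A))$ follows from the chain-rule identities for $\partial_\mu$, $\partial_x\partial_\mu$ of maps of the form $\int\bar v\,d\nu^{\otimes n}$ (which here involve only the $\bar x$-derivatives $\partial_{x_i}\bar v_{n,m}^{s_0}$, $\partial_{x_ix_i}^2\bar v_{n,m}^{s_0}$), exactly as in Step~III of the proof of Theorem~\ref{T:SmoothApprox}, and item~3) is then immediate: evaluate the linear equation for $\bar v_{n,m}^{s_0}$ at the random point $(\bar\xi,\bar{\mathfrak a}_0)$, take $\bar\E$, and use $\partial_t v_{n,m}^{s_0}(t,\nu)=\bar\E[\partial_t\bar v_{n,m}^{s_0}(t,\bar\xi,\bar{\mathfrak a}_0)]$.

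Finally, for item~4) I would first send $m\to\infty$ at fixed $n$: by Lemma~\ref{lm:conv} and the analogous statement for the control-mollified coefficients, $\tilde b_{n,m}^i(t,\bar x,a)\to b(t,x_i,\widehat\mu^{n,\bar x},a)$ and $\tilde f_{n,m}^i(t,\bar x,a)\to f(t,x_i,\widehat\mu^{n,\bar x},a)$, while $u_{n,m}(s_0,\bar x)\to u_2(s_0,\widehat\mu^{n,\bar x})$ by continuity of $u_2(s_0,\cdot)$; since the coefficients are uniformly bounded and uniformly Lipschitz, a Gr\"onwall stability estimate gives $L^2$-convergence of the particle systems, whence $v_{n,m}^{s_0}(t,\nu)\to v_n^{s_0}(t,\nu):=\frac1n\sum_i\bar\E[\int_t^{s_0}f(r,\bar X_r^i,\widehat\mu_r^n,\mathfrak a_0^i)\,dr]+\bar\E[u_2(s_0,\widehat\mu_{s_0}^n)]$, with $\bar X^i$ solving \eqref{SDE_bar_X} with $\eps=0$ and $\bar\alpha^i=\mathfrak a_0^i$. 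Then I would send $n\to\infty$: the pairs $(\bar\xi_i,\mathfrak a_0^i)$ being i.i.d.\ of law $\nu$ and the $\bar B^i$ independent, classical McKean--Vlasov propagation of chaos (equivalently \cite[Theorem~2.12]{Lacker} in the present uncontrolled case, as in the proof of Theorem~\ref{T:PropagChaos}) gives $\sup_i\bar\E[\sup_r|\bar X_r^i-X_r^i|^2]\to0$ and $\bar\E[\Wc_2(\widehat\mu_r^n,\P_{X_r})^2]\to0$, $X^i$ being i.i.d.\ copies of the solution of \eqref{MKVSDE_X,Y} with $Y\equiv\mathfrak a_0$; the Lipschitz property and boundedness of $f$ together with the continuity and boundedness of $u_2(s_0,\cdot)$ then give $v_n^{s_0}(t,\nu)\to\E[\int_t^{s_0}f(r,X_r,\P_{X_r},\mathfrak a_0)\,dr]+u_2(s_0,\P_{X_{s_0}})=v^{s_0}(t,\nu)$, the hypothesis $\nu\in\Pc_q$ with $q>2$ serving to control the empirical-measure errors. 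The step I expect to be most delicate is the regularity claim for $\bar v_{n,m}^{s_0}$: establishing $C^{1,2}$-regularity for the possibly \emph{degenerate} linear equation --- no vanishing viscosity being available here, unlike in Theorem~\ref{T:SmoothApprox} --- forces reliance on the smoothness of the stochastic flow, hence on the smoothness in $\bar x$ of the mollified coefficients and on Assumption~\ref{AssD} for $\sigma$, while being careful about the (merely continuous) dependence on the control variables $\bar a$.
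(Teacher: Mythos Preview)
Your proposal is correct and follows essentially the same route as the paper: reduce to the argument of Theorem~\ref{T:SmoothApprox} after noting that the frozen-control problem is linear (so no $\eps>0$ is needed), and deduce item~4) by first passing $m\to\infty$ and then invoking the propagation of chaos of \cite{Lacker} as in Theorem~\ref{T:PropagChaos}. The only cosmetic difference is that the paper obtains the $C^{1,2}$-regularity of $\bar v_{n,m}^{s_0}$ by citing linear (possibly degenerate) parabolic theory \cite[Theorem~6.1, Chapter~5]{friedman75vol1}, whereas you go through differentiability of the stochastic flow; note also that your closing concern about ``merely continuous'' dependence on $\bar a$ is unfounded here, since $\tilde b_{n,m}^i,\tilde f_{n,m}^i$ are mollified in $a$ via $\zeta_p$ precisely to make them smooth in the control variable.
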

\begin{proof}[\textbf{Proof.}]
Items 1)-2)-3) follow from the same arguments as in Theorem \ref{T:SmoothApprox}, taking into account that here we are in a ``linear'' context, while Theorem \ref{T:SmoothApprox} deals with the ``fully non-linear'' case. Since we are in the linear case, the regularity results hold even if $\eps=0$ (that's why here we do not need this extra parameter), as it can be deduced for instance from \cite[Theorem 6.1, Chapter 5]{friedman75vol1}. Finally, item 4) follows from the propagation of chaos result \cite[Theorem 2.12]{Lacker} proceeding as in the proof of Theorem \ref{T:PropagChaos} \textcolor{black}{and noting that, in the present context, Assumption (B) in \cite{Lacker} can be neglected (that is Lipschitz continuity of the coefficients $b$ and $\sigma$ with respect to the extra state variable $a$). As a matter of fact, Assumption (B) in \cite{Lacker} is imposed to have uniqueness of the underlying McKean-Vlasov stochastic differential equations, which in our case correspond to system \eqref{MKVSDE_X,Y} and uniqueness clearly holds under our assumptions, without imposing in addition that $b$ and $\sigma$ are Lipschitz continuous with respect to $a$.}
\end{proof}

\paragraph{Acknowledgments.} The authors are very grateful to Pierre Cardaliaguet, who found a gap in the first version of the paper.

\small

\bibliographystyle{plain}
\bibliography{references}

\begin{thebibliography}{10}

\bibitem{AF14}
L.~Ambrosio and J.~Feng.
\newblock On a class of first order {H}amilton-{J}acobi equations in metric
  spaces.
\newblock {\em J. Differential Equations}, 256(7):2194--2245, 2014.

\bibitem{AG08}
L.~Ambrosio and W.~Gangbo.
\newblock Hamiltonian {ODE}s in the {W}asserstein space of probability
  measures.
\newblock {\em Comm. Pure Appl. Math.}, 61(1):18--53, 2008.

\bibitem{AGS08}
L.~Ambrosio, N.~Gigli, and G.~Savar\'{e}.
\newblock {\em Gradient flows in metric spaces and in the space of probability
  measures}.
\newblock Lectures in Mathematics ETH Z\"{u}rich. Birkh\"{a}user Verlag, Basel,
  second edition, 2008.

\bibitem{BCP18}
E.~Bayraktar, A.~Cosso, and H.~Pham.
\newblock Randomized dynamic programming principle and {F}eynman-{K}ac
  representation for optimal control of {M}c{K}ean-{V}lasov dynamics.
\newblock {\em Trans. Amer. Math. Soc.}, 370(3):2115--2160, 2018.

\bibitem{benetal13}
A.~Bensoussan, J.~Frehse, and P.~Yam.
\newblock {\em Mean field games and mean field type control theory}.
\newblock Springer, 2013.

\bibitem{benetal15}
A.~Bensoussan, J.~Frehse, and P.~Yam.
\newblock The master equation in mean field theory.
\newblock {\em Journal de Math{\'e}matiques Pures et Appliqu{\'e}es},
  103(6):1141--1474, 2015.

\bibitem{BL19}
S.~Bobkov and M.~Ledoux.
\newblock One-dimensional empirical measures, order statistics, and
  {K}antorovich transport distances.
\newblock {\em Mem. Amer. Math. Soc.}, 261(1259):v+126, 2019.

\bibitem{BP87}
J.~M. Borwein and D.~Preiss.
\newblock A smooth variational principle with applications to
  subdifferentiability and to differentiability of convex functions.
\newblock {\em Trans. Amer. Math. Soc.}, 303(2):517--527, 1987.

\bibitem{BZ05}
J.~M. Borwein and Q.~J. Zhu.
\newblock {\em Techniques of variational analysis}, volume~20 of {\em CMS Books
  in Mathematics/Ouvrages de Math\'{e}matiques de la SMC}.
\newblock Springer-Verlag, New York, 2005.

\bibitem{bucetal17}
R.~Buckdahn, J.~Li, S.~Peng, and C.~Rainer.
\newblock Mean-field stochastic differential equations and associated {PDE}s.
\newblock {\em Annals of Probability}, 45(2):824--878, 2017.

\bibitem{buretal20}
M.~Burzoni, V.~Ignazio, M.~Reppen, and H.M. Soner.
\newblock Viscosity solutions for controlled {McKean-Vlasov} jump diffusions.
\newblock {\em SIAM J. Control Optim.}, 58(3):1676--1699, 2020.

\bibitem{carda12}
P.~Cardaliaguet.
\newblock {N}otes on {M}ean {F}ield {G}ames (from {P}.-{L}. {L}ions' lectures
  at {C}oll\`ege de {F}rance).
\newblock https://www.ceremade.dauphine.fr/cardaliaguet/MFG20130420.pdf, 2012.

\bibitem{caretal20}
P.~Cardaliaguet, F.~Delarue, J.M. Lasry, and P.-L. Lions.
\newblock {\em The master equation and the convergence problem in mean field
  games}, volume 201 of {\em Annals of Mathematics Studies}.
\newblock Princeton University Press, Princeton, NJ, 2019.

\bibitem{CD18_I}
R.~Carmona and F.~Delarue.
\newblock {\em Probabilistic theory of mean field games with applications.
  {I}}, volume~83 of {\em Probability Theory and Stochastic Modelling}.
\newblock Springer, Cham, 2018.
\newblock Mean field FBSDEs, control, and games.

\bibitem{CFGRT}
A.~Cosso, S.~Federico, F.~Gozzi, M.~Rosestolato, and N.~Touzi.
\newblock Path-dependent equations and viscosity solutions in infinite
  dimension.
\newblock {\em Ann. Probab.}, 46(1):126--174, 2018.

\bibitem{CKGPR20}
A.~Cosso, I.~Kharroubi, F.~Gozzi, H.~Pham, and M.~Rosestolato.
\newblock {Optimal control of path-dependent McKean-Vlasov SDEs in infinite
  dimension}.
\newblock {\em Preprint ar{X}iv:2012.14772}, 2020.

\bibitem{CossoPham19}
A.~Cosso and H.~Pham.
\newblock Zero-sum stochastic differential games of generalized
  {M}c{K}ean-{V}lasov type.
\newblock {\em J. Math. Pures Appl. (9)}, 129:180--212, 2019.

\bibitem{CIL92}
M.~G. Crandall, H.~Ishii, and P.-L. Lions.
\newblock User's guide to viscosity solutions of second order partial
  differential equations.
\newblock {\em Bull. Amer. Math. Soc. (N.S.)}, 27(1):1--67, 1992.

\bibitem{DSS13}
S.~Dereich, M.~Scheutzow, and R.~Schottstedt.
\newblock Constructive quantization: approximation by empirical measures.
\newblock {\em Ann. Inst. Henri Poincar\'{e} Probab. Stat.}, 49(4):1183--1203,
  2013.

\bibitem{DGZZ}
B.~Djehiche, F.~Gozzi, G.~Zanco, and M.~Zanella.
\newblock Optimal portfolio choice with path dependent benchmarked labor
  income: a mean field model.
\newblock {\em Stochastic Process. Appl.}, 145:48--85, 2022.

\bibitem{FabbriGozziSwiech}
G.~Fabbri, F.~Gozzi, and A.~Swiech.
\newblock {\em Stochastic optimal control in infinite dimension: dynamic
  programming and HJB equations, with a contribution by M. Fuhrman and G.
  Tessitore}, volume~82 of {\em Probability Theory and Stochastic Modelling}.
\newblock Springer, Cham, 2017.

\bibitem{FG15}
N.~Fournier and A.~Guillin.
\newblock On the rate of convergence in {W}asserstein distance of the empirical
  measure.
\newblock {\em Probab. Theory Related Fields}, 162(3-4):707--738, 2015.

\bibitem{friedman75vol1}
A.~Friedman.
\newblock {\em Stochastic differential equations and applications. {V}ol. 1}.
\newblock Academic Press, New York, 1975.
\newblock Probability and Mathematical Statistics, Vol. 28.

\bibitem{GMS21}
W.~Gangbo, S.~Mayorga, and A.~Swi{e}ch.
\newblock Finite dimensional approximations of {H}amilton-{J}acobi-{B}ellman
  equations in space of probability measures.
\newblock {\em SIAM J. Math. Anal.}, 53(2):1320--1356, 2021.

\bibitem{ganngutud}
W.~Gangbo, T.~Nguyen, and A.~Tudorascu.
\newblock {Hamilton-Jacobi} equations in the wasserstein space.
\newblock {\em Methods Appl. Anal.}, 2:155--183, 2008.

\bibitem{GS14}
W.~Gangbo and A.~Swiech.
\newblock Optimal transport and large number of particles.
\newblock {\em Discrete Contin. Dyn. Syst.}, 34(4):1397--1441, 2014.

\bibitem{GS15}
W.~Gangbo and A.~Swiech.
\newblock Metric viscosity solutions of {H}amilton-{J}acobi equations depending
  on local slopes.
\newblock {\em Calc. Var. Partial Differential Equations}, 54(1):1183--1218,
  2015.

\bibitem{GT19}
W.~Gangbo and A.~Tudorascu.
\newblock On differentiability in the {W}asserstein space and well-posedness
  for {H}amilton-{J}acobi equations.
\newblock {\em J. Math. Pures Appl. (9)}, 125:119--174, 2019.

\bibitem{huangetal}
M.~Huang, P.E. Caines, and R.~Malham{\'e}.
\newblock Large population stochastic dynamic games: closed-loop
  {McKean-Vlasov} systems and the {Nash} certainty equivalence principle.
\newblock {\em Communications in Information and Systems}, 6(3):221--252, 2006.

\bibitem{Kallenberg}
O.~Kallenberg.
\newblock {\em Foundations of modern probability}.
\newblock Probability and its Applications (New York). Springer-Verlag, New
  York, second edition, 2002.

\bibitem{Krylov80}
N.~V. Krylov.
\newblock {\em Controlled diffusion processes}, volume~14 of {\em Stochastic
  Modelling and Applied Probability}.
\newblock Springer-Verlag, Berlin, 2009.
\newblock Translated from the 1977 Russian original by A. B. Aries, Reprint of
  the 1980 edition.

\bibitem{Lacker}
D.~Lacker.
\newblock Limit theory for controlled {M}c{K}ean-{V}lasov dynamics.
\newblock {\em SIAM J. Control Optim.}, 55(3):1641--1672, 2017.

\bibitem{LL1}
J.-M. Lasry and P.-L. Lions.
\newblock Jeux \`a champ moyen. {I}. {L}e cas stationnaire.
\newblock {\em C. R. Math. Acad. Sci. Paris}, 343(9):619--625, 2006.

\bibitem{LL2}
J.-M. Lasry and P.-L. Lions.
\newblock Jeux \`a champ moyen. {II}. {H}orizon fini et contr\^{o}le optimal.
\newblock {\em C. R. Math. Acad. Sci. Paris}, 343(10):679--684, 2006.

\bibitem{laslio}
J.M. Lasry and P.L. Lions.
\newblock Mean field games.
\newblock {\em Japanese Journal of Mathematics}, 2(1):229--260, 2007.

\bibitem{Lieberman}
G.~M. Lieberman.
\newblock {\em Second order parabolic differential equations}.
\newblock World Scientific Publishing Co., Inc., River Edge, NJ, 1996.

\bibitem{L83b}
P.-L. Lions.
\newblock Optimal control of diffusion processes and
  {H}amilton-{J}acobi-{B}ellman equations. {II}. {V}iscosity solutions and
  uniqueness.
\newblock {\em Comm. Partial Differential Equations}, 8(11):1229--1276, 1983.

\bibitem{LionsVideo}
P.-L. Lions.
\newblock {\em Th\`eorie des jeux de champ moyen et applications}, 2006-2012,
  http://www. college-de-france.fr/default/EN/all/equder/audiovideo.jsp.

\bibitem{lio89}
P.L. Lions.
\newblock Viscosity solutions of fully nonlinear second-order equations and
  optimal stochastic control in infinite dimensions. iii uniqueness of
  viscosity solutions for general second-order equations.
\newblock {\em Journal of Functional Analysis}, 86:1--18, 1989.

\bibitem{NGK21}
S.~Nietert, Z.~Goldfeld, and K.~Kato.
\newblock {Smooth $p$-Wasserstein Distance: Structure, Empirical Approximation,
  and Statistical Applications}.
\newblock {\em Preprint arXiv:2101.04039v3}, 2021.

\bibitem{phawei17}
H.~Pham and X.~Wei.
\newblock Dynamic programming for optimal control of stochastic {McKean-Vlasov}
  dynamics.
\newblock {\em SIAM J. Control Optim.}, 55(2):1069--1101, 2017.

\bibitem{PW18}
H.~Pham and X.~Wei.
\newblock Bellman equation and viscosity solutions for mean-field stochastic
  control problem.
\newblock {\em ESAIM Control Optim. Calc. Var.}, 24(1):437--461, 2018.

\bibitem{RenRosest}
Z.~Ren and M.~Rosestolato.
\newblock Viscosity solutions of path-dependent {PDE}s with randomized time.
\newblock {\em SIAM J. Math. Anal.}, 52(2):1943--1979, 2020.

\bibitem{RosestolatoSwiech17}
M.~Rosestolato and A.~Swiech.
\newblock Partial regularity of viscosity solutions for a class of kolmogorov
  equations arising from mathematical finance.
\newblock {\em Journal of Differential Equations}, 262(3):1897--1930, 2017.

\bibitem{Vi09}
C.~Villani.
\newblock {\em Optimal Transport Old and New}, volume 338 of {\em Grundlehren
  der mathematischen Wissenschaften}.
\newblock Springer Verlag, 2009.

\bibitem{WuZhang20}
C.~Wu and J.~Zhang.
\newblock Viscosity solutions to parabolic master equations and {McKean-Vlasov
  SDEs} with closed-loop controls.
\newblock {\em Annals of Applied Probability}, 30(2):936--986, 2020.

\end{thebibliography}

\end{document}